\numberwithin{equation}{section}
\numberwithin{figure}{section}
\theoremstyle{plain}
\newtheorem{thm}{\protect\theoremname}
\theoremstyle{plain}
\newtheorem{lem}[thm]{\protect\lemmaname}
\theoremstyle{plain}
\newtheorem{prop}[thm]{\protect\propositionname}
\theoremstyle{plain}
\newtheorem{conjecture}[thm]{\protect\conjecturename}
\theoremstyle{plain}
\newtheorem{question}[thm]{\protect\questionname}
\theoremstyle{definition}
\newtheorem{problem}[thm]{\protect\problemname}
\DeclareMathOperator{\Des}{Des}
\DeclareMathOperator{\Pk}{Pk}
\DeclareMathOperator{\pk}{pk}
\DeclareMathOperator{\Lpk}{Lpk}
\DeclareMathOperator{\lpk}{lpk}
\DeclareMathOperator{\Rpk}{Rpk}
\DeclareMathOperator{\rpk}{rpk}
\DeclareMathOperator{\Epk}{Epk}
\DeclareMathOperator{\epk}{epk}
\DeclareMathOperator{\Comp}{Comp}
\DeclareMathOperator{\des}{des}
\DeclareMathOperator{\st}{st}
\DeclareMathOperator{\cst}{cst}
\DeclareMathOperator{\std}{std}
\DeclareMathOperator{\Val}{Val}
\DeclareMathOperator{\val}{val}
\DeclareMathOperator{\maj}{maj}
\DeclareMathOperator{\QSym}{QSym}
\DeclareMathOperator{\cQSym}{cQSym}
\DeclareMathOperator{\Span}{span}
\DeclareMathOperator{\cyc}{cyc}
\let\originalleft\left
\let\originalright\right
\renewcommand{\left}{\mathopen{}\mathclose\bgroup\originalleft}
\renewcommand{\right}{\aftergroup\egroup\originalright}
\newcommand{\leqnomode}{\tagsleft@true\let\veqno\@@leqno}
\newcommand{\reqnomode}{\tagsleft@false\let\veqno\@@eqno}
\providecommand{\conjecturename}{Conjecture}
\providecommand{\lemmaname}{Lemma}
\providecommand{\problemname}{Problem}
\providecommand{\propositionname}{Proposition}
\providecommand{\questionname}{Question}
\providecommand{\theoremname}{Theorem}
\title{On kernels of descent statistics}
\author[1]{William L.\ Clark\thanks{\tt{s82wclar@uni-bonn.de}}}
\author[2]{Yan Zhuang\thanks{\tt{yazhuang@davidson.edu}}}
\affil[1]{Mathematical Institute, University of Bonn} 
\affil[2]{Department of Mathematics and Computer Science, Davidson College}
\begin{document}
\maketitle
\begin{abstract}
The kernel $\mathcal{K}^{\st}$ of a descent statistic $\st$, introduced by Grinberg, is a subspace of the algebra $\QSym$ of quasisymmetric functions defined in terms of $\st$-equivalent compositions, and is an ideal of $\QSym$ if and only if $\st$ is shuffle-compatible. This paper continues the study of kernels of descent statistics, with emphasis on the peak set $\Pk$ and the peak number $\pk$. The kernel $\mathcal{K}^{\Pk}$ in particular is precisely the kernel of the canonical projection from $\QSym$ to Stembridge's algebra of peak quasisymmetric functions, and is the orthogonal complement of Nyman's peak algebra. We prove necessary and sufficient conditions for obtaining spanning sets and linear bases for the kernel $\mathcal{K}^{\st}$ of any descent statistic $\st$ in terms of fundamental quasisymmetric functions, and give characterizations of $\mathcal{K}^{\Pk}$ and $\mathcal{K}^{\pk}$ in terms of the fundamental basis and the monomial basis of $\QSym$. Our results imply that the peak set and peak number statistics are $M$-binomial, confirming a conjecture of Grinberg.
\end{abstract}
\textbf{\small{}Keywords:}{\small{} quasisymmetric functions, compositions, permutation statistics, shuffle-compatibility, descents, peaks}
{\let\thefootnote\relax\footnotetext{YZ was partially supported by an AMS-Simons Travel Grant and NSF grant DMS-2316181.}}
{\let\thefootnote\relax\footnotetext{2020 \textit{Mathematics Subject Classification}. Primary 05E05; Secondary 05A05, 05C50, 15A99.}}

\section{Introduction}

\setcounter{thm}{-1}

This paper studies ideals of the ring of quasisymmetric functions associated with shuffle-compatible permutation statistics. We begin by giving the relevant definitions.

We call $\pi$ a \textit{permutation} of \textit{length} $n$ if it is a sequence of $n$ distinct positive integers, displayed as the word $\pi=\pi_{1}\pi_{2}\cdots\pi_{n}$. Let $\left|\pi\right|$ denote the length of a permutation $\pi$ and let $\mathfrak{P}_{n}$ denote the set of all permutations of length $n$. Note that $\mathfrak{P}_{n}$ contains the set $\mathfrak{S}_{n}$ of permutations of $[n]\coloneqq\{1,2,\dots,n\}$, as every permutation in $\mathfrak{S}_{n}$ can be written in one-line notation as a sequence of $n$ distinct positive integers, but $\mathfrak{P}_{n}$ and $\mathfrak{S}_{n}$ are not the same. For example, $83416$ is an element of $\mathfrak{P}_{5}$ but not of $\mathfrak{S}_{5}$.

Given a permutation $\pi\in\mathfrak{P}_{n}$, define the \textit{standardization} $\std\pi$ of $\pi$ to be the unique permutation in $\mathfrak{S}_{n}$ obtained by replacing the smallest letter of $\pi$ by 1, the second smallest by 2, and so on. For example, $\std 83416=52314$. A \textit{permutation statistic} is a function $\st$ defined on permutations such that $\st\pi=\st\sigma$ whenever $\std\pi=\std\sigma$.\footnote{The condition that $\std\pi=\std\sigma$ implies $\st\pi=\st\sigma$ is not actually used in this paper, but it plays a role in the theory of shuffle-compatible permutation statistics.} Note that every permutation statistic $\st$ defined on $\mathfrak{S}_{n}$ can be extended to $\mathfrak{P}_{n}$ by taking $\st\pi\coloneqq\st(\std\pi)$.

A classical example of a permutation statistic is the descent set, defined as follows. We say that $i\in[n-1]$ is a \textit{descent} of a permutation $\pi\in\mathfrak{P}_{n}$ if $\pi_{i}>\pi_{i+1}$, and the \textit{descent set}
\[
\Des\pi\coloneqq\{\,i\in[n-1]:\pi_{i}>\pi_{i+1}\,\}
\]
of $\pi$ is its set of descents. 

The information contained inside the descent set can also be encoded as an integer composition. First observe that every permutation can be uniquely decomposed into a sequence of maximal increasing consecutive subsequences called \textit{increasing runs}. The \textit{descent composition} of $\pi$, denoted $\Comp\pi$, is the composition whose parts are the lengths of the increasing runs of $\pi$ in the order that they appear. For example, the increasing runs of $\pi=379426$ are $379$, $4$, and $26$, so $\Comp\pi=(3,1,2)$. If $\Comp\pi=(j_{1},j_{2},\dots,j_{m})$, then the descent set of $\pi$ is given by 
\begin{equation}
\Des\pi=\{j_{1},j_{1}+j_{2},\dots,j_{1}+j_{2}+\cdots+j_{m-1}\}.\label{e-Des}
\end{equation}
Conversely, if $\Des\pi=\{i_{1}<i_{2}<\dots<i_{m}\}$, then 
\begin{equation}
\Comp\pi=(i_{1},i_{2}-i_{1},\dots,i_{m}-i_{m-1},n-i_{m})\label{e-Comp}
\end{equation}
where $n$ is the length of $\pi$. We shall use the notations $L\vDash n$ and $\left|L\right|=n$ to indicate that $L$ is a composition of $n$, and $\mathcal{C}$ for the set of all compositions. By convention, we allow the ``empty composition'' $\emptyset$ to be a composition of 0; this is the descent composition of the empty permutation, the sole element of $\mathfrak{P}_0$.

A permutation statistic $\st$ is called a \textit{descent statistic} if $\Comp\pi=\Comp\sigma$ implies $\st\pi=\st\sigma$\textemdash that is, if $\st$ depends only on the descent composition, or equivalently, on the descent set and the length. Whenever $\st$ is a descent statistic, we may write $\st L$ for the value of $\st$ on any permutation with descent composition $L$. Besides the descent set $\Des$, examples of descent statistics include the descent number $\des$, major index $\maj$, peak set $\Pk$, peak number $\pk$, exterior peak set $\Epk$, valley set $\Val$, and valley number $\val$; their definitions will be given later.

For a descent statistic $\st$, two compositions $J$ and $K$ are said to be $\st$-\textit{equivalent} if $\st J=\st K$ and $\left|J\right|=\left|K\right|$; when this is the case, we write $J\sim_{\st}K$. Then the \textit{kernel} of $\st$, denoted $\mathcal{K}^{\st}$, is the subspace
\[
\mathcal{K}^{\st}\coloneqq\Span\{\,F_{J}-F_{K}:J\sim_{\st}K\,\}
\]
of the $\mathbb{Q}$-algebra $\QSym$ of quasisymmetric functions, where $F_{L}$ refers to the \textit{fundamental quasisymmetric function}
\[
F_{L}\coloneqq\sum_{\substack{i_{1}\leq i_{2}\leq\cdots\leq i_{n}\\
i_{j}<i_{j+1}\text{ if }j\in\Des L
}
}x_{i_{1}}x_{i_{2}}\cdots x_{i_{n}}
\]
where $n=\left|L\right|$.

The kernel of a descent statistic was defined by Grinberg in \cite{Grinberg2018,Grinberg2018a},\footnote{The paper \cite{Grinberg2018a} is an extended version of \cite{Grinberg2018}, containing additional results and more detailed proofs. We will cite the published version \cite{Grinberg2018} unless referring to content which only appears in the extended version \cite{Grinberg2018a}.} which was a continuation of the work by Gessel and Zhuang on shuffle-compatible permutation statistics~\cite{Gessel2018}. Gessel and Zhuang defined the shuffle algebra $\mathcal{A}^{\st}$ of a shuffle-compatible permutation statistic (see Section \ref{ss-shuf} for definitions), and showed that whenever a descent statistic $\st$ is shuffle-compatible, its shuffle algebra is isomorphic to a quotient of $\QSym$. The kernel $\mathcal{K}^{\st}$ is an ideal of $\QSym$ if and only if $\st$ is shuffle-compatible, and is in fact the kernel of the canonical projection from $\QSym$ to $\mathcal{A}^{\st}$.

Using their framework, Gessel and Zhuang proved that a number of descent statistics are shuffle-compatible and gave explicit descriptions for their shuffle algebras, but left the shuffle-compatibility of the exterior peak set $\Epk$ as a conjecture. Grinberg proved the shuffle-compatibility of $\Epk$ and gave two characterizations of the ideal $\mathcal{K}^{\Epk}$: one in terms of the fundamental quasisymmetric functions and another in terms of the \textit{monomial quasisymmetric functions}
\[
M_{L}\coloneqq\sum_{i_{1}<i_{2}<\cdots<i_{m}}x_{i_{1}}^{j_{1}}x_{i_{2}}^{j_{2}}\cdots x_{i_{m}}^{j_{m}}
\]
where $L=(j_{1},j_{2},\dots,j_{m})$. Both characterizations have nice combinatorial descriptions involving the underlying compositions. We state these two results together in the following theorem.

\begin{thm}[{Grinberg \cite[Propositions 103 and 105]{Grinberg2018}}] \label{t-Epk}
Given compositions $J=(j_{1},j_{2},\dots,j_{m})$ and $K$, we write $J\rightarrow K$ if there exists $l\in\{2,3,\dots,m\}$ for which $j_{l}>2$ and 
\[
K=(j_{1},\dots,j_{l-1},1,j_{l}-1,j_{l+1},\dots,j_{m}),
\]
and we write $J\triangleright K$ if there exists $l\in\{2,3,\dots,m\}$ for which $j_{l}>2$ and 
\[
K=(j_{1},\dots,j_{l-1},2,j_{l}-2,j_{l+1},,\dots,j_{m}).
\]
Then the ideal $\mathcal{K}^{\Epk}$ is spanned \textup{(}as a $\mathbb{Q}$-vector space\textup{)} in the following ways\textup{:}
\[
\mathcal{K}^{\Epk}=\Span\{\,F_{J}-F_{K}:J\rightarrow K\,\}=\Span\{\,M_{J}+M_{K}:J\triangleright K\,\}.
\]
\end{thm}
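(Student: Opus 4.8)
The plan is to establish the two displayed identities one at a time. In each case I would prove the inclusion ``$\subseteq$'' by checking directly, from the combinatorial description of the exterior peak set, that the proposed spanning vectors lie in $\mathcal{K}^{\Epk}$, and then the reverse inclusion ``$\supseteq$'' by a rank count against the dimension of $\mathcal{K}^{\Epk}$ in each degree $n$. That dimension is the number of compositions of $n$ minus the number of $\Epk$-equivalence classes among them: indeed $F_{J}-F_{K}\in\mathcal{K}^{\st}$ exactly when $J$ and $K$ have equal size and $\st J=\st K$, so the linear map sending $F_{L}$ to the class of $L$, on the degree-$n$ part of $\QSym$, has kernel precisely $\mathcal{K}^{\st}_{n}$. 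The common combinatorial input is the formula for $\Epk$ on a composition $L=(j_{1},\dots,j_{m})\vDash n$ in terms of the partial sums $s_{i}=j_{1}+\cdots+j_{i}$: the interior peaks are the $s_{i}$ with $1\le i\le m-1$ and $j_{i}\ge 2$, position $1$ is an exterior peak iff $j_{1}=1$, and position $n$ is an exterior peak iff $j_{m}\ge 2$. Everything is graded, since each generator $F_{J}-F_{K}$ or $M_{J}+M_{K}$ is homogeneous.

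For the fundamental-basis identity, the inclusion ``$\subseteq$'' amounts to checking that $J\rightarrow K$ forces $J\sim_{\Epk}K$: replacing the part $j_{l}$ (with $l\ge 2$, $j_{l}>2$) by $(1,j_{l}-1)$ adjoins the single new element $s_{l-1}+1$ to the descent set, and since $s_{l-1}$ is already a descent of both compositions and $j_{l}-1\ge 2$, one checks that no position changes peak status. Thus $\Span\{F_{J}-F_{K}:J\rightarrow K\}\subseteq\mathcal{K}^{\Epk}$, and in degree $n$ this span has dimension equal to the number of compositions of $n$ minus the number of connected components of the graph on those compositions whose edges are the pairs $J\rightarrow K$. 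So equality with $\mathcal{K}^{\Epk}$ reduces to showing each $\Epk$-equivalence class is a single connected component. For this I would argue: (i) iterating $\rightarrow$ terminates --- it strictly increases the number of parts, which is at most $n$ --- at a \emph{reduced} composition, namely one all of whose parts after the first lie in $\{1,2\}$; and (ii) distinct reduced compositions of $n$ have distinct exterior peak sets. Granting (ii) and using that $\rightarrow$ preserves $\Epk$, any two $\Epk$-equivalent compositions reduce to reduced compositions with the same $\Epk$, hence to the \emph{same} one, and are therefore joined by a zig-zag of $\rightarrow$-moves.

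For the monomial-basis identity I would work with the standard expansions $F_{L}=\sum_{K}M_{K}$ over all $K$ refining $L$ and its inverse $M_{L}=\sum_{K}(-1)^{\ell(K)-\ell(L)}F_{K}$ over the same set, where $\ell$ is the number of parts. To see that $M_{J}+M_{K}\in\mathcal{K}^{\Epk}$ when $J\triangleright K$, I would expand both monomials in the fundamental basis and show the result telescopes into an integer combination of the generators $F_{J'}-F_{K'}$ with $J'\rightarrow K'$, which span $\mathcal{K}^{\Epk}$ by the previous paragraph; the bookkeeping can be set up as an induction on the size of the split part, exploiting that splitting $j_{l}\mapsto(2,j_{l}-2)$ and $j_{l}\mapsto(1,j_{l}-1)$ are two one-step refinements of the same part. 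For ``$\supseteq$'', assign to each \emph{non}-reduced composition $J\vDash n$ a fixed $\triangleright$-split $K(J)$; then the family $\{M_{J}+M_{K(J)}\}$ is linearly independent, because ordering compositions so that fewer parts means larger makes $M_{J}$ the unique leading term of $M_{J}+M_{K(J)}$ (as $\ell(K(J))=\ell(J)+1$), and these leading terms are distinct. The number of non-reduced compositions of $n$ is the number of compositions of $n$ minus the number of reduced ones, which by (ii) is the number of $\Epk$-classes; hence this independent family inside $\mathcal{K}^{\Epk}$ has size $\dim\mathcal{K}^{\Epk}_{n}$, so it is a basis and $\{M_{J}+M_{K}:J\triangleright K\}$ spans $\mathcal{K}^{\Epk}$.

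The crux --- used in the ``$\supseteq$'' half of both identities --- is statement (ii): that $\Epk$ is injective on reduced compositions. The work here is to identify exactly which subsets of $[n]$ arise as exterior peak sets (the nonempty ones containing no two consecutive integers) and to reconstruct the reduced composition from its peak set, carefully disentangling the contributions of position $1$, of the ``long'' runs, and of position $n$ in the formula above; an alternative is to prove surjectivity onto all such subsets directly and then invoke the counting identity, since every class already contains a reduced composition. The remaining ingredients --- the peak computations for ``$\subseteq$'', the rank counts, and the telescoping in the monomial case --- are routine once this structural fact is in place.
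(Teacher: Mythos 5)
The paper does not actually prove this theorem---it is quoted as Grinberg's Propositions 103 and 105 and used as background motivation. There is therefore no ``paper's own proof'' to compare against word for word. What the paper \emph{does} do is develop machinery (Theorem \ref{t-Fsst}, the canonical-form reductions in Section \ref{s-fun}, and the invertibly-triangular change of basis via Lemma \ref{l-invtri} in Section \ref{s-mon}) to prove exactly analogous statements for $\Pk$ and $\pk$. Your proposal reconstructs the same circle of ideas, both the paper's and Grinberg's, but phrased slightly differently, and I believe it would go through.

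The two places where your formulation diverges from the paper's are matters of packaging rather than substance. First, your dimension count in each degree $n$ is logically interchangeable with the graph criterion of Theorem \ref{t-Fsst}(a): the span of $\{F_J-F_K : J\to K\}$ in degree $n$ has codimension equal to the number of connected components of the arrow graph, while $\dim\mathcal{K}^{\Epk}_n$ is the number of compositions minus the number of $\Epk$-classes; since components always refine classes, equality of dimensions is the same as equality of components with classes, which is precisely what the paper's criterion demands. Your steps (i) (termination of $\to$ at a ``reduced'' composition, all parts after the first in $\{1,2\}$) and (ii) (injectivity of $\Epk$ on reduced compositions) together are the exact analogue of the paper's $J\to J_\alpha\to J_\beta$ normal-form argument in the proofs of Theorem \ref{t-F}. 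Second, your leading-term/triangularity argument for the $M$-span is an informal version of the invertibly-triangular expansion used in Propositions \ref{p-PkmtoM} and \ref{p-Pkftom}; the observation that $J\triangleright K$ forces $\ell(K)=\ell(J)+1$ and $\Des J\subsetneq\Des K$, so that $M_J$ is the unique ``highest'' term, is the same triangularity.

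You have correctly located the crux in (ii). That is where the real combinatorial content lies, and it is not free: one must show that from the set $\Epk L$ one can reconstruct $j_1$ (namely $j_1=1$ iff $1\in\Epk L$, and otherwise $j_1=\min\Epk L$ when $m\ge2$), then inductively recover whether each subsequent part is $1$ or $2$, and finally recover whether the last part is $1$ or $2$ from whether $n\in\Epk L$. As stated your proposal only gestures at this reconstruction; writing it out carefully (including the degenerate cases $m=1$, $n\le 1$, and the interaction between the ``first-part'' and ``last-part'' conditions when $m=1$) is where the work is. The same caveat applies to the telescoping step for $M_J+M_K\in\mathcal{K}^{\Epk}$: this is exactly Lemma \ref{l-MtoF}(c) applied with $k=s_{l-1}+2$, and it is worth noting that the hypotheses $k\notin C$, $k-1\notin C\cup\{0\}$ of that lemma are precisely what the constraint $l\ge 2$, $j_l>2$ buys you; so the telescoping is not ad hoc but has a clean closed form. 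None of this is a gap in the logic, only in the level of detail, which is appropriate for a proposal.
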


Grinberg suggests a systematic study of kernels of descent statistics. One direction of research concerns the $M$-binomial property: a descent statistic $\st$ is said to be $M$-\textit{binomial} if $\mathcal{K}^{\st}$ can be spanned by elements of the form $\lambda M_{J}+\mu M_{K}$ with $\lambda,\mu\in\mathbb{Q}$. For example, it follows from Theorem \ref{t-Epk} that $\Epk$ is $M$-binomial, and Grinberg gave a list of other descent statistics\textemdash including the peak set $\Pk$ and the peak number $\pk$\textemdash for which computational evidence suggests are $M$-binomial as well \cite[Question 107]{Grinberg2018}.

\subsection{Overview of results}

The purpose of the present work is to continue the study of kernels of descent statistics initiated by Grinberg, with emphasis on the kernels of the peak set and the peak number. Both $\Pk$ and $\pk$ are shuffle-compatible statistics, so ${\cal K}^{\Pk}$ and ${\cal K}^{\pk}$ are ideals of $\QSym$. We will give characterizations\textemdash analogous to the ones given by Grinberg for $\mathcal{K}^{\Epk}$\textemdash for both ideals, and the ones given in terms of the monomial quasisymmetric functions show that $\Pk$ and $\pk$ are indeed $M$-binomial. 

To obtain characterizations for the kernels in terms of the fundamental quasisymmetric functions, we prove necessary and sufficient conditions for a subset of $\{\,F_{J}-F_{K}:J\sim_{\st}K\,\}$ to be: (a) a spanning set of $\mathcal{K}^{\st}$, and (b) linearly independent. Note that every subset of $\{\,F_{J}-F_{K}:J\sim_{\st}K\,\}$ can be written as 
\[
\mathcal{F}_{S}^{\st}\coloneqq\{\,F_{J}-F_{K}:(J,K)\in S\,\}
\]
for some subset $S$ of $\{\,(J,K):J\sim_{\st}K\,\}$, and we associate to $S$ a directed graph $G_{S}$ with vertex set $\mathcal{C}$ and edge set $S$\textemdash i.e., there is an edge from $J$ to $K$ if and only if $(J,K)\in S$.

For us, a \textit{connected component} of a directed graph refers to a connected component of the underlying undirected graph, and a directed graph is called a \textit{forest} if its underlying undirected graph has no cycles.

\begin{thm} \label{t-Fsst}
Let $\st$ be a descent statistic and let $S\subseteq\{\,(J,K):J\sim_{\st}K\,\}$.
\begin{enumerate}
\item [\normalfont{(a)}] The kernel $\mathcal{K}^{\st}$ is spanned by $\mathcal{F}_{S}^{\st}$ if and only if the connected components of $G_{S}$ are precisely the $\st$-equivalence classes of $\mathcal{C}$\textemdash i.e., $J\sim_{\st}K$ if and only if $J$ and $K$ are in the same connected component of $G_{S}$.
\item [\normalfont{(b)}] The set $\mathcal{F}_{S}^{\st}$ is linearly independent if and only if $G_{S}$ is a forest.
\end{enumerate}
\end{thm}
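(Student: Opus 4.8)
The plan is to deduce both parts from a single structural fact: the family $\{F_L : L \in \mathcal{C}\}$ is a basis of $\QSym$, and a difference $F_J - F_K$ with $J \sim_{\st} K$ lies in the span of such differences only along an $\st$-equivalence class. More precisely, for each $\st$-equivalence class $C$, consider the subspace $V_C \coloneqq \Span\{F_L : L \in C\}$ and its ``trace-zero'' hyperplane $V_C^0 \coloneqq \Span\{F_J - F_K : J, K \in C\}$. Since distinct compositions may lie in the same $\st$-equivalence class only if they have the same size, and since $\mathcal{K}^{\st}$ is by definition generated by the $F_J - F_K$ with $J \sim_{\st} K$, linear independence of the fundamental basis gives $\mathcal{K}^{\st} = \bigoplus_C V_C^0$, a direct sum over all $\st$-equivalence classes $C$ (finite classes, since there are finitely many compositions of each size). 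Likewise $\mathcal{F}_S^{\st}$ decomposes as a disjoint union over the classes $C$ of the edge sets of $G_S$ restricted to $C$, and both the span and the linear (in)dependence of $\mathcal{F}_S^{\st}$ can be analyzed one class at a time.

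For part (a): fix a class $C$ with $|C| = t$, and identify $V_C^0$ with the standard ``sum-zero'' subspace of $\mathbb{Q}^t$ by sending $F_L \mapsto e_L$. Under this identification, $F_J - F_K \mapsto e_J - e_K$, which is exactly the signed incidence vector of the edge $(J,K)$ in $G_S|_C$. It is a classical fact that the edge vectors $\{e_J - e_K : (J,K) \in G_S|_C\}$ span the sum-zero subspace of $\mathbb{Q}^C$ if and only if the graph $G_S|_C$ is connected on the vertex set $C$. Summing over all classes, $\mathcal{F}_S^{\st}$ spans $\mathcal{K}^{\st} = \bigoplus_C V_C^0$ if and only if every $G_S|_C$ is connected; combined with the fact that $S \subseteq \{(J,K) : J \sim_{\st} K\}$ forces every edge of $G_S$ to stay within a single class, this says precisely that the connected components of $G_S$ are the $\st$-equivalence classes, which is the claim. (One should also note the degenerate classes: a singleton class contributes $V_C^0 = 0$ and no edges, consistent with both conditions; an infinite composition set cannot arise as a single $\st$-equivalence class since classes group equal-size compositions.)

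For part (b): again working class by class, $\mathcal{F}_S^{\st}$ is linearly independent if and only if for each class $C$ the edge vectors $\{e_J - e_K : (J,K) \in G_S|_C\}$ are linearly independent in $\mathbb{Q}^C$. This is the other classical fact from algebraic graph theory: the signed incidence vectors of a set of edges of a graph are linearly independent precisely when those edges form a forest (no cycle, since any cycle yields a nontrivial alternating-sign dependence among its edge vectors, and conversely the incidence matrix of a forest has full rank). Since $G_S$ has no edges between distinct classes, $G_S$ is a forest if and only if each $G_S|_C$ is a forest, giving the equivalence.

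The main obstacle — really the only place any care is needed — is the reduction establishing $\mathcal{K}^{\st} = \bigoplus_C V_C^0$ and the disjointness of $\mathcal{F}_S^{\st}$ across classes; once this is in hand, everything reduces to the two standard lemmas about signed incidence matrices (rank equals $|V| - (\text{number of components})$). I would state those two graph-theoretic facts as a short lemma (or cite them), prove the direct-sum decomposition of $\mathcal{K}^{\st}$ carefully using that $\{F_L\}$ is a basis and that $\sim_{\st}$ refines the partition of $\mathcal{C}$ by size, and then assemble parts (a) and (b) in a few lines each.
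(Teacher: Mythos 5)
Your proposal is correct and takes essentially the same approach as the paper: both reduce the claim to the standard facts that signed edge vectors $e_J - e_K$ of a graph span the sum-zero subspace of a block iff that block is connected, and are linearly independent iff the edges form a forest. The only packaging difference is that you cite these as classical incidence-matrix facts after an explicit direct-sum decomposition of $\mathcal{K}^{\st}$ over $\st$-equivalence classes, whereas the paper works with the (infinite) graph $G_S$ globally and re-proves the two facts from scratch in Lemmas \ref{l-ccequiv}--\ref{l-dg} and Theorems \ref{t-span}--\ref{t-li} (the spanning direction via telescoping along paths, the converse via an indicator functional on a connected component, and the forest criterion via a cycle giving a signed zero-sum relation); the localization to classes you make explicit is implicit there, since $S$ only joins equivalent compositions.
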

We will apply Theorem \ref{t-Fsst} to the peak set and peak number statistics. Given compositions $J=(j_{1},j_{2},\dots,j_{m})$ and $K$, we write: 
\begin{itemize}
\item $J\rightarrow_{1}K$ if there exists $l\in[m]$ for which $j_{l}>2$
and 
\[
K=(j_{1},\dots,j_{l-1},1,j_{l}-1,j_{l+1},\dots,j_{m});
\]
\item $J\rightarrow_{2}K$ if $j_{m}=2$ and 
\[
K=(j_{1},\dots,j_{m-1},1,1);
\]
\item $J\rightarrow_{3}K$ if $j_{i}\leq2$ for all $i\in[m]$, $j_{m}=1$,
$j_{l}=1$ and $j_{l+1}=2$ for some $l\in[m-2]$, and 
\[
K=(j_{1},\dots,j_{l-1},j_{l+1},j_{l},j_{l+2},\dots,j_{m}).
\]
\end{itemize}

See Figure \ref{f-arrows} for all relations $\rightarrow_{1}$, $\rightarrow_{2}$, and $\rightarrow_{3}$ among compositions of at most 5. Ignoring edge labels, this is also the subgraph of $G_{S}$ for $S=\{\,(J,K):J\rightarrow_{1}K,\,J\rightarrow_{2}K,\text{ or }J\rightarrow_{3}K\,\}$ induced by the compositions of at most 5. 

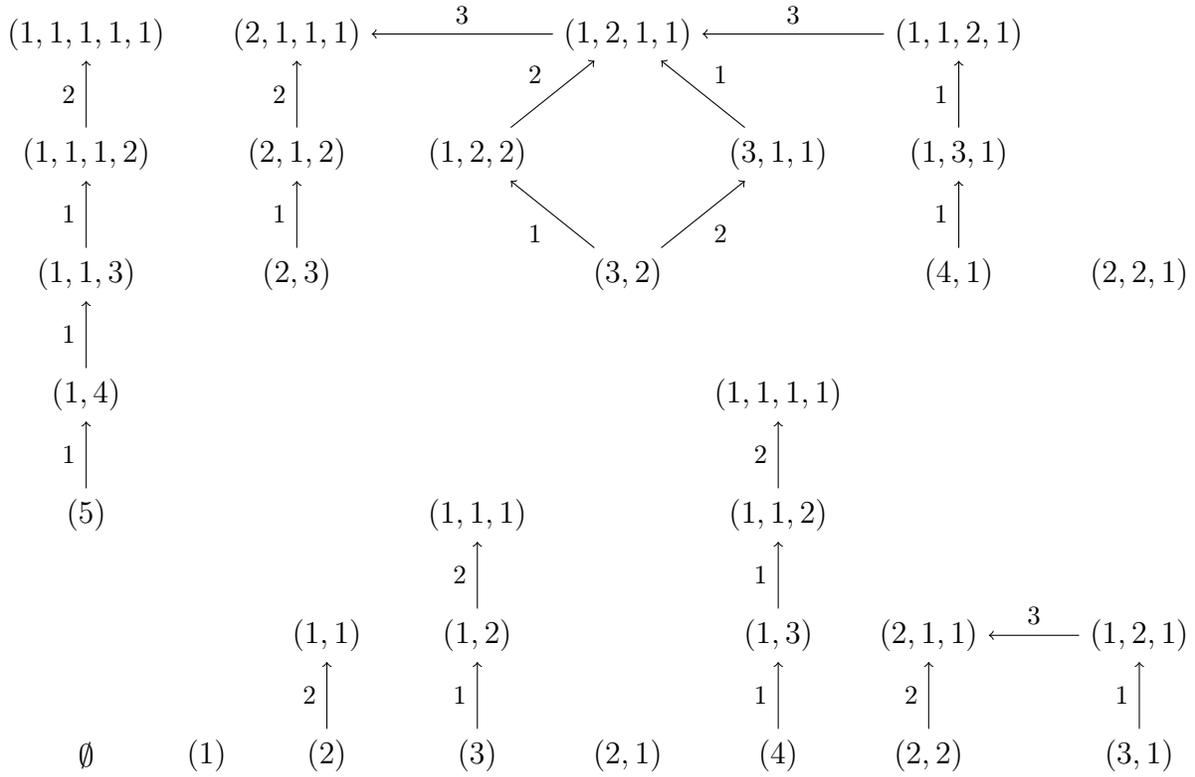
\begin{figure}
\noindent \begin{centering}
\begin{center}
\begin{tikzpicture}[scale=0.4,auto,every edge quotes/.style = {font=\footnotesize}]

\node (1) at (0,0) {$\emptyset$};

\node (2) at (4,0) {$(1)$};

\node (3) at (8,0) {$(2)$};
\node (4) at (8,4) {$(1,1)$};
\draw[-to] (3) edge["2"] (4);

\node (5) at (13,0) {$(3)$};
\node (6) at (13,4) {$(1,2)$};
\node (7) at (13,8) {$(1,1,1)$};
\draw[-to] (5) edge["1"] (6);
\draw[-to] (6) edge["2"] (7);

\node (7) at (18,0) {$(2,1)$};

\node (8) at (23,0) {$(4)$};
\node (9) at (23,4) {$(1,3)$};
\node (10) at (23,8) {$(1,1,2)$};
\node (11) at (23,12) {$(1,1,1,1)$};
\draw[-to] (8) edge["1"] (9);
\draw[-to] (9) edge["1"] (10);
\draw[-to] (10) edge["2"] (11);

\node (12) at (28,0) {$(2,2)$};
\node (13) at (28,4) {$(2,1,1)$};
\draw[-to] (12) edge["2"] (13);

\node (14) at (35,0) {$(3,1)$};
\node (15) at (35,4) {$(1,2,1)$};
\draw[-to] (14) edge["1"] (15);

\draw[-to] (15) edge["3",above] (13);

\node (16) at (0,8) {$(5)$};
\node (17) at (0,12) {$(1,4)$};
\node (18) at (0,16) {$(1,1,3)$};
\node (19) at (0,20) {$(1,1,1,2)$};
\node (20) at (0,24) {$(1,1,1,1,1)$};
\draw[-to] (16) edge["1"] (17);
\draw[-to] (17) edge["1"] (18);
\draw[-to] (18) edge["1"] (19);
\draw[-to] (19) edge["2"] (20);

\node (21) at (7,16) {$(2,3)$};
\node (22) at (7,20) {$(2,1,2)$};
\node (23) at (7,24) {$(2,1,1,1)$};
\draw[-to] (21) edge["1"] (22);
\draw[-to] (22) edge["2"] (23);

\node (24) at (18,16) {$(3,2)$};
\node (25) at (13,20) {$(1,2,2)$};
\node (26) at (23,20) {$(3,1,1)$};
\node (27) at (18,24) {$(1,2,1,1)$};
\draw[-to] (24) edge["1",below left] (25);
\draw[-to] (24) edge["2",below right] (26);
\draw[-to] (25) edge["2",above left] (27);
\draw[-to] (26) edge["1",above right] (27);

\node (28) at (29,16) {$(4,1)$};
\node (29) at (29,20) {$(1,3,1)$};
\node (30) at (29,24) {$(1,1,2,1)$};
\draw[-to] (28) edge["1"] (29);
\draw[-to] (29) edge["1"] (30);

\draw[-to] (27) edge["3",above] (23);
\draw[-to] (30) edge["3",above] (27);

\node (31) at (35,16) {$(2,2,1)$};

\end{tikzpicture}
\end{center}
\par\end{centering}
\vspace{-10bp}
\caption{\label{f-arrows}All relations $\rightarrow_{1}$, $\rightarrow_{2}$, and $\rightarrow_{3}$ among compositions of at most 5.}
\bigskip{}
\end{figure}

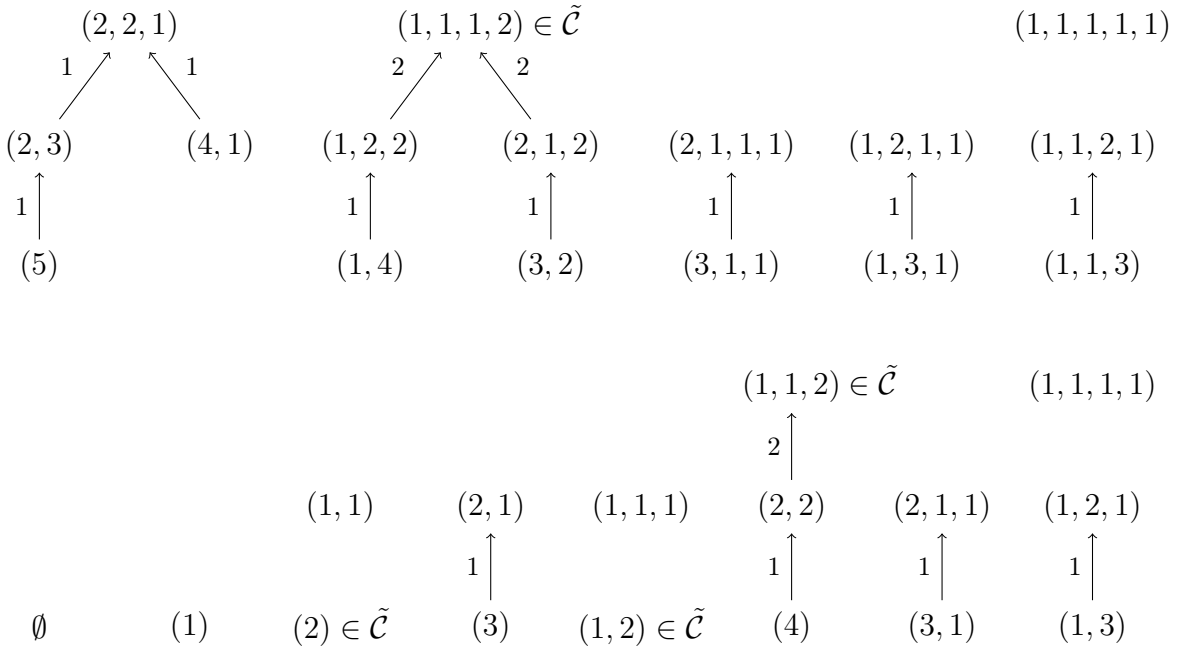
\begin{figure}
\noindent \begin{centering}
\begin{center}
\begin{tikzpicture}[scale=0.4,auto,every edge quotes/.style = {font=\footnotesize}]

\node (1) at (0,0) {$\emptyset$};

\node (2) at (5,0) {$(1)$};

\node (3) at (10,0) {$(2)\in \tilde{\mathcal{C}}$};

\node (4) at (10,4) {$(1,1)$};

\node (5) at (15,0) {$(3)$};
\node (6) at (15,4) {$(2,1)$};
\draw[-to] (5) edge["1"] (6);

\node (7) at (20,0) {$(1,2)\in \tilde{\mathcal{C}}$};

\node (8) at (20,4) {$(1,1,1)$};

\node (9) at (25,0) {$(4)$};
\node (10) at (25,4) {$(2,2)$};
\node (11) at (25,8) {$(1,1,2)$};
\node (11a) at (27.7,8.2) {$\in \tilde{\mathcal{C}}$};
\draw[-to] (9) edge["1"] (10);
\draw[-to] (10) edge["2"] (11);

\node (12) at (30,0) {$(3,1)$};
\node (13) at (30,4) {$(2,1,1)$};
\draw[-to] (12) edge["1"] (13);

\node (14) at (35,0) {$(1,3)$};
\node (15) at (35,4) {$(1,2,1)$};
\draw[-to] (14) edge["1"] (15);

\node (16) at (35,8) {$(1,1,1,1)$};

\node (17) at (0,12) {$(5)$};
\node (18) at (0,16) {$(2,3)$};
\node (19) at (6,16) {$(4,1)$};
\node (20) at (3,20) {$(2,2,1)$};
\draw[-to] (17) edge["1"] (18);
\draw[-to] (18) edge["1",above left] (20);
\draw[-to] (19) edge["1",above right] (20);

\node (21) at (11,12) {$(1,4)$};
\node (22) at (11,16) {$(1,2,2)$};
\node (23) at (17,12) {$(3,2)$};
\node (24) at (17,16) {$(2,1,2)$};
\node (25) at (14,20) {$(1,1,1,2)$};
\node (25a) at (17.2,20.2) {$\in \tilde{\mathcal{C}}$};
\draw[-to] (21) edge["1"] (22);
\draw[-to] (22) edge["2",above left] (25);
\draw[-to] (23) edge["1"] (24);
\draw[-to] (24) edge["2",above right] (25);

\node (26) at (23,12) {$(3,1,1)$};
\node (27) at (23,16) {$(2,1,1,1)$};
\draw[-to] (26) edge["1"] (27);

\node (28) at (29,12) {$(1,3,1)$};
\node (29) at (29,16) {$(1,2,1,1)$};
\draw[-to] (28) edge["1"] (29);

\node (30) at (35,12) {$(1,1,3)$};
\node (31) at (35,16) {$(1,1,2,1)$};
\draw[-to] (30) edge["1"] (31);

\node (32) at (35,20) {$(1,1,1,1,1)$};

\end{tikzpicture}
\end{center}
\par\end{centering}
\vspace{-10bp}
\caption{\label{f-arrowsM}All relations $\triangleright_{1}$ and $\triangleright_{2}$, and elements of $\tilde{\mathcal{C}}$, among compositions of at most 5.}
\end{figure}

\pagebreak

\begin{thm} \label{t-F}
The ideals $\mathcal{K}^{\Pk}$ and $\mathcal{K}^{\pk}$ are spanned \textup{(}as $\mathbb{Q}$-vector spaces\textup{)} in the following ways\textup{:}
\begin{enumerate}
\item [\normalfont{(a)}]$\mathcal{K}^{\Pk}=\Span\{\,F_{J}-F_{K}:J\rightarrow_{1}K\text{ or }J\rightarrow_{2}K\,\}$
\item [\normalfont{(b)}]$\mathcal{K}^{\pk}=\Span\{\,F_{J}-F_{K}:J\rightarrow_{1}K,\ J\rightarrow_{2}K,\text{ or }J\rightarrow_{3}K\,\}$
\end{enumerate}
\end{thm}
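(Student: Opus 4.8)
The plan is to derive both parts from Theorem~\ref{t-Fsst}(a). For part~(a) take $S=\{(J,K):J\rightarrow_{1}K\text{ or }J\rightarrow_{2}K\}$, and for part~(b) take $S=\{(J,K):J\rightarrow_{1}K,\ J\rightarrow_{2}K,\text{ or }J\rightarrow_{3}K\}$; then it suffices in each case to show that the connected components of $G_{S}$ coincide with the $\Pk$-equivalence classes (respectively the $\pk$-equivalence classes) of $\mathcal{C}$. This splits into a \emph{soundness} statement---every edge of $G_{S}$ joins two equivalent compositions, so each connected component lies inside a single equivalence class---and a \emph{completeness} statement---any two equivalent compositions lie in the same component.

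For soundness I would first record that for a composition $L=(j_{1},\dots,j_{m})\vDash n$,
\[
\Pk L=\{\,j_{1}+\cdots+j_{k}:1\le k\le m-1,\ j_{k}\ge2\,\},
\]
since the partial sum $j_{1}+\cdots+j_{k}$ (for $k\le m-1$) is a descent whose preceding increasing run has length $j_{k}$, and such a descent is a peak exactly when that run has length at least $2$. Granting this, one checks by direct computation that $\rightarrow_{1}$ and $\rightarrow_{2}$ preserve both the length and the peak set---each move only inserts or deletes one descent whose preceding run has length $1$, which is never a peak---and that $\rightarrow_{3}$ preserves the length and the \emph{number} of peaks (it trades a peak at one partial sum for a peak at a neighbouring one) even though it alters the peak set. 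Hence $\rightarrow_{1},\rightarrow_{2}$ respect $\sim_{\Pk}$ (so also $\sim_{\pk}$), and $\rightarrow_{3}$ respects $\sim_{\pk}$. I expect the fiddly point here to be the boundary cases: allowing $l=1$ in $\rightarrow_{1}$, the trailing-part conditions in $\rightarrow_{2}$ and in the constraint $l\le m-2$ of $\rightarrow_{3}$, and degenerate short compositions such as $(2)$.

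For completeness, call a composition \emph{reduced} if all of its parts are $\le2$ and its last part is $1$. Repeatedly applying $\rightarrow_{1}$ to replace any part $j_{l}\ge3$ by the two parts $1$ and $j_{l}-1$ drives every part down to at most $2$, after which one application of $\rightarrow_{2}$ (if needed) removes a trailing $2$; hence every composition is connected in $G_{S}$ to a reduced one. Conversely, a reduced composition $C\vDash n$ is determined by the pair $(n,\Pk C)$: its parts of length $2$ must occupy exactly the position pairs $\{p-1,p\}$ for $p\in\Pk C$, and because $\Pk C\subseteq\{2,\dots,n-1\}$ contains no two consecutive integers (immediate from the displayed formula) these pairs are pairwise disjoint and contained in $\{1,\dots,n-1\}$, so all remaining positions are forced to be $1$'s. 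Thus each $\Pk$-equivalence class contains a \emph{unique} reduced composition, and since the reduction moves respect $\sim_{\Pk}$, every composition is connected to the unique reduced representative of its $\Pk$-class. This gives completeness for $\Pk$, and part~(a) follows.

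For part~(b) it remains to connect reduced compositions sharing the same length and the same number of $2$'s---equivalently, the same number of peaks, since in a reduced composition every part of length $2$ is followed by a later part and so contributes a peak. As the last part of a reduced composition is $1$, any adjacent pair ``$1,2$'' in it has its $2$ in a non-final position, so $\rightarrow_{3}$ applies and swaps it to ``$2,1$'', staying within reduced compositions; repeatedly moving a $2$ leftward past a $1$ strictly decreases the number of such inversions, so we eventually reach the single composition consisting of $r$ parts equal to $2$ followed by $n-2r$ parts equal to $1$ (which is reduced, as $n-2r\ge1$). Combined with the reduction from part~(a), every $L\vDash n$ with $\pk L=r$ is connected in $G_{S}$ to this standard composition; together with soundness this identifies the connected components of $G_{S}$ with the $\pk$-equivalence classes, and part~(b) follows from Theorem~\ref{t-Fsst}(a). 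The main obstacle throughout is the careful case analysis in the soundness step together with the verification that the reduced representative of a $\Pk$-class is genuinely unique; the connectivity arguments themselves are routine.
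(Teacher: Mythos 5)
Your proposal is correct and follows essentially the same route as the paper: reduce via $\rightarrow_{1}$ and $\rightarrow_{2}$ to a canonical composition with all parts $\le 2$ ending in $1$ (the paper's $J_{\beta}$), show this representative is unique within each $\Pk$-class, then for $\pk$ further normalize via $\rightarrow_{3}$ to $(2^{b},1^{a})$. The only cosmetic difference is that you establish uniqueness of the reduced form by direct reconstruction from $(n,\Pk C)$, while the paper argues by contradiction at the first differing part; these are interchangeable.
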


In fact, there is a simple way to trim the spanning sets in Theorem \ref{t-F} to obtain bases for $\mathcal{K}^{\Pk}$ and $\mathcal{K}^{\pk}$ which still admit nice descriptions, but we will delay the statement of these results until later.

We then use Theorem \ref{t-F} and a change-of-basis argument to obtain characterizations for these ideals in terms of monomial quasisymmetric functions. Given compositions $J=(j_{1},j_{2},\dots,j_{m})$ and $K$, we write: 
\begin{itemize}
\item $J\triangleright_{1}K$ if there exists $l\in[m]$ for which $j_{l}>2$ and 
\[
K=(j_{1},\dots,j_{l-1},2,j_{l}-2,j_{l+1},\dots,j_{m});
\]
\item $J\triangleright_{2}K$ if $j_{m}=2$, $j_{l}=2$ for some $l\in[m-1]$, and
\[
K=(j_{1},\dots,j_{l-1},1,1,j_{l+1},\dots,j_{m-1},2);
\]
\item $J\in\mathcal{\tilde{C}}$ if $J=(1^{m-1},2)=(\underset{m-1\text{ parts}}{\underbrace{1,\dots,1}},2)$. (Note that $(2)\in\mathcal{\tilde{C}}$ in the case $m=1$.)
\end{itemize}

See Figure \ref{f-arrowsM} for all relations $\triangleright_{1}$ and $\triangleright_{2}$, and elements of $\mathcal{\tilde{C}}$, among compositions of at most 5.

\begin{thm} \label{t-M}
The ideals $\mathcal{K}^{\Pk}$ and $\mathcal{K}^{\pk}$ are spanned \textup{(}as $\mathbb{Q}$-vector spaces\textup{)} in the following ways\textup{:}
\begin{enumerate}
\item [\normalfont{(a)}]$\mathcal{K}^{\Pk}=\Span\left(\{\,M_{J}+M_{K}:J\triangleright_{1}K\text{ or }J\triangleright_{2}K\,\}\cup\{\,M_{J}:J\in\mathcal{\tilde{C}}\,\}\right)$
\item [\normalfont{(b)}]$\mathcal{K}^{\pk}=\Span\!\Big(\{\,M_{J}+M_{K}:J\triangleright_{1}K\text{ or }J\triangleright_{2}K\,\}\cup\{\,M_{J}:J\in\mathcal{\tilde{C}}\,\}\cup\{\,M_{J}-M_{K}:J\rightarrow_{3}K\,\}\Big)$
\end{enumerate}
Therefore, $\Pk$ and $\pk$ are $M$-binomial.
\end{thm}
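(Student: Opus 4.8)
The plan is to derive Theorem~\ref{t-M} from Theorem~\ref{t-F} by a change-of-basis computation between the fundamental and monomial bases of $\QSym$. For a composition $J$ of $n$, write $J\preceq\beta$ to mean that $\beta$ refines $J$, i.e.\ $|\beta|=n$ and $\Des J\subseteq\Des\beta$, and recall the classical transition formulas $F_J=\sum_{J\preceq\beta}M_\beta$ and $M_J=\sum_{J\preceq\beta}(-1)^{\ell(\beta)-\ell(J)}F_\beta$, where $\ell$ denotes the number of parts. Because Theorem~\ref{t-F} already exhibits $\mathcal{K}^{\Pk}$ and $\mathcal{K}^{\pk}$ as spans of explicit families of fundamental differences, it suffices to prove, for $\st\in\{\Pk,\pk\}$, the two inclusions $\Span\mathcal{M}^{\st}\subseteq\mathcal{K}^{\st}$ and $\mathcal{K}^{\st}\subseteq\Span\mathcal{M}^{\st}$, where $\mathcal{M}^{\st}$ is the monomial spanning set proposed in the statement.

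For $\Span\mathcal{M}^{\st}\subseteq\mathcal{K}^{\st}$ I would expand each generator in the fundamental basis. If $J=(1^{m-1},2)\in\tilde{\mathcal{C}}$, then $J$ has the unique proper refinement $(1^{m+1})$, so $M_J=F_J-F_{(1^{m+1})}$; since $J\to_2(1^{m+1})$, Theorem~\ref{t-F} gives $M_J\in\mathcal{K}^{\Pk}\subseteq\mathcal{K}^{\pk}$. When $J\triangleright_1 K$ or $J\triangleright_2 K$, the composition $K$ refines $J$ by inserting a single new descent, say at position $a$; the refinements common to $J$ and $K$ then cancel in the two transition sums, leaving $M_J+M_K=\sum_\beta(-1)^{\ell(\beta)-\ell(J)}F_\beta$, the sum running over refinements $\beta$ of $J$ with $a\notin\Des\beta$. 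Matching each such $\beta$ with the refinement obtained by toggling one appropriate cut (partners differing in length by one) collapses this to a $\mathbb{Q}$-linear combination of differences $F_\gamma-F_{\gamma'}$ in which $\gamma'$ is $\gamma$ with one extra descent; a short case check on descent sets shows that inserting that particular descent neither creates nor destroys a peak, so $\gamma\sim_{\Pk}\gamma'$ and hence $M_J+M_K\in\mathcal{K}^{\Pk}$. The remaining generators $M_J-M_K$ with $J\to_3 K$ are handled the same way, except that here $\Des J$ and $\Des K$ differ by \emph{sliding} one descent by one position; the common refinements again cancel, and the surviving combination of differences $F_\gamma-F_{\gamma'}$ has $\gamma'$ obtained from $\gamma$ by moving a single descent, which merely shifts one peak and so preserves $\pk$\textemdash giving $M_J-M_K\in\mathcal{K}^{\pk}$.

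For the reverse inclusion it is enough, by Theorem~\ref{t-F}, to show $F_J-F_K\in\Span\mathcal{M}^{\st}$ for each generating relation ($\to_1,\to_2$, and $\to_3$ as well for $\pk$). One approach is the dual telescoping: expand $F_J-F_K$ in the monomial basis\textemdash again the common refinements cancel\textemdash and regroup the resulting sum of monomial functions into the generators $M_P+M_Q$, $M_R$, and (for $\pk$) $M_P-M_Q$. Alternatively one can bypass the regrouping: the first inclusion already gives $\Span\mathcal{M}^{\st}\subseteq\mathcal{K}^{\st}$, and a routine computation (summing $|C|-1$ over the $\st$-equivalence classes $C$) shows that $\dim\mathcal{K}^{\st}_n$ equals the number of compositions of $n$ minus the number of $\st$-classes among them\textemdash that is, $2^{n-1}-f_n$ with $f_n$ the $n$-th Fibonacci number (the number of peak sets of length $n$) for $\st=\Pk$, and $2^{n-1}-\lfloor(n+1)/2\rfloor$ for $\st=\pk$. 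It then remains to verify that the degree-$n$ generators of $\mathcal{M}^{\st}$ are linearly independent and exactly this many: independence comes from a leading-monomial argument with respect to a suitably chosen total order on compositions, and the count is a finite combinatorial identity enumerating the $\triangleright_1$-, $\triangleright_2$-, and $\to_3$-pairs together with the single element of $\tilde{\mathcal{C}}$ of each size.

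The main obstacle is the combinatorial bookkeeping in the telescoping expansions\textemdash verifying that the cancellations leave precisely the claimed families of differences $F_\gamma-F_{\gamma'}$, and that the peak set (respectively, the peak number) is genuinely unchanged in every case, including the boundary situations where the active part sits at the front or the back of the composition\textemdash and, in the dimension-count route, the enumeration identity and the choice of total order making the leading-monomial argument work. Once Theorem~\ref{t-M} is proved, the $M$-binomiality of $\Pk$ and $\pk$ is immediate, since each listed generator has the form $\lambda M_J+\mu M_K$ with $\lambda,\mu\in\{0,\pm1\}\subseteq\mathbb{Q}$.
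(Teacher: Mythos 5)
Your plan starts from Theorem~\ref{t-F} and carries out a change of basis between the fundamental and monomial families, which is indeed the paper's strategy; the forward inclusion $\Span\mathcal{M}^{\st}\subseteq\mathcal{K}^{\st}$ that you sketch (expand $M_J+M_K$ or $M_J-M_K$ over refinements, cancel the common refinements, and pair the survivors by toggling one well-chosen descent) is essentially the content of Lemma~\ref{l-MtoF}~(b)--(c) and Propositions~\ref{p-PkmtoM}, \ref{p-pkmtoM}. Where you diverge is in how the two spans are shown to be equal: the paper does \emph{not} prove the two containments separately. Instead it introduces intermediate indexed families $(\mathbf{f}_{C,k})$ and $(\mathbf{m}_{C,k})$ over carefully constructed index sets $\Omega_n$ (and $\Theta_n$ for $\pk$), shows each of $\mathcal{F}_n^{\st}$ and $\mathcal{M}_n^{\st}$ has the same span as the corresponding intermediate family (Propositions~\ref{p-PkftoF}, \ref{p-PkmtoM}, \ref{p-pkftoF}, \ref{p-pkmtoM}), and then proves $\Span(\mathbf{f}_{C,k})=\Span(\mathbf{m}_{C,k})$ in one stroke using the invertibly triangular expansion lemma (Lemma~\ref{l-invtri}) with respect to a partial order on $\Omega_n$. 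This gets both directions simultaneously and avoids your ``dual telescoping'' or dimension-count arguments entirely. Both of your alternatives could in principle be made to work, but they hide nontrivial details: in the dual telescoping, the regrouping of $\sum_\beta M_\beta$ (coefficients all $+1$) into $\triangleright_1$- and $\triangleright_2$-pairs plus the single $\tilde{\mathcal{C}}$ term is not automatic and needs the same case-by-case bookkeeping the paper carries out; in the dimension-count route, you would need to actually verify the Fibonacci / $\lfloor(n+1)/2\rfloor$ enumeration, prove linear independence of the proposed $\mathcal{M}^{\st}_n$ generators by exhibiting a leading-monomial order, and count that there are the right number of $\triangleright_1$-, $\triangleright_2$- and $\rightarrow_3$-pairs, none of which you carry out. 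A more local gap: your ``matching each $\beta$ with the refinement obtained by toggling one appropriate cut'' and the subsequent ``short case check'' suppress the crucial point that the correct cut to toggle is position $k-1$ (one less than the new descent $k$ coming from $\triangleright_1$), and that this only yields a $\Pk$-preserving pairing because of the specific spacing conditions built into $\Omega_{n,1}$ and $\Omega_{n,2}$; this is precisely what Lemma~\ref{l-MtoF}~(c) and the set-equality verifications in Proposition~\ref{p-Pkftom} are doing.
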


We note that the shuffle algebra $\mathcal{A}^{\Pk}$ is isomorphic to Stembridge's algebra $\Pi$ of peak quasisymmetric functions \cite{Stembridge1997}, which occupies an important position in the theory of combinatorial Hopf algebras \cite{Aguiar2006}. Our results about $\mathcal{K}^{\Pk}$ can thus be translated into results about the canonical projection map from $\QSym$ to $\Pi$. Moreover, $\mathcal{K}^{\Pk}$ is the orthogonal complement of Nyman's peak algebra \cite{Nyman2003} (as a nonunital subalgebra of the noncommutative symmetric functions), and similarly $\mathcal{K}^{\pk}$ is the orthogonal complement of Schocker's Eulerian peak algebra \cite{Schocker2005}.

Peaks and valleys in permutations are related via complementation, and this symmetry will be exploited to obtain from Theorem \ref{t-F} an analogous result for the kernels of the valley set and valley number statistics. We will give the statement of this result later.

\subsection{Outline}

This paper is organized as follows. Section 2 focuses on background material, including definitions and properties of various descent statistics, the connection between quasisymmetric functions and shuffle-compatibility, and a result from linear algebra which will be used in our later proofs. In Section 3, we first prove a couple general results about spanning sets and linear independence in arbitrary vector spaces, which together imply Theorem \ref{t-Fsst}. We then use Theorem~\ref{t-Fsst} to produce our characterizations for the ideals $\mathcal{K}^{\Pk}$ and $\mathcal{K}^{\pk}$ given in Theorem~\ref{t-F}, and also obtain linear bases for $\mathcal{K}^{\Pk}$ and $\mathcal{K}^{\pk}$ by trimming the spanning sets from Theorem~\ref{t-F} in a simple way. Section 4 will be devoted to the proof of Theorem~\ref{t-M}, and Section 5 on the ideals $\mathcal{K}^{\Val}$ and $\mathcal{K}^{\val}$ for the valley set and valley number statistics. We end in Section 6 with a discussion of future directions of research.

All vector spaces, algebras, linear combinations, spans, and related notions are over the field $\mathbb{Q}$ except in Section \ref{ss-Fsst} (where we work over an arbitrary field of characteristic $\neq 2$).

\numberwithin{thm}{section}

\section{\label{s-prelim}Preliminaries}

\subsection{\label{ss-permstat}Descent statistics}

Recall that a permutation statistic $\st$ is a \textit{descent statistic} if $\st\pi=\st\sigma$ whenever $\Comp\pi=\Comp\sigma$. The descent set $\Des$ is clearly a descent statistic, and other examples of descent statistics include the following:

\begin{itemize}
\item The \textit{descent number} $\des$ and \textit{major index} $\maj$.
Given a permutation $\pi$, define 
\[
\des\pi\coloneqq\left|\Des\pi\right|\quad\text{and}\quad\maj\pi\coloneqq\sum_{k\in\Des\pi}k
\]
to be its number of descents and its sum of descents, respectively.

\item The \textit{peak set} $\Pk$ and the \textit{peak number} $\pk$. Given $\pi\in\mathfrak{P}_{n}$, we say that $i\in\{2,3,\dots,n-1\}$ is a\textit{ peak} of $\pi$ if $\pi_{i-1}<\pi_{i}>\pi_{i+1}$. Then $\Pk\pi$ is defined to be the set of peaks of $\pi$ and $\pk\pi$ its number of peaks.

\item The \textit{exterior peak set} $\Epk$ and the \textit{exterior peak number} $\epk$. Given $\pi\in\mathfrak{P}_{n}$, we say that $i\in[n]$ is an \textit{exterior peak} of $\pi$ if $i$ is a peak of $\pi$, if $i=1$ and $\pi_{1}>\pi_{2}$,  if $i=n$ and $\pi_{n-1}<\pi_{n}$, or if $i=n=1$. Then $\Epk\pi$ is defined to be the set of exterior peaks of $\pi$ and $\epk\pi$ its number of exterior peaks. 

\end{itemize}

For example, if $\pi=713649$, then $\des\pi=2$, $\maj\pi=5$, $\Pk\pi=\{4\}$, $\pk\pi=1$, $\Epk\pi=\{1,4,6\}$, and $\epk\pi=3$. Several additional descent statistics---associated with valleys, left peaks and right peaks---will be introduced later in this paper. Other descent statistics which we do not consider in this work include those based on double descents, alternating descents, and biruns; see \cite{Gessel2018,Zhuang2016} for definitions.

Next, recall that the notation $\Des L$ refers to the descent set of any permutation with descent composition $L$, and from Equation (\ref{e-Des}), we have 
\[
\Des L=\{j_{1},j_{1}+j_{2},\dots,j_{1}+j_{2}+\cdots+j_{m-1}\}
\]
for $L=(j_{1},j_{2},\dots,j_{m})$. Similarly, we can use Equation (\ref{e-Comp}) to define $\Comp$ on subsets: given $C=\{i_{1}<i_{2}<\cdots<i_{m}\}\subseteq[n-1]$, let 
\[
\Comp C\coloneqq(i_{1},i_{2}-i_{1},\dots,i_{m}-i_{m-1},n-i_{m}).
\]
Then $\Des$ and $\Comp$ are inverse bijections between compositions of $n$ and subsets of $[n-1]$.

For our characterizations of the ideals $\mathcal{K}^{\Pk}$ and $\mathcal{K}^{\pk}$, it will be helpful to have an explicit formula for the $\Pk$ and $\pk$ statistics on compositions. The lemma below follows immediately from the fact that the peaks of a permutation occur precisely at the end of its non-final increasing runs of length at least 2.

\begin{lem} \label{l-PkpkL}
Let $L=(j_{1},j_{2},\dots,j_{m})$ be a composition.
Then:
\begin{enumerate}
\item [\normalfont{(a)}]$\Pk L=\left\{ \,\sum_{i=1}^{k}j_{i}:j_{k}\geq2\text{ and }k\in [m-1]\,\right\}$
\item [\normalfont{(b)}]$\pk L=\left|\{\,k\in [m-1]:j_{k}\geq2\,\}\right|$
\end{enumerate}
\end{lem}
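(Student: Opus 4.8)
The plan is to pick any permutation $\pi\in\mathfrak{P}_{n}$ with $\Comp\pi=L$ and read off its peaks directly from the increasing-run decomposition. Write $s_{k}\coloneqq j_{1}+j_{2}+\cdots+j_{k}$ for $k\in[m]$ (with $s_{0}\coloneqq0$), so that $s_{m}=n$ and, by \eqref{e-Des}, $\Des\pi=\{s_{1},s_{2},\dots,s_{m-1}\}$; equivalently, the $k$-th increasing run of $\pi$ occupies the positions $s_{k-1}+1,\dots,s_{k}$.

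First I would record the standard reformulation of ``peak'' in terms of the descent set: an index $i\in\{2,3,\dots,n-1\}$ is a peak of $\pi$ if and only if $i\in\Des\pi$ and $i-1\notin\Des\pi$, since $\pi_{i-1}<\pi_{i}>\pi_{i+1}$ says exactly that $i-1$ is an ascent and $i$ is a descent. In particular every peak of $\pi$ has the form $i=s_{k}$ for some $k\in[m-1]$, so it suffices to decide, for each such $k$, whether $s_{k}$ is a peak. Fix $k\in[m-1]$. Since $k\leq m-1$ we have $s_{k}\leq s_{m-1}\leq n-1$, so the only conditions that can fail are $s_{k}\geq2$ and $s_{k}-1\notin\Des\pi$. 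As the partial sums $s_{1}<s_{2}<\cdots<s_{m-1}$ are strictly increasing, the only one that could equal $s_{k}-1$ is $s_{k-1}$, and $s_{k-1}=s_{k}-1$ precisely when $j_{k}=1$; moreover $s_{k-1}\in\Des\pi$ precisely when $k\geq2$. Hence $s_{k}-1\notin\Des\pi$ if and only if $k=1$ or $j_{k}\geq2$. Combining with $s_{k}\geq2$: when $k\geq2$ one has $s_{k}\geq k\geq2$ automatically, so $s_{k}$ is a peak iff $j_{k}\geq2$; when $k=1$ one has $s_{1}=j_{1}$, so $s_{1}$ is a peak iff $j_{1}\geq2$. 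In all cases $s_{k}$ is a peak of $\pi$ if and only if $j_{k}\geq2$, which gives part (a); part (b) follows by taking cardinalities, the map $k\mapsto s_{k}$ being injective on $[m-1]$.

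I do not expect a genuine obstacle: the content is exactly the observation quoted before the lemma, that peaks occur at the ends of non-final increasing runs of length at least $2$. The only place demanding any care is the boundary bookkeeping---confirming that $s_{k}\leq n-1$ for non-final runs, so that no peak is lost at the right end, and that a length-one first run correctly fails to produce a peak at position $1$, so that none is spuriously gained at the left end---and both are dispatched by the strictly-increasing partial-sum argument above.
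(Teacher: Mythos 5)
Your proof is correct and is essentially the argument the paper has in mind; the paper does not spell out a proof but states that the lemma "follows immediately from the fact that the peaks of a permutation occur precisely at the end of its non-final increasing runs of length at least 2," which is exactly the reformulation you verify via the descent-set characterization $\Pk\pi=\{\,i\in\{2,\dots,n-1\}:i\in\Des\pi,\ i-1\notin\Des\pi\,\}$ together with $\Des\pi=\{s_1,\dots,s_{m-1}\}$. Your boundary bookkeeping (the cases $k=1$ and $k\geq 2$, and the bound $s_k\leq n-1$) is careful and correct.
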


\vspace{5bp}

\subsection{\label{ss-qsym}Quasisymmetric functions and shuffle-compatibility}

Quasisymmetric functions arose in the early work of Stanley as generating functions for $P$-partitions \cite{Stanley1972}, were first defined and studied per se by Gessel \cite{Gessel1984}, and are now ubiquitous in algebraic combinatorics. We review some elementary definitions and results surrounding quasisymmetric functions, emphasizing their role in the theory of shuffle-compatibility; see \cite[Section 7.19]{Stanley2001}, \cite[Section 5]{Grinberg2020}, and \cite{Luoto2013} for further references.

Let $x_{1},x_{2},\dots$ be commuting variables. A formal power series $f\in\mathbb{Q}[[x_{1},x_{2},\dots]]$ of bounded degree is called a \textit{quasisymmetric function} if for any positive integers $a_{1},a_{2},\dots,a_{k}$, if $i_{1}<i_{2}<\cdots<i_{k}$ and $j_{1}<j_{2}<\cdots<j_{k}$ then
\[
[x_{i_{1}}^{a_{1}}x_{i_{2}}^{a_{2}}\cdots x_{i_{k}}^{a_{k}}]\,f=[x_{j_{1}}^{a_{1}}x_{j_{2}}^{a_{2}}\cdots x_{j_{k}}^{a_{k}}]\,f,
\]
i.e., the monomials $x_{i_{1}}^{a_{1}}x_{i_{2}}^{a_{2}}\cdots x_{i_{k}}^{a_{k}}$ and $x_{j_{1}}^{a_{1}}x_{j_{2}}^{a_{2}}\cdots x_{j_{k}}^{a_{k}}$ have the same coefficients in $f$. Let $\QSym_{n}$ denote the vector space of quasisymmetric functions homogeneous of degree $n$, and let
\[
\QSym\coloneqq\bigoplus_{n=0}^{\infty}\QSym_{n}.
\]

The monomial quasisymmetric functions $\{M_{L}\}_{L\vDash n}$ and the fundamental quasisymmetric functions $\{F_{L}\}_{L\vDash n}$ defined in the introduction are two bases of $\QSym_{n}$, and since (for $n\geq1$) there are $2^{n-1}$ compositions of $n$, it follows that $\QSym_{n}$ has dimension $2^{n-1}$. Through the inverse bijections $\Comp$ and $\Des$, we may also index the monomial and fundamental quasisymmetric functions by subsets $C$ of $[n-1]$ in writing 
\[
M_{n,C}\coloneqq M_{\Comp C}\quad\text{and}\quad F_{n,C}\coloneqq F_{\Comp C},
\]
and it will sometimes be convenient for us to do so.

Let us recall the change-of-basis formulas between the monomial and fundamental bases. First, we say that $J\vDash n$ \textit{refines} (or is a \textit{refinement} of) $K\vDash n$ if $\Des K\subseteq\Des J$. Informally, this amounts to saying that we can obtain $K$ from $J$ by combining some of its adjacent parts. For example, we have that $J=(2,1,3,1,1,2)$ refines $K=(3,5,2)$ because $\Des J=\{2,3,6,7,8\}$ contains $\Des K=\{3,8\}$, and indeed we have $2+1=3$ and $3+1+1=5$. Let us write $J\leq K$ if $J$ refines $K$. Then we have
\[
F_{L}=\sum_{K\leq L}M_{K},\quad\text{or equivalently,}\quad F_{n,C}=\sum_{C\subseteq B\subseteq[n-1]}M_{n,B}.
\]
Part (a) of the next lemma then follows from inclusion-exclusion. Parts (b)\textendash (c) appear as Propositions 5.10 (b)\textendash (c) of \cite{Grinberg2018a}.

\begin{lem} \label{l-MtoF}
Let $C\subseteq[n-1]$. 
\begin{enumerate}
\item [\normalfont{(a)}]We have
\[
M_{n,C}=\sum_{\substack{C\subseteq B\subseteq[n-1]}}(-1)^{\left|B\backslash C\right|}F_{n,B}.
\]
\item [\normalfont{(b)}]Suppose that $k\in[n-1]$ and $k\notin C$. Then
\[
M_{n,C}+M_{n,C\cup\{k\}}=\sum_{\substack{C\subseteq B\subseteq[n-1]\\k\notin B}}(-1)^{\left|B\backslash C\right|}F_{n,B}.
\]
\item [\normalfont{(c)}]Suppose that $k\in[n-1]$, $k\notin C$, and $k-1\notin C\cup\{0\}$.
Then 
\[
M_{n,C}+M_{n,C\cup\{k\}}=\sum_{\substack{C\subseteq B\subseteq[n-1]\\k,k-1\notin B}}(-1)^{\left|B\backslash C\right|}(F_{n,B}-F_{n,B\cup\{k-1\}}).
\]
\end{enumerate}
\end{lem}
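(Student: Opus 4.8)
The plan is to deduce all three parts from the single expansion $F_{n,C}=\sum_{C\subseteq B\subseteq[n-1]}M_{n,B}$ recalled just before the lemma, feeding (a) into (b) and (b) into (c).

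For (a), the cleanest route is Möbius inversion on the Boolean lattice of subsets of $[n-1]$ ordered by inclusion, whose Möbius function is $\mu(C,B)=(-1)^{|B\setminus C|}$; applied to $F_{n,C}=\sum_{B\supseteq C}M_{n,B}$ this gives $M_{n,C}=\sum_{B\supseteq C}(-1)^{|B\setminus C|}F_{n,B}$ at once. Equivalently, and more in the spirit of the ``inclusion--exclusion'' remark, one substitutes the $F$-to-$M$ expansion into the right-hand side, swaps the order of summation, and checks that the coefficient of each $M_{n,D}$ (for $D\supseteq C$) is $\sum_{C\subseteq B\subseteq D}(-1)^{|B\setminus C|}=\sum_{E\subseteq D\setminus C}(-1)^{|E|}$, which vanishes unless $D=C$.

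For (b), I would start from the formula of (a) for $M_{n,C}$ and split the index set $\{B:B\supseteq C\}$ according to whether $k\in B$. Since $k\notin C$, the sets containing $k$ are exactly the $B'\cup\{k\}$ with $B'\supseteq C$ and $k\notin B'$, and $(-1)^{|(B'\cup\{k\})\setminus C|}=-(-1)^{|B'\setminus C|}$; this rewrites $M_{n,C}=\sum_{B\supseteq C,\,k\notin B}(-1)^{|B\setminus C|}(F_{n,B}-F_{n,B\cup\{k\}})$. Applying the same splitting to $M_{n,C\cup\{k\}}$ (again using $k\notin C$) yields $M_{n,C\cup\{k\}}=\sum_{B\supseteq C,\,k\notin B}(-1)^{|B\setminus C|}F_{n,B\cup\{k\}}$, and summing the two expressions cancels the $F_{n,B\cup\{k\}}$ terms, leaving $\sum_{B\supseteq C,\,k\notin B}(-1)^{|B\setminus C|}F_{n,B}$.

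For (c), I would take the identity just established in (b) and split its index set $\{B:B\supseteq C,\ k\notin B\}$ according to whether $k-1\in B$; the hypotheses $k\notin C$, $k-1\notin C$, and $k-1\neq 0$ guarantee that $k-1\in[n-1]$ and that $B'\mapsto B'\cup\{k-1\}$ is a bijection from $\{B':B'\supseteq C,\ k,k-1\notin B'\}$ onto the part of the index set containing $k-1$, again contributing one sign flip. Collecting terms rewrites the sum as $\sum_{B\supseteq C,\ k,k-1\notin B}(-1)^{|B\setminus C|}(F_{n,B}-F_{n,B\cup\{k-1\}})$, which is the claim. None of this presents a genuine obstacle; the only care needed is in tracking signs and verifying that the reindexings $B\mapsto B\cup\{k\}$ and $B\mapsto B\cup\{k-1\}$ are honest bijections, which is precisely where the side conditions $k,k-1\notin C$ and $k-1\geq 1$ enter.
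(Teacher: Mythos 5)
Your proof is correct. The paper itself offers no argument for this lemma beyond remarking that part (a) ``follows from inclusion-exclusion'' and citing Grinberg's extended preprint for parts (b) and (c); your M\"obius-inversion derivation of (a) is exactly the inclusion-exclusion the authors have in mind, and your cascading split-by-$k$ and split-by-$k-1$ arguments for (b) and (c) -- with the sign flips coming from the reindexings $B\mapsto B\cup\{k\}$ and $B\mapsto B\cup\{k-1\}$, justified by $k\notin C$ and $k-1\notin C\cup\{0\}$ respectively -- are the standard route and supply cleanly the details the paper leaves to the reference. No gaps.
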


Our present work on kernels of descent statistics only concerns the vector space structure of the quasisymmetric functions, but it is worthwhile to also discuss the ring structure of $\QSym$ as it will allow us to understand the background and motivation for studying these kernels. Let $\pi\in\mathfrak{P}_{m}$ and $\sigma\in\mathfrak{P}_{n}$ be \textit{disjoint} permutations\textemdash that is, they have no letters in common. Then we say that $\tau\in\mathfrak{P}_{m+n}$ is a \textit{shuffle} of $\pi$ and $\sigma$ if both $\pi$ and $\sigma$ are subsequences of $\tau$, and we let $\pi\shuffle\sigma$ denote the set of shuffles of $\pi$ and $\sigma$. For example, given $\pi=13$ and $\sigma=42$, we have 
\[
\pi\shuffle\sigma=\{1342,1432,1423,4132,4123,4213\}.
\]
The product of two fundamental quasisymmetric functions is given by
\begin{equation}
F_{L}F_{K}=\sum_{\tau\in\pi\shuffle\sigma}F_{\Comp\tau}\label{e-Fprod}
\end{equation}
where $\pi$ and $\sigma$ are any disjoint permutations satisfying $\Comp\pi=L$ and $\Comp\sigma=K$. It follows that $\QSym$ is a graded subalgebra of $\mathbb{Q}[[x_{1},x_{2},\dots]]$.

In order for the product formula (\ref{e-Fprod}) to make sense, the multiset $\{\,\Comp\tau:\tau\in\pi\shuffle\sigma\,\}$ must only depend on the descent compositions of $\pi$ and $\sigma$; equivalently, in terms of descent sets, the multiset $\{\,\Des\tau:\tau\in\pi\shuffle\sigma\,\}$ only depends on $\Des\pi$, $\Des\sigma$, and the lengths of $\pi$ and $\sigma$. More generally, we say that a permutation statistic $\st$ is called \textit{shuffle-compatible} if for any disjoint permutations $\pi$ and $\sigma$, the multiset $\{\,\st\tau:\tau\in\pi\shuffle\sigma\,\}$ giving the distribution of $\st$ over $\pi\shuffle\sigma$ depends only on $\st\pi$, $\st\sigma$, and the lengths of $\pi$ and $\sigma$. In other words, (\ref{e-Fprod}) implies that the descent set $\Des$ is a shuffle-compatible permutation statistic, which is implicit in Stanley's theory of $P$-partitions \cite{Stanley1972}; the shuffle-compatibility of the statistics $\des$, $\maj$, and $(\des,\maj)$ follow from Stanley's work as well.

Before proceeding, we note that $\QSym$ additionally has the structure of a dendriform algebra \cite{Grinberg2017a} and a Hopf algebra (and is in fact the terminal object in the category of combinatorial Hopf algebras \cite{Aguiar2006}). The dendriform structure of $\QSym$ is relevant to shuffle-compatibility and the kernels of descent statistics; this connection is not important for our present work but will be touched on in Section \ref{ss-future}.

\subsection{\label{ss-shuf}Shuffle algebras and the kernels \texorpdfstring{$\mathcal{K}^{\protect\st}$}{Kst}}

Motivated by the shuffle-compatibility results implicit in Stanley's work on $P$-partitions, Gessel and Zhuang~\cite{Gessel2018} formalized the notion of a shuffle-compatible permutation statistic and built a framework for investigating this phenomenon centered around the shuffle algebra of a shuffle-compatible statistic. Let us outline this construction below.

We say that permutations $\pi$ and $\sigma$ are $\st$-\textit{equivalent} if $\st\pi=\st\sigma$ and $\left|\pi\right|=\left|\sigma\right|$. We write the $\st$-equivalence class of $\pi$ as $[\pi]_{\st}$. For a shuffle-compatible statistic $\st$, we associate to $\st$ a $\mathbb{Q}$-algebra in the following way. First, associate to $\st$ a $\mathbb{Q}$-vector space by taking as a basis the $\st$-equivalence classes of permutations. We give this vector space a multiplication by taking 
\[
[\pi]_{\st}[\sigma]_{\st}=\sum_{\tau\in\pi\shuffle\sigma}[\tau]_{\st},
\]
which is well-defined if and only if $\st$ is shuffle-compatible. The resulting algebra ${\cal A}^{\st}$ is called the \textit{shuffle algebra} of $\st$. Observe that ${\cal A}^{\st}$ is graded by length\textemdash i.e., $[\pi]_{\st}$ belongs to the $n$th graded component of ${\cal A}^{\st}$ if $\pi$ has length $n$. 

When $\st$ is a descent statistic, the notion of $\st$-equivalence of permutations induces the notion of $\st$-equivalence of compositions as defined in the introduction, so we can think of the basis elements of ${\cal A}^{\st}$ as being $\st$-equivalence classes of compositions. From this perspective, it is evident from the product formula (\ref{e-Fprod}) that $\mathcal{A}^{\Des}$ is isomorphic to $\QSym$ with the basis of $\Des$-equivalence classes corresponding to the fundamental quasisymmetric functions.

The following provides a necessary and sufficient condition for a descent statistic to be shuffle-compatible.

\begin{thm}[{Gessel\textendash Zhuang \cite[Theorem 4.3]{Gessel2018}}] \label{t-gzmain}
A descent statistic $\st$ is shuffle-compatible if and only if there exists a $\mathbb{Q}$-algebra homomorphism $\phi_{\st}\colon\mathrm{QSym}\rightarrow A$, where $A$ is a $\mathbb{Q}$-algebra with basis $\{u_{\alpha}\}$ indexed by $\st$-equivalence classes $\alpha$ of compositions, such that $\phi_{\st}(F_{L})=u_{\alpha}$ whenever $L$ is in the $\st$-equivalence class $\alpha$. When this is the case, the map $u_{\alpha}\mapsto\alpha$ is a $\mathbb{Q}$-algebra isomorphism from $A$ to $\mathcal{A}^{\st}$.
\end{thm}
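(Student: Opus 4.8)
The plan is to prove all three assertions by exploiting the product formula (\ref{e-Fprod}) (which one may read as saying that $F_{L}\mapsto[L]_{\Des}$ identifies $\QSym$ with $\mathcal{A}^{\Des}$). For the forward implication, assume $\st$ is shuffle-compatible, so that $\mathcal{A}^{\st}$ is a well-defined $\mathbb{Q}$-algebra with basis the set of $\st$-equivalence classes of compositions. I would take $A:=\mathcal{A}^{\st}$, $u_{\alpha}:=\alpha$, and define $\phi_{\st}$ on the fundamental basis by $\phi_{\st}(F_{L}):=[L]_{\st}$, extended linearly; this is constant on each $\st$-class by construction and sends $F_{\emptyset}$ to the identity of $\mathcal{A}^{\st}$. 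The one thing to verify is multiplicativity, and for that the plan is to fix disjoint permutations $\pi,\sigma$ with $\Comp\pi=L$ and $\Comp\sigma=K$ and compute
\[
\phi_{\st}(F_{L}F_{K})=\sum_{\tau\in\pi\shuffle\sigma}\phi_{\st}(F_{\Comp\tau})=\sum_{\tau\in\pi\shuffle\sigma}[\tau]_{\st}=[\pi]_{\st}[\sigma]_{\st}=\phi_{\st}(F_{L})\phi_{\st}(F_{K}),
\]
where the first equality is (\ref{e-Fprod}) and the third is the definition of the product in $\mathcal{A}^{\st}$.

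For the converse, assume such a $\phi_{\st}\colon\QSym\to A$ exists, fix disjoint $\pi\in\mathfrak{P}_{m}$ and $\sigma\in\mathfrak{P}_{n}$, set $\alpha:=[\Comp\pi]_{\st}$ and $\beta:=[\Comp\sigma]_{\st}$, and for each $\st$-class $\gamma$ of compositions of $m+n$ let $c_{\gamma}:=\bigl|\{\,\tau\in\pi\shuffle\sigma:[\Comp\tau]_{\st}=\gamma\,\}\bigr|$. Applying $\phi_{\st}$ to (\ref{e-Fprod}) and using that it is an algebra homomorphism gives
\[
u_{\alpha}u_{\beta}=\phi_{\st}(F_{\Comp\pi})\phi_{\st}(F_{\Comp\sigma})=\phi_{\st}(F_{\Comp\pi}F_{\Comp\sigma})=\sum_{\tau\in\pi\shuffle\sigma}\phi_{\st}(F_{\Comp\tau})=\sum_{\gamma}c_{\gamma}u_{\gamma}.
\]
Since $\{u_{\gamma}\}$ is a basis of $A$, the coefficients $c_{\gamma}$ are uniquely recoverable from the product $u_{\alpha}u_{\beta}$, hence depend only on $\alpha$ and $\beta$ — equivalently, only on $\st\pi$, $\st\sigma$, and the lengths $m,n$. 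As all $\tau\in\pi\shuffle\sigma$ have length $m+n$, and among compositions of a fixed size a descent statistic has its $\st$-classes in bijection with its values, the family $(c_{\gamma})_{\gamma}$ is precisely a relabeling of the multiset $\{\,\st\tau:\tau\in\pi\shuffle\sigma\,\}$; therefore $\st$ is shuffle-compatible.

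For the isomorphism, with shuffle-compatibility now established I would define $\psi\colon A\to\mathcal{A}^{\st}$ by $\psi(u_{\alpha}):=\alpha$; this is a linear bijection because both algebras have bases indexed by the $\st$-classes of compositions, it sends $1_{A}=\phi_{\st}(F_{\emptyset})=u_{[\emptyset]_{\st}}$ to $1_{\mathcal{A}^{\st}}$, and it is multiplicative by reusing the displayed computation: taking disjoint representatives $\pi,\sigma$ of $\alpha,\beta$, one has $\psi(u_{\alpha}u_{\beta})=\sum_{\gamma}c_{\gamma}\gamma=[\pi]_{\st}[\sigma]_{\st}=\psi(u_{\alpha})\psi(u_{\beta})$. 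The argument as a whole is a short diagram chase; the only place that needs care is the converse, where one must be precise that ``$c_{\gamma}$ depends only on the $\st$-classes of $\Comp\pi$ and $\Comp\sigma$'' really is equivalent to the defining condition of shuffle-compatibility. This rests on two elementary facts worth stating explicitly: that the multiset $\{\,\Comp\tau:\tau\in\pi\shuffle\sigma\,\}$ of a pair of disjoint permutations depends only on $\Comp\pi$ and $\Comp\sigma$ (which is exactly what makes (\ref{e-Fprod}) well-posed), and the bijection just mentioned between $\st$-classes of compositions of a given size and values of $\st$ on permutations of that length.
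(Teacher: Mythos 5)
The paper cites this theorem from Gessel--Zhuang without reproducing its proof, so there is no internal argument to compare against. Your proposal is correct and follows what is essentially the only natural route: read the product formula \eqref{e-Fprod} as identifying $\QSym$ with $\mathcal{A}^{\Des}$, define $\phi_{\st}$ by sending $F_L$ to the $\st$-class of $L$ in the forward direction, and in the converse push \eqref{e-Fprod} through $\phi_{\st}$ to see that the coefficients $c_\gamma$ are determined by $u_\alpha u_\beta$ alone. The one place that genuinely needs the care you flag is the translation step in the converse, and you handle it correctly: the multiset $\{\,\Comp\tau : \tau\in\pi\shuffle\sigma\,\}$ depends only on $\Comp\pi$ and $\Comp\sigma$ (which is exactly the well-posedness of \eqref{e-Fprod}), and because all $\tau$ have the same length $m+n$, the multiset of $\st$-classes $[\Comp\tau]_{\st}$ is a relabeling of the multiset of values $\st\tau$, while $\alpha,\beta$ encode exactly $(\st\pi,|\pi|)$ and $(\st\sigma,|\sigma|)$. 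The isomorphism claim then follows from the same coefficient computation, as you show. No gaps.
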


It follows from Theorem \ref{t-gzmain} that, whenever $\st$ is a shuffle-compatible descent statistic, the linear map $p_{\st}\colon\QSym\rightarrow\mathcal{A}^{\st}$ sending each $F_{L}$ to the $\st$-equivalence class of $L$ is a $\mathbb{Q}$-algebra homomorphism with the same kernel as $\phi_{\st}$; this implies that $\QSym/\ker p_{\st}$ is isomorphic to $\mathcal{A}^{\st}$ as algebras. Note that when $\st$ is not shuffle-compatible, it still holds that $\QSym/\ker p_{\st}$ and $\mathcal{A}^{\st}$ are isomorphic as vector spaces.

Gessel and Zhuang \cite{Gessel2018} used Theorem \ref{t-gzmain} to give explicit descriptions of the shuffle algebras of a number of descent statistics\textemdash including $\des$, $\maj$, $\Pk$, $\pk$, and $(\pk,\des)$\textemdash which yield algebraic proofs for their shuffle-compatibility. In \cite{Grinberg2018}, Grinberg proved Gessel and Zhuang's conjecture that the exterior peak set $\Epk$ is shuffle-compatible and gave a characterization of its shuffle algebra, introduced a strengthening of shuffle-compatibility called ``LR-shuffle-compatibility'' which is closely related to the dendriform algebra structure of $\QSym$, and initiated the study of the kernels $\mathcal{K}^{\st}$.

Recall that the \textit{kernel} $\mathcal{K}^{\st}$ of a descent statistic $\st$ is the subspace
\[
\mathcal{K}^{\st}=\Span\{\,F_{L}-F_{K}:L\sim_{\st}K\,\}
\]
of $\QSym$. Later on, we shall use the notation $\mathcal{K}_{n}^{\st}$ for the $n$th homogeneous component of $\mathcal{K}^{\st}$, so that
\[
\mathcal{K}_{n}^{\st}=\Span\{\,F_{L}-F_{K}:L\sim_{\st}K\text{ and }L,K\vDash n\,\}
\]
and
\[
\mathcal{K}^{\st}=\bigoplus_{n=0}^{\infty}\mathcal{K}_{n}^{\st}.
\]
It is easy to see that $\mathcal{K}^{\st}$ is precisely the kernel of the linear map $p_{\st}$ defined above, hence the name ``kernel''. The following is then a consequence of Theorem \ref{t-gzmain}.

\begin{thm}[{Grinberg \cite[Proposition 101]{Grinberg2018}}] \label{t-ideal}
A descent statistic $\st$ is shuffle-compatible if and only if $\mathcal{K}^{\st}$ is an ideal of $\QSym$. When this is the case, $\mathcal{A}^{\st}$ is isomorphic to $\QSym/\mathcal{K}^{\st}$ as $\mathbb{Q}$-algebras.
\end{thm}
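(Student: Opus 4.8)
The plan is to derive this entirely from the Gessel--Zhuang criterion (Theorem~\ref{t-gzmain}) together with the already-noted fact that $\mathcal{K}^{\st}$ is the kernel of the linear map $p_{\st}\colon\QSym\to\mathcal{A}^{\st}$ sending $F_{L}$ to $[L]_{\st}$. First I would make precise why $\mathcal{K}^{\st}=\ker p_{\st}$, since this identification carries the whole argument. The inclusion $\mathcal{K}^{\st}\subseteq\ker p_{\st}$ is immediate because $p_{\st}(F_{J}-F_{K})=[J]_{\st}-[K]_{\st}=0$ whenever $J\sim_{\st}K$. For the reverse inclusion, fix $n$ and write an arbitrary element of $\QSym_{n}$ as $\sum_{L\vDash n}c_{L}F_{L}$; since the $\st$-equivalence classes of compositions of $n$ form a basis of the $n$th graded component of $\mathcal{A}^{\st}$, the equation $p_{\st}\bigl(\sum_{L}c_{L}F_{L}\bigr)=0$ is equivalent to $\sum_{L\in\alpha}c_{L}=0$ for every $\st$-equivalence class $\alpha$. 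Choosing a representative $L_{\alpha}$ in each class, this rewrites the element as $\sum_{\alpha}\sum_{L\in\alpha}c_{L}(F_{L}-F_{L_{\alpha}})\in\mathcal{K}^{\st}$. Consequently $p_{\st}$ induces a vector-space isomorphism $\QSym/\mathcal{K}^{\st}\xrightarrow{\;\sim\;}\mathcal{A}^{\st}$, which will be used in both directions.

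For the forward implication, assume $\st$ is shuffle-compatible and let $\phi_{\st}\colon\QSym\to A$ be the algebra homomorphism given by Theorem~\ref{t-gzmain}, with $\phi_{\st}(F_{L})=u_{\alpha}$ for $L\in\alpha$. Composing $\phi_{\st}$ with the algebra isomorphism $A\to\mathcal{A}^{\st}$, $u_{\alpha}\mapsto\alpha$, yields an algebra homomorphism $\QSym\to\mathcal{A}^{\st}$ which sends each $F_{L}$ to $[L]_{\st}$; by comparison on the basis $\{F_{L}\}$ this composite is exactly $p_{\st}$. Hence $p_{\st}$ is an algebra homomorphism, so $\mathcal{K}^{\st}=\ker p_{\st}$ is an ideal of $\QSym$, and the first isomorphism theorem gives $\QSym/\mathcal{K}^{\st}\cong\mathcal{A}^{\st}$ as $\mathbb{Q}$-algebras.

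For the converse, assume $\mathcal{K}^{\st}$ is an ideal, so $\QSym/\mathcal{K}^{\st}$ is a $\mathbb{Q}$-algebra and the canonical surjection $q\colon\QSym\to\QSym/\mathcal{K}^{\st}$ is an algebra homomorphism. I would apply Theorem~\ref{t-gzmain} with $A=\QSym/\mathcal{K}^{\st}$ and $\phi_{\st}=q$. Define $u_{\alpha}\coloneqq q(F_{L})$ for any $L$ in the class $\alpha$; this is well-defined since $F_{L}-F_{L'}\in\mathcal{K}^{\st}$ whenever $L,L'\in\alpha$, and then $q(F_{L})=u_{\alpha}$ holds by construction. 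What remains is to check that the $u_{\alpha}$, as $\alpha$ ranges over $\st$-equivalence classes, form a basis of $\QSym/\mathcal{K}^{\st}$: they span because the $F_{L}$ span $\QSym$, and they are linearly independent because the isomorphism $\QSym/\mathcal{K}^{\st}\xrightarrow{\;\sim\;}\mathcal{A}^{\st}$ from the first paragraph carries $u_{\alpha}=q(F_{L})$ to $[L]_{\st}=\alpha$, and the classes $\alpha$ are a basis of $\mathcal{A}^{\st}$. Theorem~\ref{t-gzmain} then tells us $\st$ is shuffle-compatible and that $u_{\alpha}\mapsto\alpha$ is an algebra isomorphism $\QSym/\mathcal{K}^{\st}\to\mathcal{A}^{\st}$, which reproves the final assertion.

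I expect the only genuinely delicate point to be the bookkeeping in the first paragraph --- namely that $\ker p_{\st}$ is not merely a superset of $\mathcal{K}^{\st}$ but is actually spanned by the differences $F_{J}-F_{K}$ with $J\sim_{\st}K$. This rests on $\mathcal{A}^{\st}$ having the $\st$-equivalence classes of compositions as an honest graded basis (valid for descent statistics, as noted before Theorem~\ref{t-gzmain}), together with the finite-dimensionality of each $\QSym_{n}$; once that is in hand, everything else is a formal consequence of Theorem~\ref{t-gzmain} and the first isomorphism theorem.
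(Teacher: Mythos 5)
Your proof is correct and follows the same route the paper outlines: identify $\mathcal{K}^{\st}$ with $\ker p_{\st}$ and then deduce both directions from the Gessel--Zhuang criterion (Theorem~\ref{t-gzmain}), using the first isomorphism theorem for the forward direction and instantiating Theorem~\ref{t-gzmain} with $A=\QSym/\mathcal{K}^{\st}$, $\phi_{\st}=q$ for the converse. The paper only sketches this deduction and cites Grinberg; your write-up supplies the details, including the graded argument showing $\ker p_{\st}\subseteq\mathcal{K}^{\st}$ and the verification that the classes $q(F_{L})$ form a basis of $\QSym/\mathcal{K}^{\st}$.
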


\subsection{\label{ss-tri}Linear expansions and triangularity}

Lastly, we state a lemma concerning ``invertibly triangular expansions'' which will be used in our proof of Theorem \ref{t-M}. This lemma appears (in a slightly different yet equivalent form) in the Appendix to \cite{Grinberg2020}, which gives a treatment of some fundamental results from linear algebra for matrices whose rows and columns are indexed by arbitrary objects (rather than numbers). In particular, we are interested in the case when both the rows and columns are indexed by elements of a finite poset $S$, such that the matrix is ``invertibly triangular''\textemdash i.e., all the entries $a_{s,s}$ on the ``diagonal'' are invertible and $a_{s,t}=0$ whenever we do not have $t\leq s$. (When working over a field, as we do, the condition that $a_{s,s}$ is invertible is equivalent to $a_{s,s}\neq0$.) Note that this reduces to the typical notion of an invertible lower-triangular $n\times n$ matrix upon taking $S=[n]$.

A \textit{family} $(f_{s})_{s\in S}$ indexed by a set $S$ refers to an assignment of an object $f_{s}$ to each $s\in S$. The objects in a family need not be distinct\textemdash i.e., we may have $f_{s}=f_{t}$ for $s\neq t$. Roughly speaking, a family $(e_{s})_{s\in S}$ can be expanded invertibly triangularly in another family $(f_{s})_{s\in S}$ if we can write the $e_{s}$ as linear combinations of the $f_{s}$ such that the coefficients of these linear combinations form an invertibly triangular matrix in the sense described above. However, rather than giving the formal definition of an invertibly triangular expansion in terms of these generalized matrices, it is easier for our purposes to give the following equivalent definition. 

Given a $\mathbb{Q}$-vector space $V$, a finite poset $S$, and two families $(e_{s})_{s\in S}$ and $(f_{s})_{s\in S}$ of elements of $V$, we say that $(e_{s})_{s\in S}$ \textit{expands invertibly triangularly} in $(f_{s})_{s\in S}$ if, for each $s\in S$, we can write $e_{s}$ as
\begin{equation}
e_{s}=c_{s}f_{s}+\sum_{t>s}c_{t}f_{t} \label{e-invtri}
\end{equation}
for some $c_{t}\in\mathbb{Q}$ with $c_{s}\neq0$. Importantly, two families have the same span if one can be expanded invertibly triangularly in another.

\begin{lem}[{Grinberg\textendash Reiner \cite[Corollary 11.1.19 (b)]{Grinberg2020}}]\label{l-invtri}
Let $V$ be a $\mathbb{Q}$-vector space, $S$ a finite poset, and $(e_{s})_{s\in S}$ and $(f_{s})_{s\in S}$ two families of elements of $V$. If $(e_{s})_{s\in S}$ expands invertibly triangularly in $(f_{s})_{s\in S}$, then $\Span(e_{s})_{s\in S}=\Span(f_{s})_{s\in S}$.
\end{lem}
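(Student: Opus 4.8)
The plan is to prove the two inclusions $\Span(e_{s})_{s\in S}\subseteq\Span(f_{s})_{s\in S}$ and $\Span(f_{s})_{s\in S}\subseteq\Span(e_{s})_{s\in S}$ separately. The first is immediate: the defining relation (\ref{e-invtri}) already expresses each $e_{s}$ as a finite $\mathbb{Q}$-linear combination of the $f_{t}$, so $e_{s}\in\Span(f_{t})_{t\in S}$ for every $s$, and hence $\Span(e_{s})_{s\in S}\subseteq\Span(f_{t})_{t\in S}$.

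For the reverse inclusion it suffices to show that $f_{s}\in\Span(e_{u})_{u\in S}$ for every $s\in S$, and I would do this by downward induction on $S$\textemdash more precisely, by induction on the cardinality of the strict up-set $U_{s}\coloneqq\{\,t\in S:t>s\,\}$, which is a legitimate induction since $S$ is finite (equivalently, the strict order on $S$ is well-founded). Here the scalars $c_{t}$ in (\ref{e-invtri}) should be read as depending on $s$; write $c_{s,t}$ for clarity, so that $c_{s,s}\neq0$. In the base case $U_{s}=\emptyset$, the element $s$ is maximal and (\ref{e-invtri}) reduces to $e_{s}=c_{s,s}f_{s}$, whence $f_{s}=c_{s,s}^{-1}e_{s}\in\Span(e_{u})_{u\in S}$. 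For the inductive step, observe that $t>s$ forces $U_{t}\subsetneq U_{s}$ (indeed $U_{t}\subseteq U_{s}$ by transitivity, while $t\in U_{s}\setminus U_{t}$), so the induction hypothesis applies to each $t\in U_{s}$ and gives $f_{t}\in\Span(e_{u})_{u\in S}$. Rearranging (\ref{e-invtri}) as
\[
f_{s}=c_{s,s}^{-1}\Bigl(e_{s}-\sum_{t>s}c_{s,t}f_{t}\Bigr)
\]
and using that $e_{s}$ and each $f_{t}$ with $t\in U_{s}$ lie in $\Span(e_{u})_{u\in S}$ then yields $f_{s}\in\Span(e_{u})_{u\in S}$, completing the induction. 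Combining the two inclusions gives $\Span(e_{s})_{s\in S}=\Span(f_{s})_{s\in S}$.

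There is no substantial obstacle in this argument; the only point requiring care is the setup of the downward induction and the verification that it terminates, which is exactly where finiteness of $S$ is used. (Alternatively, since the statement is literally Corollary 11.1.19(b) of \cite{Grinberg2020}, one may simply invoke it.)
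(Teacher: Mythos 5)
Your argument is correct. Note, however, that the paper does not actually prove this lemma: it is simply cited to Grinberg--Reiner \cite[Corollary 11.1.19 (b)]{Grinberg2020}, with a remark that the paper's definition of ``expands invertibly triangularly'' reverses the poset order relative to that reference. Your self-contained proof by downward induction on $\lvert U_s\rvert$ is the standard argument one would give for this fact, and it is sound: the forward inclusion is immediate from \eqref{e-invtri}, and the reverse inclusion follows from the finiteness of $S$ via the observation that $t>s$ forces $U_t\subsetneq U_s$, which makes the induction terminate. One small caveat on your parenthetical closing remark: the statement is not \emph{literally} Corollary 11.1.19(b) of the reference, since the triangularity convention there is the opposite one ($t<s$ rather than $t>s$); to invoke the reference one must pass to the opposite poset, as the paper itself points out just after the lemma. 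But since you also supply a direct proof, this does not affect the correctness of your write-up.
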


We note that our definition of ``expands invertibly triangularly'' is the opposite of that in \cite{Grinberg2020}---i.e., \cite{Grinberg2020} has $t<s$ in place of $t>s$ in \eqref{e-invtri}---but it is clear that Lemma \ref{l-invtri} still holds as we can simply reverse the order of the poset $S$.

\section{\label{s-fun}Characterizations in terms of the fundamental basis}

The purpose of this section is to provide proofs for Theorems \ref{t-Fsst} and \ref{t-F}. In proving Theorem~\ref{t-Fsst}, we will first establish a couple general results for arbitrary vector spaces.

\subsection{\label{ss-Fsst}Proof of Theorem \ref{t-Fsst}}

Throughout this section, fix a field $\Bbbk$ of characteristic $\neq 2$ and let $V$ be a vector space over $\Bbbk$ with basis $\{u_{s}\}_{s\in I}$ (where $I$ is an index set for the basis). Let $\sim$ be an equivalence relation on $I$, and let 
\[
R\coloneqq\{\,(s,t)\in I^{2}:s\sim t\,\}\quad\text{and}\quad\mathcal{U}\coloneqq\{\,u_{s}-u_{t}:s\sim t\,\}.
\]
Consider the subspace $W$ of $V$ spanned by the set $\mathcal{U}$. Note that every subset of $\mathcal{U}$ can be written as 
\[
\mathcal{U}_{S}\coloneqq\{\,u_{s}-u_{t}:(s,t)\in S\,\}
\]
for some $S\subseteq R$. We associate to $S$ a directed graph $G_{S}$ with vertex set $I$ and edge set $S$\textemdash i.e., there is an edge from $s$ to $t$ if and only if $(s,t)\in S$. As a special case, let $G\coloneqq G_{R}$, so the connected components of $G$ are simply the equivalence classes of $I$ under $\sim$. More generally, the connected components of $G_{S}$ refine the equivalence classes of $I$.

\begin{lem} \label{l-ccequiv}
Let $S\subseteq R$. If $s,t\in I$ are in the same connected component of $G_{S}$, then $s\sim t$.
\end{lem}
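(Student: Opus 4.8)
The plan is to unwind the definition of a connected component and use that every edge of $G_{S}$ joins two $\sim$-equivalent vertices. Concretely, suppose $s$ and $t$ lie in the same connected component of $G_{S}$. Since a connected component of a directed graph is, by the convention fixed earlier in the paper, a connected component of the underlying undirected graph, there is a walk $s=v_{0},v_{1},\dots,v_{k}=t$ in that underlying undirected graph; that is, for each $i\in\{0,1,\dots,k-1\}$ we have $(v_{i},v_{i+1})\in S$ or $(v_{i+1},v_{i})\in S$.

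Next I would observe that each such step keeps us within a single $\sim$-class: because $S\subseteq R$, either possibility gives $v_{i}\sim v_{i+1}$, using the symmetry of $\sim$ to handle the case $(v_{i+1},v_{i})\in S$. Chaining these relations via the transitivity of $\sim$ then yields $s=v_{0}\sim v_{1}\sim\cdots\sim v_{k}=t$, hence $s\sim t$. If one prefers to be fully formal, this chaining is a one-line induction on the walk length $k$: the base case $k=0$ is reflexivity of $\sim$, and the inductive step combines $s\sim v_{k-1}$ with $v_{k-1}\sim v_{k}$ by transitivity.

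There is no genuine obstacle here — the statement is essentially a repackaging of the inclusion $S\subseteq R$ together with the defining properties (reflexivity, symmetry, transitivity) of the equivalence relation $\sim$. The only point demanding a modicum of care is the directed-versus-undirected distinction: an edge of $G_{S}$ is an ordered pair in $S$, whereas connectivity is measured in the underlying undirected graph, so one must allow edges to be traversed ``backwards,'' which is precisely where symmetry of $\sim$ enters. (For the converse direction, which is what will actually be needed together with this lemma in the proof of Theorem~\ref{t-Fsst}, one instead needs the hypothesis that the connected components of $G_{S}$ are \emph{exactly} the $\sim$-classes; that is a separate assumption and is not claimed here.)
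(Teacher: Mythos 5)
Your proof is correct and follows essentially the same route as the paper's: take a walk in the underlying undirected graph between $s$ and $t$, note that each edge (traversed in either direction, using symmetry of $\sim$) gives $v_i \sim v_{i+1}$ because $S \subseteq R$, and chain by transitivity. The paper's proof is just slightly terser and does not spell out the induction.
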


\begin{proof}
Let $s,t\in I$ be in the same connected component of $G_{S}$. Then there is a sequence 
\[
s=s_{0}\leftrightarrow s_{1}\leftrightarrow\cdots\leftrightarrow s_{k}=t
\]
where $s_{i-1}\leftrightarrow s_{i}$ means that there is an edge in either direction (not necessarily both) between $s_{i-1}$ and $s_{i}$. As a result, we have either $(s_{i-1},s_{i})\in S$ or $(s_{i},s_{i-1})\in S$ for each $i\in[k]$, which means that $s_{i-1}\sim s_{i}$ for each $i\in[k]$ and therefore $s\sim t$ by transitivity.
\end{proof}

\begin{lem}
\label{l-dg}Let $s,t\in I$ and $S\subseteq R$. Then $u_{s}-u_{t}\in\Span\mathcal{U}_{S}$ if and only if $s$ and $t$ are in the same connected component of $G_{S}$. 
\end{lem}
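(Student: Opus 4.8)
The statement is an ``if and only if'' between a linear-algebraic condition ($u_s - u_t \in \Span \mathcal{U}_S$) and a graph-theoretic condition (same connected component of $G_S$), so the plan is to prove the two implications separately. The easy direction is ``$\Leftarrow$'': if $s$ and $t$ lie in the same connected component, there is a walk $s = s_0 \leftrightarrow s_1 \leftrightarrow \cdots \leftrightarrow s_k = t$ in the underlying undirected graph. For each step, either $(s_{i-1}, s_i) \in S$ or $(s_i, s_{i-1}) \in S$, so $\pm(u_{s_{i-1}} - u_{s_i}) \in \mathcal{U}_S$ in either case; summing the telescoping identity $u_s - u_t = \sum_{i=1}^k (u_{s_{i-1}} - u_{s_i})$ shows $u_s - u_t \in \Span \mathcal{U}_S$. (Note this uses nothing about the characteristic.)

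For the harder direction ``$\Rightarrow$'', the plan is to exhibit a linear functional, or more concretely a way of reading off ``which component'' from a vector, that is constant on $\Span \mathcal{U}_S$. Concretely, let $P_1, P_2, \dots$ be the connected components of $G_S$ (which partition $I$), and for each component $P$ define the linear functional $\varphi_P \colon V \to \Bbbk$ by $\varphi_P(u_s) = 1$ if $s \in P$ and $\varphi_P(u_s) = 0$ otherwise, extended linearly. Every generator $u_a - u_b$ of $\mathcal{U}_S$ has $a, b$ in the same component (since $(a,b) \in S$ forces an edge, hence $a \leftrightarrow b$), so $\varphi_P(u_a - u_b) = 0$ for every component $P$; hence $\varphi_P$ vanishes on all of $\Span \mathcal{U}_S$. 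Now if $u_s - u_t \in \Span \mathcal{U}_S$ but $s$ and $t$ were in different components, picking $P$ to be the component containing $s$ gives $\varphi_P(u_s - u_t) = 1 - 0 = 1 \neq 0$ (here we use $1 \neq 0$ in $\Bbbk$, which is automatic), a contradiction. Therefore $s$ and $t$ lie in the same component.

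I expect the routine bookkeeping — checking that the $\varphi_P$ are well-defined (which rests on the $\{u_s\}_{s \in I}$ being a basis, so coefficients are unambiguous) and that each generator of $\mathcal{U}_S$ really does join two vertices of a common component — to be the only thing to verify carefully; there is no genuine obstacle. It is worth remarking that this lemma, unlike the linear-independence half of the surrounding development, does not need the hypothesis $\operatorname{char} \Bbbk \neq 2$; that restriction will only be needed when relating $\mathcal{U}_S$ being a basis of $W$ to $G_S$ being a forest, where cycles of even length interact with the characteristic. Combined with Lemma \ref{l-ccequiv}, this lemma is exactly the tool needed to prove part (a) of Theorem \ref{t-Fsst}: $\mathcal{K}^{\st}$ is spanned by $\mathcal{F}_S^{\st}$ iff $F_J - F_K \in \Span \mathcal{F}_S^{\st}$ for all $J \sim_{\st} K$, iff (by Lemma \ref{l-dg}) every $\st$-equivalence class is contained in a single connected component of $G_S$, iff (with Lemma \ref{l-ccequiv} giving the reverse containment) the connected components coincide with the $\st$-equivalence classes.
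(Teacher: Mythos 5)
Your proof is correct and takes essentially the same approach as the paper: the easy direction is the same telescoping-sum argument along a walk, and the hard direction uses the same linear functional (your $\varphi_P$ with $P$ the component of $s$ is exactly the paper's $f_s$, which sends $u_r\mapsto 1$ if $r$ is in the same component as $s$ and to $0$ otherwise). Your side remark that $\operatorname{char}\Bbbk\neq 2$ is not needed for this lemma is also accurate.
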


\begin{proof}
Suppose that $s$ and $t$ are in the same connected component of $G_{S}$. Then, as in the proof of Lemma \ref{l-ccequiv}, there is a sequence
\[
s=s_{0}\leftrightarrow s_{1}\leftrightarrow\cdots\leftrightarrow s_{k}=t
\]
where $s_{i-1}\leftrightarrow s_{i}$ means that there is an edge in either direction (not necessarily both) between $s_{i-1}$ and $s_{i}$. For each $i\in[k]$, we have $s_{i-1}\leftrightarrow s_{i}$ which means that $u_{s_{i-1}}-u_{s_{i}}\in\mathcal{U}_{S}$ or $u_{s_{i}}-u_{s_{i-1}}\in\mathcal{U}_{S}$. Thus $u_{s_{i-1}}-u_{s_{i}}\in\Span\mathcal{U}_{S}$ for each $i\in[k]$, which implies 
\[
u_{s}-u_{t}=u_{s_{0}}-u_{s_{k}}=\sum_{i=1}^{k}(u_{s_{i-1}}-u_{s_{i}})\in\Span\mathcal{U}_{S}.
\]

Conversely, suppose that $u_{s}-u_{t}\in\Span\mathcal{U}_{S}$. Let
$f_{s}\colon V\rightarrow\Bbbk$ be the linear map defined by 
\[
f_{s}(u_{r})=\begin{cases}
1, & \text{if }s\text{ and }r\text{ are in the same connected component of }G_{S},\\
0, & \text{otherwise}.
\end{cases}
\]
Recall that if $u_{p}-u_{q}\in\mathcal{U}_{S}$, then $p$ and $q$ are joined by an edge, which means $f_{s}(u_{p})=f_{s}(u_{q})$; after all, $s$ is either in the same connected component of $G_{S}$ as $p$ and $q$ or it is not. Thus, all of the spanning vectors $u_{p}-u_{q}$ of $\mathcal{U}_{S}$ belong to the kernel of $f_{s}$, so $\Span\mathcal{U}_{S} \subseteq \ker f_{s}$. Since $u_{s}-u_{t}\in\Span\mathcal{U}_{S}$, it follows that $u_{s}-u_{t}\in\ker f_{s}$. Hence, $f_{s}(u_{t})=f_{s}(u_{s})=1$ which concludes the proof.
\end{proof}

\begin{thm} \label{t-span}
Let $S\subseteq R$. Then $W$ is spanned by $\,\mathcal{U}_{S}$ if and only if the connected components of $G_{S}$ are precisely the connected components of $G$ \textup{(}as sets of vertices, not as subgraphs\textup{)}.
\end{thm}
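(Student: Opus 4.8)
The plan is to prove both directions using the machinery already built in Lemmas \ref{l-ccequiv} and \ref{l-dg}. The key observation is that $W = \Span\mathcal{U} = \Span\mathcal{U}_R$, so the statement ``$W$ is spanned by $\mathcal{U}_S$'' is equivalent to ``$\Span\mathcal{U}_S = \Span\mathcal{U}_R$,'' and Lemma \ref{l-dg} translates membership in either span into a statement about connected components of $G_S$ and $G$ respectively.

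First I would prove the ``if'' direction. Suppose the connected components of $G_S$ coincide (as vertex sets) with those of $G$. To show $W \subseteq \Span\mathcal{U}_S$, it suffices to show each generator $u_s - u_t$ of $W$ (with $s \sim t$) lies in $\Span\mathcal{U}_S$. Since $s \sim t$, the vertices $s$ and $t$ lie in the same connected component of $G = G_R$; by hypothesis they then lie in the same connected component of $G_S$, so Lemma \ref{l-dg} gives $u_s - u_t \in \Span\mathcal{U}_S$. The reverse inclusion $\Span\mathcal{U}_S \subseteq W$ is immediate since $S \subseteq R$, so $\mathcal{U}_S \subseteq \mathcal{U}$. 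Hence $W = \Span\mathcal{U}_S$.

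For the ``only if'' direction, suppose $W = \Span\mathcal{U}_S$. We already noted (in the paragraph preceding the theorem) that the connected components of $G_S$ refine those of $G$, so it remains to show they are not strictly finer — i.e., any two vertices in the same component of $G$ are in the same component of $G_S$. Take $s, t$ in the same connected component of $G$; then $s \sim t$, so $u_s - u_t \in \mathcal{U} \subseteq W = \Span\mathcal{U}_S$. Applying Lemma \ref{l-dg} again, this forces $s$ and $t$ into the same connected component of $G_S$. Combined with the refinement direction, the two component partitions agree as partitions of $I$ into vertex sets, which is exactly the assertion.

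I do not anticipate a genuine obstacle here: the real content has been front-loaded into Lemma \ref{l-dg}, whose converse direction (the functional $f_s$ argument) is the one nontrivial step, and that lemma is already proved. The only point requiring a little care is keeping straight that ``connected components are precisely the same'' means equality of the induced partitions of the vertex set $I$ — not equality as subgraphs — which the theorem statement explicitly flags; making sure both the refinement observation and its converse are invoked cleanly is the main thing to get right.
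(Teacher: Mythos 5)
Your proposal is correct and follows essentially the same route as the paper's proof: both directions reduce to Lemma \ref{l-dg}, with the observation that $G_S$'s components always refine $G$'s taking care of one inclusion in each direction. Your explicit mention that $\Span\mathcal{U}_S\subseteq W$ is automatic since $S\subseteq R$ is a tidying-up step the paper leaves implicit, but the substance is identical.
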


\begin{proof}
Suppose that the connected components of $G_{S}$ are those of $G$. Fix $s,t\in I$ satisfying $s\sim t$. Then $s$ and $t$ belong to the same connected component of $G$ and thus the same connected component of $G_{S}$, so $u_{s}-u_{t}\in\Span\mathcal{U}_{S}$ by Lemma \ref{l-dg}. Since $u_{s}-u_{t}$ was arbitrarily taken from $\mathcal{U}$, which spans $W$, it follows by linearity that $W$ is spanned by $\mathcal{U}_{S}$.

Conversely, suppose that $W$ is spanned by $\mathcal{U}_{S}$. To show that $G_{S}$ and $G$ have the same connected components, it suffices to show that for any $s,t\in I$, if $s$ and $t$ are in the same connected component of $G$ then they are in the same connected component of $G_{S}$. As such, fix $s,t\in I$ which belong to the same connected component of $G$. Then $s\sim t$, and so $u_{s}-u_{t}\in\mathcal{U}\subseteq W=\Span\mathcal{U}_{S}$. By Lemma \ref{l-dg}, $s$ and $t$ are in the same connected component of $G_{S}$, and we are done.
\end{proof}

\begin{thm} \label{t-li}
Let $S\subseteq R$. Then $\mathcal{U}_{S}$ is linearly independent if and only if $G_{S}$ is a forest.
\end{thm}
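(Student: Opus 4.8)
The plan is to prove both directions by relating linear (in)dependence of $\mathcal{U}_{S}$ to cycles in the underlying undirected graph of $G_{S}$, which is the standard correspondence between the cycle space of a graph and linear relations among "edge vectors" $u_{s}-u_{t}$. First I would handle the contrapositive of one direction: if $G_{S}$ is \emph{not} a forest, then its underlying undirected graph contains a cycle, say on vertices $s_{0},s_{1},\dots,s_{k}=s_{0}$ with each consecutive pair joined by an edge of $S$ (in one direction or the other). For each edge traversed as $s_{i-1}\leftrightarrow s_{i}$, the corresponding element of $\mathcal{U}_{S}$ is $\pm(u_{s_{i-1}}-u_{s_{i}})$; summing these signed elements around the cycle telescopes to $0$. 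Since the edges of a cycle are distinct and $\operatorname{char}\Bbbk\neq 2$ (so the signs $\pm1$ are nonzero), this exhibits a nontrivial linear dependence among the elements of $\mathcal{U}_{S}$, so $\mathcal{U}_{S}$ is linearly dependent.

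For the converse, suppose $G_{S}$ is a forest; I want to show $\mathcal{U}_{S}$ is linearly independent. The cleanest approach is to root each tree of the forest and pick, for each non-root vertex, a preferred parent edge, then argue that a vanishing linear combination $\sum_{(s,t)\in S}c_{(s,t)}(u_{s}-u_{t})=0$ forces all $c_{(s,t)}=0$ by peeling off leaves: a leaf vertex $v$ of some tree appears in exactly one edge $e$ of $S$, and $u_{v}$ appears in no other term of the combination (it only occurs in $\pm(u_{v}-u_{w})$ for the unique edge $e$ at $v$); comparing coefficients of $u_{v}$ in the basis $\{u_{s}\}_{s\in I}$ gives $c_{e}=0$. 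Deleting $v$ and $e$ leaves a smaller forest, and the same argument repeats; since $G_{S}$ has finitely many edges in any given relation — or, to be careful, since any linear combination involves only finitely many edges, we may restrict to the finite subforest they span — induction on the number of edges finishes it.

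An alternative, perhaps slicker, route for the converse is to invoke the graph-theoretic fact that a forest on edge set $S$ has exactly $|I|-(\#\text{components of }G_{S})$ edges while the dimension of $\Span\mathcal{U}_{S}$ is at most $|I|-(\#\text{components})$ — indeed, by Lemma~\ref{l-dg} the space $\Span\mathcal{U}_{S}$ is spanned by the vectors $u_{s}-u_{r}$ as $r$ ranges over a fixed representative in the component of $s$, giving exactly $|I|-(\#\text{components})$ spanning vectors that are easily seen to be independent — so $|\mathcal{U}_{S}| = \dim\Span\mathcal{U}_{S}$ and hence $\mathcal{U}_{S}$ is independent. Whichever route, I expect the \textbf{main obstacle} to be bookkeeping rather than ideas: one must be careful that edges of $G_{S}$ are genuinely distinct pairs (so that no element of $\mathcal{U}_{S}$ is repeated or trivially zero — note $(s,s)\notin S$ is not assumed, but $u_s-u_s=0$ would only help), that the cycle in the "not a forest" case can be taken with all edges distinct, and that the characteristic-$\neq 2$ hypothesis is what guarantees the telescoping sum around a cycle is a \emph{nontrivial} relation (it rules out, e.g., a single edge traversed "back and forth" collapsing). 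The leaf-peeling induction should then go through routinely. Finally, translating back to Theorem~\ref{t-Fsst}(b) is immediate: take $V=\QSym$, $I=\mathcal{C}$, $u_L=F_L$, $\sim\,=\,\sim_{\st}$, so that $\mathcal{U}_S=\mathcal{F}_S^{\st}$ and $G_S$ is exactly the graph defined in the statement.
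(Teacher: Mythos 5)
Your argument for the ``not a forest $\Rightarrow$ dependent'' direction matches the paper's: exhibit an undirected cycle, sum the signed edge-vectors around it so that they telescope to $0$, and observe this gives a nontrivial relation among elements of $\mathcal{U}_{S}$. (The distinctness/multi-edge bookkeeping you flag is a real subtlety, but the paper handles it at the same level of informality, saying only that each vector ``belongs to $\mathcal{U}_{S}$ up to sign.'') For the ``forest $\Rightarrow$ independent'' direction, however, your leaf-peeling induction (and the alternative edge-count-versus-dimension argument) is correct but genuinely different from what the paper does. The paper instead reuses Lemma~\ref{l-dg}: assume toward contradiction that $\mathcal{U}_{S}$ is dependent, so some $u_{s}-u_{t}$ with $(s,t)\in S$ lies in $\Span\mathcal{U}_{S'}$ for $S'=S\setminus\{(s,t)\}$; Lemma~\ref{l-dg} applied to $S'$ gives an undirected path from $s$ to $t$ in $G_{S'}$, which together with the edge $(s,t)$ yields a cycle in $G_{S}$, contradicting that $G_{S}$ is a forest. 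Your leaf-peeling is more elementary and self-contained (what one would do without Lemma~\ref{l-dg}); the paper's route is shorter given the machinery already built, and avoids the explicit finiteness reduction that your induction needs. Both approaches rely, implicitly or explicitly, on passing to the finite subgraph supporting a given relation, since $I=\mathcal{C}$ is infinite; you are the more careful about stating this.
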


\begin{proof}
Suppose that $G_{S}$ is a forest, and assume toward contradiction that $\mathcal{U}_{S}$ is linearly dependent. Then there exists $(s,t)\in S$ for which $u_{s}-u_{t}$ is a linear combination of other elements of $\mathcal{U}_{S}$. Let $S^{\prime}\coloneqq S\backslash\{(s,t)\}$, so that $u_{s}-u_{t}\in\Span\mathcal{U}_{S^{\prime}}$. Applying Lemma \ref{l-dg} to $S^{\prime}$, it follows that there is a path from $s$ to $t$ in the underlying undirected graph of $G_{S^{\prime}}$. Now, recall that there is an edge between $s$ and $t$ in $G_{S}$ because $(s,t)\in S$; combining this edge with the path from $s$ to $t$ in $G_{S^{\prime}}$ yields a cycle in $G_{S}$, which contradicts $G_{S}$ being a forest. Therefore, $\mathcal{U}_{S}$ is linearly independent.

Conversely, suppose that $G_{S}$ is not a forest. Then the undirected graph $G_{S}$ has a cycle $(s_{1},s_{2},\dots,s_{k})$, so each of the vectors 
\[
u_{s_{1}}-u_{s_{2}},u_{s_{2}}-u_{s_{3}},\dots,u_{s_{k-1}}-u_{s_{k}},u_{s_{k}}-u_{s_{1}}
\]
belongs to $\mathcal{U_{S}}$ up to sign (either it belongs to $\mathcal{U_{S}}$ or its negative does). However, the sum of these vectors is equal to 0, so $\mathcal{U}_{S}$ is linearly dependent.
\end{proof}

We can now recover Theorem \ref{t-Fsst} from Theorems \ref{t-span} and \ref{t-li}.

\begin{proof}[Proof of Theorem \ref{t-Fsst}]
Take $\Bbbk=\mathbb{Q}$, $V=\QSym$, $I=\mathcal{C}$, the basis $\{u_{s}\}$ to be the fundamental basis of $\QSym$, and $\sim$ to be $\st$-equivalence of compositions. Part (a) then follows from Theorem \ref{t-span}, and part (b) from Theorem \ref{t-li}.
\end{proof}

\subsection{\label{ss-FPk}Application to the peak set\textemdash proof of Theorem \ref{t-F} (a)}

We shall now apply Theorem \ref{t-Fsst} to obtain spanning sets for the kernels $\mathcal{K}^{\Pk}$ and $\mathcal{K}^{\pk}$ as given in Theorem \ref{t-F}. Let us begin with the peak set statistic. Recall the notations $\rightarrow_{1}$ and $\rightarrow_{2}$ defined in the introduction; for convenience, let us write $J\rightarrow_{\Pk}K$ if $J\rightarrow_{1}K$ or $J\rightarrow_{2}K$.

\begin{lem} \label{l-Pkequiv}
If $J\rightarrow_{\Pk}K$, then $J$ and $K$ are $\Pk$-equivalent.
\end{lem}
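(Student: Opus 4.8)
The plan is to verify directly that $J \to_{\Pk} K$ implies $|J| = |K|$ and $\Pk J = \Pk K$, handling the two cases $\to_1$ and $\to_2$ separately. In both cases the equality of lengths is immediate: for $\to_1$, replacing $j_l$ by the two parts $1, j_l - 1$ preserves the total sum, and for $\to_2$, replacing the final part $2$ by $1,1$ does the same. So the substance is in showing $\Pk J = \Pk K$, for which I would invoke Lemma \ref{l-PkpkL}(a), which describes $\Pk L$ as the set of partial sums $\sum_{i=1}^k j_i$ over non-final indices $k$ with $j_k \geq 2$.

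For the case $J \to_1 K$, write $J = (j_1, \dots, j_m)$ with $j_l > 2$ and $K = (j_1, \dots, j_{l-1}, 1, j_l - 1, j_{l+1}, \dots, j_m)$. I would track how the partial sums and the ``$\geq 2$'' conditions change. The partial sums of $K$ up to index $l-1$ agree with those of $J$, and the parts there are unchanged, so those indices contribute identically to both peak sets. In $K$, the new part in position $l$ is $1$ (which is $< 2$, contributing nothing), and the new part in position $l+1$ is $j_l - 1 \geq 2$ with partial sum $j_1 + \cdots + j_{l-1} + 1 + (j_l - 1) = j_1 + \cdots + j_l$, matching the contribution of index $l$ in $J$ (here one must note $j_l \geq 2$, so index $l$ does contribute in $J$, and that the shift by one in position does not affect whether the index is ``non-final,'' since $l \le m-1$ in $J$ iff the corresponding index $l+1 \le m$ in $K$ is non-final — both hold exactly when $l < m$, and if $l = m$ then $j_l$ is the final part and contributes to neither). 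Finally the parts after position $l$ are shifted by one position but their partial sums are unchanged, so they contribute identically. Hence $\Pk J = \Pk K$.

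For the case $J \to_2 K$, we have $j_m = 2$ and $K = (j_1, \dots, j_{m-1}, 1, 1)$. Here $j_m = 2$ is the final part of $J$, so by Lemma \ref{l-PkpkL}(a) it contributes nothing to $\Pk J$; likewise the two trailing $1$'s of $K$ are either final or contribute nothing since they are $< 2$. The parts $j_1, \dots, j_{m-1}$ and their partial sums are identical in $J$ and $K$, and all such indices are non-final in both compositions, so $\Pk J = \Pk K$.

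I do not anticipate a genuine obstacle here; the lemma is essentially bookkeeping on Lemma \ref{l-PkpkL}(a). The one point requiring a little care is the edge behavior at the end of the composition — making sure that the index being modified is non-final exactly when it needs to be, and that the parts set equal to $1$ never accidentally create or destroy a peak — but this is handled by the explicit partial-sum description and the observation that parts equal to $1$ are invisible to $\Pk$.
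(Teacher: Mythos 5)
Your proof is correct and follows the same approach as the paper: both reduce to Lemma \ref{l-PkpkL}(a) and observe that the splits $j_l \mapsto (1, j_l - 1)$ and $2 \mapsto (1,1)$ leave the set of partial sums at parts $\geq 2$ unchanged. The paper's proof is terser (it simply notes that $j_l - 1 \geq 2$ and declares the $\rightarrow_2$ case similar), while you spell out the index-shifting and non-finality bookkeeping explicitly, including the $l = m$ edge case; that extra care is sound and does not change the argument.
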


\begin{proof}
It is immediate from the definitions of $\rightarrow_{1}$ and $\rightarrow_{2}$ that $J\rightarrow_{\Pk}K$ implies $\left|J\right|=\left|K\right|$. So, it remains to show that $J\rightarrow_{\Pk}K$ implies $\Pk J=\Pk K$. Suppose that $J\rightarrow_{1}K$, so that there exists $l\in[m]$ for which $j_{l}>2$ and 
\[
K=(j_{1},\dots,j_{l-1},1,j_{l}-1,j_{l+1},\dots,j_{m}).
\]
Since $j_{l}>2$, we have that $j_{l}-1\geq2$ and thus $\Pk J=\Pk K$ by Lemma \ref{l-PkpkL} (a). The proof for the case $J\rightarrow_{2}K$ is similar.
\end{proof}

We are now ready to prove Theorem \ref{t-F} (a).

\begin{proof}[Proof of Theorem \ref{t-F} \textup{(}a\textup{)}]

We shall apply Theorem \ref{t-Fsst} (a) to $\st=\Pk$. By Lemma \ref{l-Pkequiv}, we may take $S=\{\,(J,K):J\rightarrow_{\Pk}K\,\}$. We seek to show that $J\sim_{\Pk}K$ if and only if $J$ and $K$ are in the same connected component of $G_{S}$, and in light of Lemma \ref{l-ccequiv}, it remains to prove the forward direction.

Suppose that $J\sim_{\Pk}K$. Let $J_{\alpha}$ be the composition obtained from $J$ by replacing each part $j_{l}>2$ with the parts of the composition $(1^{j_{l}-2},2)\vDash j_{l}$. For example, if $J=(3,2,4,1)$ then $J_{\alpha}=(1,2,2,1,1,2,1)$. Then there is a sequence of compositions satisfying
\[
J\rightarrow_{1}J_{1}\rightarrow_{1}J_{2}\rightarrow_{1}\cdots\rightarrow_{1}J_{\alpha}.
\]
Continuing the example above, we have 
\[
J=(3,2,4,1)\rightarrow_{1}(1,2,2,4,1)\rightarrow_{1}(1,2,2,1,3,1)\rightarrow_{1}(1,2,2,1,1,2,1)=J_{\alpha}.
\]
It follows that $J$ and $J_{\alpha}$ are in the same connected component of $G_{S}$, and similarly with $K$ and $K_{\alpha}$\@.

Next, if 2 is the final part of $J_{\alpha}$, then let $J_{\beta}$ be the composition obtained from $J_{\alpha}$ by replacing the final part 2 with two 1s. For instance, if $J_{\alpha}=(1,2,1,2,2)$ then $J_{\beta}=(1,2,1,2,1,1)$. Otherwise, if $J_{\alpha}$ ends with a 1, then set $J_{\beta}\coloneqq J_{\alpha}$. Note that either $J_{\alpha}=J_{\beta}$ or $J_{\alpha}\rightarrow_{2}J_{\beta}$; either way, $J_{\alpha}$ is in the same connected component as $J_{\beta}$, and thus $J$ is as well. The same holds for $K$ and $K_{\beta}$. We then have $J_{\beta}\sim_{\Pk}J\sim_{\Pk}K\sim_{\Pk}K_{\beta}$ by Lemma \ref{l-ccequiv}, so $\Pk J_{\beta}=\Pk K_{\beta}$ and $\left|J_{\beta}\right|=\left|K_{\beta}\right|$.

We claim that $J_{\beta}=K_{\beta}$; this will imply that $J$ and $K$ are in the same connected component. Assume by contradiction that $J_{\beta}\neq K_{\beta}$. Let $l$ be the position of the first part where $J_{\beta}$ and $K_{\beta}$ differ. Since $J_{\beta}$ and $K_{\beta}$ have all parts 1 or 2, we may assume without loss of generality that the $l$th part of $J_{\beta}$ is a 1 and that the $l$th part of $K_{\beta}$ is a 2. Note that the $l$th part of $J_{\beta}$ or $K_{\beta}$ cannot be its final part; after all, $K_{\beta}$ cannot end with a 2 by construction, and if the $l$th part of $J_{\beta}$ is its final part then we would have $\left|J_{\beta}\right|<\left|K_{\beta}\right|$. Thus, Lemma \ref{l-PkpkL} (a) implies that the sum of the first $l$ parts of $K_{\beta}$ is an element of $\Pk K_{\beta}$ but not of $\Pk J_{\beta}$, which contradicts $\Pk J_{\beta}=\Pk K_{\beta}$. Therefore, $J_{\beta}=K_{\beta}$.

We have shown that the connected components of $G_{S}$ are precisely the $\Pk$-equivalence classes of compositions; hence the result follows from Theorem \ref{t-Fsst} (a).
\end{proof}

The spanning set for $\mathcal{K}^{\Pk}$ provided by Theorem \ref{t-F} (a) is not a basis for $\mathcal{K}^{\Pk}$, as the corresponding directed graph is not a forest. For example, upon revisiting Figure \ref{f-arrows}, we see that there is a cycle formed by the relations
\[
(3,2)\rightarrow_{1}(1,2,2)\rightarrow_{2}(1,2,1,1) \,\,_{1}\!\!\leftarrow(3,1,1) \,\,_{2} \!\!\leftarrow(3,2).
\]
One way that we can obtain a basis from Theorem \ref{t-F} (a) is to specify that we must ``split'' the first entry that we are able to. Then, for example, we no longer have $(3,2)\rightarrow_{2}(3,1,1)$ because we are forced to split the first part, leading to $(1,2,2)$. More formally, we have the following: Given compositions $J=(j_{1},j_{2},\dots,j_{m})$ and $K$, let us write $J\rightarrowtriangle_{\Pk}K$ if 
\[
l=\min\{\,i:j_{i}>2,\text{ or }i=m\text{ and }j_{m}=2\,\}
\]
exists and 
\[
K=(j_{1},\dots,j_{l-1},1,j_{l}-1,j_{l+1},\dots,j_{m}).
\]

\begin{thm} \label{t-Pkbasis} 
The set $\{\,F_{J}-F_{K}:J\rightarrowtriangle_{\Pk}K\,\}$ is a linear basis of the ideal $\mathcal{K}^{\Pk}$.
\end{thm}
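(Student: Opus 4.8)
The plan is to apply Theorem \ref{t-Fsst} with $\st = \Pk$ and $S = \{\,(J,K) : J \rightarrowtriangle_{\Pk} K\,\}$. By Theorem \ref{t-Fsst}, it suffices to verify two things: (a) the connected components of $G_S$ are exactly the $\Pk$-equivalence classes of $\mathcal{C}$, and (b) $G_S$ is a forest. Since $J \rightarrowtriangle_{\Pk} K$ implies $J \rightarrow_{\Pk} K$, Lemma \ref{l-Pkequiv} gives that endpoints of edges of $G_S$ are $\Pk$-equivalent, so the components of $G_S$ refine the $\Pk$-classes. The substantive content is the reverse containment for (a) and all of (b).

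For part (a), I would first observe that each relation $J \rightarrowtriangle_{\Pk} K$ is \emph{deterministic}: from a composition $J$ there is at most one $K$ with $J \rightarrowtriangle_{\Pk} K$, namely the one obtained by splitting the unique part at position $l = \min\{\,i : j_i > 2,\ \text{or } i = m \text{ and } j_m = 2\,\}$. Following the out-edges from any composition $J$ thus gives a unique maximal directed path $J \rightarrowtriangle_{\Pk} J_1 \rightarrowtriangle_{\Pk} J_2 \rightarrowtriangle_{\Pk} \cdots$, which must terminate (each step strictly increases the number of parts, bounded by $|J|$) at a composition $\widehat{J}$ that has no out-edge — i.e., $\widehat{J}$ has all parts equal to $1$ or $2$ and does not end in $2$. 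I would then argue that $\widehat{J} = J_\beta$, the ``canonical form'' constructed in the proof of Theorem \ref{t-F} (a): splitting the first available part repeatedly has the same net effect as splitting all parts $>2$ and then fixing the final $2$, since splitting is confluent here (the order in which we break up parts $>2$ does not change the final result, and breaking a terminal $2$ commutes with the rest). Now given $J \sim_{\Pk} K$, the proof of Theorem \ref{t-F} (a) already shows $J_\beta = K_\beta$; hence $\widehat{J} = \widehat{K}$, so $J$ and $K$ are joined in $G_S$ via the paths $J \to \widehat{J} = \widehat{K} \leftarrow K$. This establishes that the components of $G_S$ are precisely the $\Pk$-classes.

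For part (b), I would use that $G_S$ has the structure of a functional graph on its out-edges: every vertex has out-degree at most $1$, and there are no directed cycles (out-degree steps strictly increase part-count). A directed graph in which every vertex has out-degree $\le 1$ and which has no directed cycle has no cycle in its underlying undirected graph either — indeed, an undirected cycle $s_1 - s_2 - \cdots - s_k - s_1$ would force at least two of its vertices to have out-degree $\ge 2$ within the cycle unless the edges are all oriented consistently around it, which would be a directed cycle; a short case analysis (or the standard fact that a functional graph's underlying graph is a disjoint union of ``rho-shaped'' components, and acyclicity of the function forces these to be trees) closes this. Therefore $G_S$ is a forest, and Theorem \ref{t-Fsst} (b) gives that $\{\,F_J - F_K : J \rightarrowtriangle_{\Pk} K\,\}$ is linearly independent; combined with (a), it is a basis of $\mathcal{K}^{\Pk}$.

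The main obstacle I anticipate is the confluence argument in part (a): making rigorous that the greedy ``split the first eligible part'' procedure lands on the same composition $J_\beta$ as the non-deterministic procedure used in the proof of Theorem \ref{t-F} (a). This requires care because splitting a part $>2$ via $\rightarrow_1$ can be applied in many orders, and one must check that the leftmost-first discipline does not get ``stuck'' — e.g., that after producing a leading block of $1$s and $2$s the minimum index $l$ correctly advances past them to the next part $>2$, and that the terminal-$2$ step is only ever taken once, at the very end. A clean way to handle this is to describe $\widehat{J}$ explicitly part-by-part (replace each $j_i > 2$ by $1^{j_i - 2}2$, leave parts $\le 2$ alone except replace a final $2$ by $1,1$) and verify directly that this is both the fixed point of $\rightarrowtriangle_{\Pk}$ reachable from $J$ and equals $J_\beta$; the rest is bookkeeping.
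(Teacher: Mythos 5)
Your proposal is correct and follows essentially the same route as the paper's proof. You apply Theorem \ref{t-Fsst} with $S=\{(J,K): J\rightarrowtriangle_{\Pk}K\}$, establish linear independence by observing that every vertex of $G_S$ has out-degree at most one and there are no directed cycles (so no undirected cycles), and establish spanning by adapting the $J_\beta$ construction from the proof of Theorem \ref{t-F}(a), arguing that the greedy leftmost-split procedure reaches $J_\beta$. The paper compresses this last step into the remark that the splittings can be performed left to right; you make the implicit confluence argument explicit and identify the resulting canonical form directly, which is a reasonable way to fill in the same gap. (One small slip: an undirected cycle that is not a directed cycle forces at least one vertex of out-degree $\geq 2$ on the cycle, not two; but one suffices, so the argument stands.)
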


\begin{proof}
Let $S=\{\,(J,K):J\rightarrowtriangle_{\Pk}K\,\}$, so that we wish to show that $\mathcal{F}_{S}^{\Pk}$ is a basis of $\mathcal{K}^{\Pk}$. We shall first argue that $G_{S}$ is a forest. First, observe that $G_{S}$ has no directed cycles; after all, $J\rightarrowtriangle_{\Pk}K$ implies that $K$ has more parts than $J$. Therefore, $G_{S}$ can only fail to be a forest if it has an undirected cycle which is not a directed cycle, and it is not hard to see that this is only possible if two edges of $G_{S}$ have the same tail. (Given an undirected cycle, any attempt to orient the edges of this cycle will either result in a directed cycle or a vertex with outdegree 2.) By definition of $\rightarrowtriangle_{\Pk}$, for every composition $J$ there is at most one $K$ for which $J\rightarrowtriangle_{\Pk}K$, which means that no vertex of $G_{S}$ is the tail of more than one edge. Therefore, $G_{S}$ is a forest which implies that $\mathcal{F}_{S}^{\Pk}$ is linearly independent by Theorem \ref{t-Fsst} (b).

The proof of Theorem \ref{t-F} (a) can be easily adapted to show that $\mathcal{F}_{S}^{\Pk}$ is a spanning set for $\mathcal{K}^{\Pk}$ as well. Indeed, we already have $J_{\alpha}\rightarrowtriangle_{\Pk}J_{\beta}$ and $K_{\alpha}\rightarrowtriangle_{\Pk}K_{\beta}$, and when forming the sequences $J\rightarrow_{1}J_{1}\rightarrow_{1}J_{2}\rightarrow_{1}\cdots\rightarrow_{1}J_{\alpha}$ and $K\rightarrow_{1}K_{1}\rightarrow_{1}K_{2}\rightarrow_{1}\cdots\rightarrow_{1}K_{\alpha}$, we can require that the parts be split from left to right so that
\[
J\rightarrowtriangle_{\Pk}J_{1}\rightarrowtriangle_{\Pk}J_{2}\rightarrowtriangle_{\Pk}\cdots\rightarrowtriangle_{\Pk}J_{\alpha}\quad\text{and}\quad K\rightarrowtriangle_{\Pk}K_{1}\rightarrowtriangle_{\Pk}K_{2}\rightarrowtriangle_{\Pk}\cdots\rightarrowtriangle_{\Pk}K_{\alpha}.
\]
The rest of the proof proceeds in the same way.
\end{proof}

\subsection{\label{ss-Fpk}Application to the peak number\textemdash proof of Theorem \ref{t-F} (b)}

Similar to the notation $\rightarrow_{\Pk}$ defined in Section \ref{ss-FPk}, let us write $J\rightarrow_{\pk}K$ if $J\rightarrow_{1}K$, $J\rightarrow_{2}K$, or $J\rightarrow_{3}K$.

\begin{lem} \label{l-pkequiv}
If $J\rightarrow_{\pk}K$, then $J$ and $K$ are $\pk$-equivalent.
\end{lem}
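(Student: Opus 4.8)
The plan is to verify the two required properties of $\st$-equivalence---equal length and equal $\pk$-value---separately for each of the three relations $\rightarrow_1$, $\rightarrow_2$, $\rightarrow_3$, using the explicit formula for $\pk$ on compositions from Lemma \ref{l-PkpkL}~(b). The length condition is immediate in every case: $\rightarrow_1$ replaces a part $j_l$ by the two parts $1$ and $j_l-1$, $\rightarrow_2$ replaces the final part $2$ by $1,1$, and $\rightarrow_3$ merely transposes two adjacent parts; none of these changes $|J|$. So the substance of the proof is showing $\pk J=\pk K$ in each case.

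For $\rightarrow_1$, suppose $K=(j_1,\dots,j_{l-1},1,j_l-1,j_{l+1},\dots,j_m)$ with $j_l>2$. By Lemma \ref{l-PkpkL}~(b), $\pk$ counts the non-final parts that are $\geq 2$. The part $j_l$ is split into a part equal to $1$ (which never contributes) and a part equal to $j_l-1\geq 2$; since $j_l>2$ forces $j_l-1\geq 2$, the new part $j_l-1$ is a non-final part that is $\geq 2$, exactly matching the contribution of $j_l$ in $J$. All other parts, and their positions relative to the end, are unchanged, so $\pk K=\pk J$. (This is essentially the argument already used for $\rightarrow_\Pk$ in Lemma \ref{l-Pkequiv}, now phrased for $\pk$ instead of $\Pk$.)

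For $\rightarrow_2$, suppose $j_m=2$ and $K=(j_1,\dots,j_{m-1},1,1)$. The final part $j_m=2$ of $J$ is a \emph{final} part, so it contributes nothing to $\pk J$ by Lemma \ref{l-PkpkL}~(b); in $K$ the two new parts are both $1$ and also contribute nothing. The remaining parts $j_1,\dots,j_{m-1}$ are non-final in both $J$ and $K$, so their contributions agree, giving $\pk K=\pk J$. For $\rightarrow_3$, suppose all parts of $J$ are $\leq 2$, $j_m=1$, and for some $l\in[m-2]$ we have $j_l=1$, $j_{l+1}=2$, and $K=(j_1,\dots,j_{l-1},j_{l+1},j_l,j_{l+2},\dots,j_m)=(j_1,\dots,j_{l-1},2,1,j_{l+2},\dots,j_m)$. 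This swaps the values at positions $l$ and $l+1$, and since $l+1\leq m-1$ both positions remain non-final in $K$ (as $j_m=1$ is untouched and occupies position $m$). Among positions $l$ and $l+1$, exactly one part equals $2$ both before and after the swap, so the count of non-final parts $\geq 2$ is unchanged; all other parts are fixed. Hence $\pk K=\pk J$.

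The only mild subtlety---and the step most worth stating carefully---is keeping track of whether a modified part is \emph{final}, since Lemma \ref{l-PkpkL}~(b) only counts non-final parts; this is what makes the $\rightarrow_2$ case work (the part being altered is final, so it was never counted) and what forces the restriction $l\in[m-2]$ in the definition of $\rightarrow_3$ (so that the swapped positions stay away from the last part). Once this bookkeeping is handled, each case reduces to a direct comparison of the two counts and there is no real obstacle.
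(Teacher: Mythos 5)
Your proposal is correct and follows essentially the same route as the paper: case analysis on the three relations using Lemma \ref{l-PkpkL}~(b). The only stylistic difference is that the paper dispatches the $\rightarrow_1$ and $\rightarrow_2$ cases in one line by citing Lemma \ref{l-Pkequiv} (since $\Pk$-equivalence trivially implies $\pk$-equivalence, as $\pk=|\Pk|$), whereas you re-derive those two cases directly for $\pk$; for $\rightarrow_3$ the paper observes that the parts are merely permuted while the final part (equal to $1$) is fixed, which you spell out more carefully by tracking which positions are non-final.
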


\begin{proof}
If $J\rightarrow_{1}K$ or $J\rightarrow_{2}K$, then it follows from Lemma \ref{l-Pkequiv} that $J$ and $K$ are $\Pk$-equivalent and thus $\pk$-equivalent. If $J\rightarrow_{3}K$, then $K$ has the same parts as $J$ but listed in a different order, which implies $\pk J=\pk K$ by Lemma \ref{l-PkpkL} (b) as well as $\left|J\right|=\left|K\right|$.
\end{proof}

We now proceed to the proof of Theorem \ref{t-F} (b).

\begin{proof}[Proof of Theorem \ref{t-F} \textup{(}b\textup{)}]
We follow the same approach taken in the proof of part (a). Let $S=\{\,(J,K):J\rightarrow_{\pk}K\,\}$, which is a subset of $\{\,(J,K):J\sim_{\pk}K\,\}$ by Lemma \ref{l-pkequiv}. From Lemma \ref{l-ccequiv}, we know that if $J$ and $K$ are in the same connected component of $G_{S}$ then $J\sim_{\pk}K$, so it remains to prove the converse.

Suppose that $J\sim_{\pk}K$. Define $J_{\beta}$ and $K_{\beta}$ in the same way as in the proof of part (a); as before, $J$ and $J_{\beta}$ are in the same connected component and the same is true for $K$ and $K_{\beta}$. Recall that $J_{\beta}$ and $K_{\beta}$ have all parts 1 and 2, and end with a 1. Let $J_{\gamma}$ be the composition $(2^{b},1^{a})$ where $a>0$ is the number of 1s in $J_{\beta}$ and $b$ is the number of 2s in $J_{\beta}$. Then there is a sequence of compositions satisfying
\[
J_{\beta}\rightarrow_{3}J_{1}\rightarrow_{3}J_{2}\rightarrow_{3}\cdots\rightarrow_{3}J_{\gamma}.
\]
For example, if $J_{\beta}=(1,2,2,1,1)$, then 
\[
J_{\beta}=(1,2,2,1,1)\rightarrow_{3}(2,1,2,1,1)\rightarrow_{3}(2,2,1,1,1)=J_{\gamma}.
\]
Note that $J_{\gamma}$ is in the same connected component of $G_{S}$ as $J_{\beta}$ and thus $J$, and similarly with $K_{\gamma}$ and $K$. Therefore, $J\sim_{\pk}J_{\gamma}$ and $K\sim_{\pk}K_{\gamma}$. Along with $J\sim_{\pk}K$, these $\pk$-equivalences imply that $J_{\gamma}\sim_{\pk}K_{\gamma}$, so $\pk J_{\gamma}=\pk K_{\gamma}$ and $\left|J_{\gamma}\right|=\left|K_{\gamma}\right|$. From Lemma \ref{l-PkpkL} (b), we know that the number of peaks is equal to the number of non-final parts of size at least 2, and since both $J_{\gamma}$ and $K_{\gamma}$ consist of a sequence of 2s followed by a sequence of 1s, we conclude that $J_{\gamma}=K_{\gamma}$. Hence, $J$ and $K$ are in the same connected component of $G_{S}$.

Since the connected components of $G_{S}$ are precisely the $\pk$-equivalence classes of compositions, applying Theorem \ref{t-Fsst} (a) yields the desired result.
\end{proof}

Like with $\mathcal{K}^{\Pk}$, we shall trim the spanning set for $\mathcal{K}^{\pk}$ provided by Theorem \ref{t-F} (b) to yield a basis for this ideal. Given compositions $J=(j_{1},j_{2},\dots,j_{m})$ and $K$, we write $J\rightarrowtriangle_{\pk}K$ if either $J\rightarrowtriangle_{\Pk}K$\textemdash or, in the case that $l$ (as in the definition of $\rightarrowtriangle_{\Pk}$) does not exist, if
\[
k=\min\{\,i:j_{i}=1\text{ and }j_{i+1}=2\,\}
\]
exists and 
\[
K=(j_{1},\dots,j_{k-1},j_{k+1},j_{k},j_{k+2},\dots,j_{m}).
\]
Informally speaking, here we are also requiring the ``swapping'' ($\rightarrow_{3}$) to occur from left to right and only after all of the ``splitting'' ($\rightarrow_{1}$ and $\rightarrow_{2}$) has taken place. 

Given a composition $J=(j_1, j_2, \dots, j_m)$, let us call the pair $(j_k,j_l)$ an \textit{inversion} of the composition $J$ if $1 \leq k<l \leq m$ and $j_k > j_l$.

\begin{thm} \label{t-pkbasis}
The set $\{\,F_{J}-F_{K}:J\rightarrowtriangle_{\pk}K\,\}$ is a linear basis of the ideal $\mathcal{K}^{\pk}$.
\end{thm}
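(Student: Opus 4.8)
The strategy mirrors the proof of Theorem~\ref{t-Pkbasis}: I would set $S = \{\,(J,K) : J\rightarrowtriangle_{\pk}K\,\}$ and show that $\mathcal{F}_S^{\pk}$ is both linearly independent and a spanning set for $\mathcal{K}^{\pk}$, so that Theorem~\ref{t-Fsst} delivers the result. By Theorem~\ref{t-F}(b) we already know $\mathcal{K}^{\pk}$ is spanned by the edges coming from $\rightarrow_1,\rightarrow_2,\rightarrow_3$, so the point is to verify that the restricted relation $\rightarrowtriangle_{\pk}$ still connects every $\pk$-equivalence class while keeping the associated graph acyclic.

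\textbf{Linear independence.} First I would argue that $G_S$ is a forest, via Theorem~\ref{t-Fsst}(b). The key observation is that $\rightarrowtriangle_{\pk}$ is \emph{deterministic}: for each composition $J$ there is at most one $K$ with $J\rightarrowtriangle_{\pk}K$, because the definition picks out a unique position (the least $i$ with $j_i>2$, or the trailing $2$, or failing those, the least $i$ with $j_i=1,j_{i+1}=2$). Hence no vertex of $G_S$ is the tail of more than one edge. Next I need that $G_S$ has no directed cycle. This requires a monovariant along $\rightarrowtriangle_{\pk}$-edges. A $\rightarrow_1$ or $\rightarrow_2$ step strictly increases the number of parts, and within those the parts stay $\{1,2\}$-valued once we have reached the ``all parts $\le 2$'' regime; a $\rightarrow_3$ step preserves the number of parts but strictly decreases the number of inversions of the composition (it swaps an adjacent $(1,2)$ into $(2,1)$, removing exactly one inversion and creating none, since all other parts are $1$ or $2$). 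So the pair $\bigl(\,-(\text{number of parts}),\ \text{number of inversions}\,\bigr)$ strictly decreases lexicographically along every edge, ruling out directed cycles. As in Theorem~\ref{t-Pkbasis}, an undirected cycle that is not a directed cycle would force a vertex of outdegree $2$, which is impossible by determinism; hence $G_S$ is a forest and $\mathcal{F}_S^{\pk}$ is linearly independent.

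\textbf{Spanning.} For the spanning claim I would adapt the proof of Theorem~\ref{t-F}(b). Starting from $J\sim_{\pk}K$, first run the splitting moves from left to right so that $J\rightarrowtriangle_{\pk}J_1\rightarrowtriangle_{\pk}\cdots\rightarrowtriangle_{\pk}J_\beta$ and likewise for $K$ (this is exactly the trimming already used for $\mathcal{K}^{\Pk}$, so each such step is a $\rightarrowtriangle_{\Pk}$-step and hence a $\rightarrowtriangle_{\pk}$-step). Once $J_\beta$ has all parts in $\{1,2\}$ and ends in a $1$, the condition defining $\rightarrowtriangle_{\Pk}$ fails (there is no part $>2$ and no trailing $2$), so subsequent $\rightarrow_3$ moves are legitimate $\rightarrowtriangle_{\pk}$-steps provided we always swap the leftmost adjacent $(1,2)$; repeatedly doing so bubbles every $2$ leftward past the $1$s, terminating at $J_\gamma = (2^b,1^a)$. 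Thus $J\rightarrowtriangle_{\pk}\cdots\rightarrowtriangle_{\pk}J_\gamma$ and similarly $K\rightarrowtriangle_{\pk}\cdots\rightarrowtriangle_{\pk}K_\gamma$, so $J$ and $K$ lie in the same connected component of $G_S$ as $J_\gamma$ and $K_\gamma$ respectively. By Lemma~\ref{l-ccequiv}, $J_\gamma\sim_{\pk}K_\gamma$, and then Lemma~\ref{l-PkpkL}(b) forces $J_\gamma=K_\gamma$ since both are decreasing $\{1,2\}$-compositions determined by their peak number and length. Hence the connected components of $G_S$ are exactly the $\pk$-equivalence classes, and Theorem~\ref{t-Fsst}(a) gives that $\mathcal{F}_S^{\pk}$ spans $\mathcal{K}^{\pk}$.

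\textbf{Main obstacle.} The delicate point is confirming that the $\rightarrow_3$ moves used in the spanning argument genuinely qualify as $\rightarrowtriangle_{\pk}$-moves — i.e., that once we are in the ``all parts $\le 2$, trailing $1$'' situation the minimality condition for $\rightarrowtriangle_{\Pk}$ really does fail, so that we are allowed to invoke the swap clause, and that always choosing the leftmost swappable $(1,2)$ both keeps us within this regime and is guaranteed to terminate at $(2^b,1^a)$. Verifying this, together with pinning down the correct monovariant (number of parts, then inversions) so that no directed cycle sneaks in, is where the care is needed; everything else is a routine transcription of the $\Pk$-case.
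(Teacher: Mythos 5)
Your overall strategy mirrors the paper's exactly: set $S=\{\,(J,K):J\rightarrowtriangle_{\pk}K\,\}$, prove linear independence by showing $G_S$ is a forest (determinism rules out undirected-but-not-directed cycles, a monovariant rules out directed cycles), and adapt the $\Pk$-case spanning argument, invoking Theorem~\ref{t-Fsst}. Your spanning argument and your resolution of the ``main obstacle'' are both sound: when all parts are $\le 2$ and the last part is $1$, the $l$ from the definition of $\rightarrowtriangle_{\Pk}$ indeed fails to exist, the swap clause applies, the condition $j_{k+1}=2$ automatically forces $k\le m-2$ because $j_m=1$, and repeated leftmost swaps terminate at $(2^b,1^a)$.

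However, the direction of your monovariant is wrong. Swapping an adjacent $(1,2)$ into $(2,1)$ \emph{adds} one inversion, not removes one: the pair at positions $(l,l+1)$ was $(1,2)$ (no inversion) and becomes $(2,1)$ (an inversion), and a quick count shows all other inversion changes cancel, giving a net change of $+1$. So the number of inversions strictly \emph{increases} along a $\rightarrow_3$ edge, which is what the paper states. Consequently, your claimed monovariant $\bigl(-(\text{number of parts}),\,\text{number of inversions}\bigr)$ does not decrease lexicographically on a $\rightarrow_3$ edge (first coordinate unchanged, second coordinate goes up). The fix is immediate: use the pair $\bigl(\text{number of parts},\,\text{number of inversions}\bigr)$, which strictly increases lexicographically along every $\rightarrowtriangle_{\pk}$-edge, and this rules out directed cycles. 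With that correction your argument is complete and agrees with the paper's.
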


\begin{proof}

Let $S=\{\,(J,K):J\rightarrowtriangle_{\pk}K\,\}$; we wish to show that $\mathcal{F}_{S}^{\pk}$ is a basis of $\mathcal{K}^{\pk}$. By the same reasoning as in the proof of Theorem \ref{t-Pkbasis}, $G_{S}$ cannot have a directed cycle containing any edges of the form $J \rightarrowtriangle_{\Pk} K$. On the other hand, we also cannot have a directed cycle only containing edges of the form $J\rightarrow_{3}K$, as when $J\rightarrow_{3}K$, the number of inversions of $K$ is greater than that of $J$. Therefore, $G_{S}$ does not have any directed cycles. The same reasoning from the proof of Theorem \ref{t-Pkbasis} shows that $G_{S}$ has no undirected cycles either. Thus, $G_{S}$ is a forest, and it follows from Theorem \ref{t-Fsst} (b) that $\mathcal{F}_{S}^{\pk}$ is linearly independent.

The proof that $\mathcal{F}_{S}^{\pk}$ spans $\mathcal{K}^{\pk}$ is similar to that of the analogous result for $\Pk$ (again, see the proof of Theorem \ref{t-Pkbasis}); we omit the details.
\end{proof}

\section{\label{s-mon}Characterizations in terms of the monomial basis}

\subsection{\label{ss-MPk}The peak set\textemdash proof of Theorem \ref{t-M}
(a)}

We now turn our attention to characterizing the ideals $\mathcal{K}^{\Pk}$ and $\mathcal{K}^{\pk}$ in terms of the monomial basis, beginning with $\mathcal{K}^{\Pk}$. Our proofs here are based on Grinberg's proof for the analogous result on $\mathcal{K}^{\Epk}$. Let us first introduce some notation which will simplify the presentation of our proofs.

Recall that $\mathcal{K}_{n}^{\st}$ denotes the $n$th homogeneous component of $\mathcal{K}^{\st}$. Define 
\begin{align*}
\mathcal{F}_{n}^{\Pk} & \coloneqq\{\,F_{J}-F_{K}:J\rightarrow_{\Pk}K\text{ and }J,K\vDash n\,\}\quad\text{and}\\
\mathcal{M}_{n}^{\Pk} & \coloneqq\{M_{J}+M_{K}:J\triangleright_{1}K\text{ or }J\triangleright_{2}K,\text{ and }J,K\vDash n\}\cup\{M_{(1^{n-2},2)}\}.
\end{align*}
We know from Theorem \ref{t-F} (a) that $\mathcal{K}_{n}^{\Pk}=\Span\mathcal{F}_{n}^{\Pk}$ for all $n$, and our present goal is to show that $\mathcal{K}_{n}^{\Pk}=\Span\mathcal{M}_{n}^{\Pk}$ for all $n$.

Let
\begin{align*}
\Omega_{n,1} & \coloneqq\{\,(C,k):C\subsetneq[n-1],\,k\notin C,\,k-1\notin C,\,k-2\in C\cup\{0\}\,\},\\
\Omega_{n,2} & \coloneqq\{\,(C,k):C\subsetneq[n-2],\,n-2\in C,\,k\notin C,\,k+1\in C,\,k-1\in C\cup\{0\}\,\},\text{ and}\\
\Omega_{n,3} & \coloneqq\{\,([n-2],n-1)\,\}
\end{align*}
be subsets of $2^{[n-1]}\times[n-1]$, where $2^{[n-1]}$ is the power set of $[n-1]$. Here we let $[0]$ be the empty set, so that $\Omega_{2,3}=\{(\emptyset,1)\}$. Note that $\Omega_{n,1}$ is empty for $n\leq2$, $\Omega_{n,2}$ is empty for $n\leq3$, and $\Omega_{n,3}$ is empty for $n\leq1$. It is easy to check that the sets $\Omega_{n,1}$, $\Omega_{n,2}$, and $\Omega_{n,3}$ are disjoint for any fixed $n$; let
\[
\Omega_{n}\coloneqq\Omega_{n,1}\sqcup\Omega_{n,2}\sqcup\Omega_{n,3}.
\]

Given $(C,k)\in\Omega_{n}$, define the quasisymmetric functions $\mathbf{f}_{C,k}$ and $\mathbf{m}_{C,k}$\footnote{The $\mathbf{f}_{C,k}$ and $\mathbf{m}_{C,k}$ depend on $n$, but we suppress $n$ from their notation for the sake of simplicity.} by
\[
\mathbf{f}_{C,k}=\begin{cases}
F_{n,C}-F_{n,C\cup\{k-1\}}, & \text{if }(C,k)\in\Omega_{n,1},\\
F_{n,C}-F_{n,C\cup\{n-1\}}, & \text{if }(C,k)\in\Omega_{n,2}\text{ or }(C,k)\in\Omega_{n,3},
\end{cases}
\]
and 
\[
\mathbf{m}_{C,k}=\begin{cases}
M_{n,C}+M_{n,C\cup\{k\}}, & \text{if }(C,k)\in\Omega_{n,1}\text{ or }(C,k)\in\Omega_{n,2},\\
M_{n,C}, & \text{if }(C,k)\in\Omega_{n,3}.
\end{cases}
\]
In what follows, we will consider the families $(\mathbf{f}_{C,k})_{(C,k)\in\Omega_{n}}$ and $(\mathbf{m}_{C,k})_{(C,k)\in\Omega_{n}}$. As noted in Section \ref{ss-tri}, a family can admit repeated members. For instance, given $(C,k_{1}),(C,k_{2})\in\Omega_{n,2}$ where $k_{1}\neq k_{2}$, we have $\mathbf{f}_{C,k_{1}}=\mathbf{f}_{C,k_{2}}$. This is in contrast to $\mathcal{F}_{n}^{\Pk}$ and $\mathcal{M}_{n}^{\Pk}$, which have no repeated elements by virtue of being sets.

In order to prove Theorem \ref{t-M} (a), we first prove that $\Span(\mathbf{f}_{C,k})_{(C,k)\in\Omega_{n}}=\Span\mathcal{F}_{n}^{\Pk}$ and $\Span(\mathbf{m}_{C,k})_{(C,k)\in\Omega_{n}}=\Span\mathcal{M}_{n}^{\Pk}$, and then show that $\Span(\mathbf{f}_{C,k})_{(C,k)\in\Omega_{n}}=\Span(\mathbf{m}_{C,k})_{(C,k)\in\Omega_{n}}$. In doing so, we will make repeated use of the correspondence between compositions of $n$ and subsets of $[n-1]$, but many of the details will be omitted as they are straightforward to verify from the relevant definitions yet distract from the main essence of the proof.

\begin{prop} \label{p-PkftoF}
For all $n\geq0$, we have $\Span(\mathbf{f}_{C,k})_{(C,k)\in\Omega_{n}}=\Span\mathcal{F}_{n}^{\Pk}$.
\end{prop}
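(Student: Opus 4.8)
The plan is to prove the equality of spans by exhibiting, in each direction, that the spanning vectors of one side lie in the span of the other --- in fact I would aim for something a bit stronger, namely a bijective-style correspondence between the family $(\mathbf{f}_{C,k})_{(C,k)\in\Omega_n}$ and the set $\mathcal{F}_n^{\Pk}$, so that every element of $\mathcal{F}_n^{\Pk}$ occurs among the $\mathbf{f}_{C,k}$ and conversely every $\mathbf{f}_{C,k}$ is (possibly a repetition of) an element of $\mathcal{F}_n^{\Pk}$. First I would unwind the definition: a vector $F_J - F_K \in \mathcal{F}_n^{\Pk}$ comes from a relation $J \rightarrow_1 K$ or $J \rightarrow_2 K$. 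Translating each of these two relations through the bijection $\Comp \colon 2^{[n-1]} \to \mathcal{C}_n$ (equivalently $\Des$), I would show that $J \rightarrow_1 K$ corresponds exactly to passing from a subset $C = \Des J$ to $C \cup \{k-1\} = \Des K$ for a suitable $k$, where the combinatorial conditions ``$j_l > 2$'' on the composition side become precisely the membership conditions ``$k \notin C$, $k-1 \notin C$, $k - 2 \in C \cup \{0\}$'' defining $\Omega_{n,1}$; and that $J \rightarrow_2 K$ (the special move at the final part $j_m = 2$) corresponds to passing from $C$ to $C \cup \{n-1\}$ with $C$ satisfying exactly the conditions defining $\Omega_{n,2}$ together with $\Omega_{n,3}$. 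Here the $\Omega_{n,3}$ case $([n-2], n-1)$ is the degenerate instance where $J = (1^{n-2}, 2)$ and $K = (1^n)$, i.e.\ $C = [n-2]$; this has to be separated out because in that case there is no interior ``$k$'' with $k+1 \in C$, which is why its definition of $\mathbf{f}_{C,k}$ is grouped with $\Omega_{n,2}$.

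Having set up this dictionary, the argument splits cleanly. For the inclusion $\Span(\mathbf{f}_{C,k})_{(C,k)\in\Omega_n} \subseteq \Span \mathcal{F}_n^{\Pk}$: each $\mathbf{f}_{C,k}$ is, by the dictionary, literally of the form $F_J - F_K$ for some pair with $J \rightarrow_{\Pk} K$, hence lies in $\mathcal{F}_n^{\Pk}$ (the only subtlety being that distinct $(C,k) \in \Omega_{n,2}$ with the same $C$ give the same vector $F_{n,C} - F_{n,C \cup \{n-1\}}$, which is fine since we are passing to spans anyway). For the reverse inclusion $\Span \mathcal{F}_n^{\Pk} \subseteq \Span(\mathbf{f}_{C,k})_{(C,k)\in\Omega_n}$: conversely, every $F_J - F_K$ with $J \rightarrow_1 K$ or $J \rightarrow_2 K$ is, via the dictionary, equal to some $\mathbf{f}_{C,k}$ with $(C,k) \in \Omega_n$ --- for a $\rightarrow_1$ move one reads off $C = \Des J$ and the position $k$; for a $\rightarrow_2$ move one again reads off $C$ and checks the resulting pair lands in $\Omega_{n,2}$ or, in the edge case $J = (1^{n-2},2)$, in $\Omega_{n,3}$. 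Both inclusions together give the claimed equality for each $n \geq 0$, with the small cases ($n \leq 2$) handled by the observation, already noted in the text, that the relevant $\Omega_{n,i}$ are empty and both sides reduce to $\{0\}$ or the empty span.

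The genuinely finicky part --- and the step I expect to absorb most of the work --- is verifying that the three lists of set-theoretic conditions defining $\Omega_{n,1}$, $\Omega_{n,2}$, $\Omega_{n,3}$ match, on the nose and without overlap or omission, the composition-level descriptions of $\rightarrow_1$ and $\rightarrow_2$. Concretely: one must check that ``$j_l > 2$ and $K$ obtained by splitting off a leading $1$ from $j_l$'' translates to ``$k = (j_1 + \cdots + j_{l-1}) + 1 \notin C$, its predecessor $k - 1 \notin C$, and $k - 2 \in C \cup \{0\}$'' (the last condition encoding that $j_l \geq 3$ rather than $j_l = 2$, using the convention $[0] = \emptyset$); that the $\rightarrow_2$ move at a terminal $2$ forces $n - 2 \in C$ and introduces the new descent $n-1$; and that the boundary interactions between ``the $l$-th part is non-final'' in the composition statements and the strict inclusions $C \subsetneq [n-1]$, $C \subsetneq [n-2]$ in the $\Omega$ definitions are consistent. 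None of this is deep, but it is the kind of index-chasing that is easy to get slightly wrong, so I would state the dictionary as an explicit correspondence and then check each of the (few) cases, while --- as the surrounding text already signals --- suppressing the most routine of these verifications.
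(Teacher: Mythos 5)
Your proposal hinges on the claim that every $F_J - F_K \in \mathcal{F}_n^{\Pk}$ is literally equal to some $\mathbf{f}_{C,k}$ with $(C,k)\in\Omega_n$, and this is false. The problem is in the $\rightarrow_2$ case. Unwinding $\Omega_{n,2}$: the conditions $k\notin C$, $k+1\in C$, $k-1\in C\cup\{0\}$ assert that $C=\Des J$ has two consecutive descents (or a descent at position 2 with $k=1$) bracketing the missing position $k$, which means $J$ has some \emph{non-final part equal to $2$}. And $\Omega_{n,3}$ handles only $J=(1^{n-2},2)$, where all non-final parts equal $1$. If $J$ ends in $2$ but its non-final parts include something $>2$ and nothing $=2$, there is no valid $k$ at all. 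Concretely, take $n=5$ and $J=(3,2)\rightarrow_2 K=(3,1,1)$. Then $\Des J = \{3\}$, and one checks directly that $(\{3\},k)\notin\Omega_{5,2}$ for any $k$ (the only candidate $k=2$ fails $k-1\in C\cup\{0\}$), while $\{3\}\neq[3]$ rules out $\Omega_{5,3}$. Indeed the only $\mathbf{f}$ with this $C$ is $\mathbf{f}_{\{3\},2} = F_{(3,2)} - F_{(1,2,2)}$, coming from $\Omega_{5,1}$, which is a different vector. So $F_{(3,2)}-F_{(3,1,1)}$ is \emph{not} in the family $(\mathbf{f}_{C,k})_{(C,k)\in\Omega_5}$, and the bijective-style dictionary you propose does not exist.

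The paper's proof acknowledges exactly this: the reverse inclusion requires a telescoping argument for this subcase, not an identification. When $J\rightarrow_2 K$ with no non-final part of $J$ equal to $2$ but some part $j_a>2$, one interpolates by repeatedly applying $\rightarrow_1$ to split $j_a$ down to parts of size $\leq 2$ in both $J$ and $K$ in parallel, reducing to compositions $J_{j_a-2}\rightarrow_2 K_{j_a-2}$ where the relevant part has been shrunk to $2$, and then writes $F_J - F_K$ as a signed telescoping sum of the intermediate differences, each of which \emph{is} one of the $\mathbf{f}_{C,k}$ (from $\Omega_{n,1}$ for the splitting steps and $\Omega_{n,2}$ for the final step). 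Thus $F_J-F_K$ lies in the span but not in the family. Your argument for the forward inclusion and for the $\rightarrow_1$ direction of the dictionary is fine, but to complete the reverse inclusion you would need to add this telescoping step; as written, the proof does not go through.
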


\begin{proof}
Fix $(C,k)\in\Omega_{n}$. The following are readily checked:
\begin{enumerate}
\item[(1)] If $(C,k)\in\Omega_{n,1}$, then $\Comp C\rightarrow_{1}\Comp(C\cup\{k-1\})$
and so taking $J=\Comp C$ and $K=\Comp(C\cup\{k-1\})$ yields
\[
\mathbf{f}_{C,k}=F_{n,C}-F_{n,C\cup\{k-1\}}=F_{J}-F_{K}\in\mathcal{F}_{n}^{\Pk}.
\]
\item[(2)] If $(C,k)\in\Omega_{n,2}$ or $(C,k)\in\Omega_{n,3}$, then $\Comp C\rightarrow_{2}\Comp(C\cup\{n-1\})$
and so taking $J=\Comp C$ and $K=\Comp(C\cup\{n-1\})$ implies
\[
\mathbf{f}_{C,k}=F_{n,C}-F_{n,C\cup\{n-1\}}=F_{J}-F_{K}\in\mathcal{F}_{n}^{\Pk}.
\]
\end{enumerate}
Thus $\mathbf{f}_{C,k}\in\mathcal{F}_{n}^{\Pk}$ for all $(C,k)\in\Omega_{n}$, which implies $\Span(\mathbf{f}_{C,k})_{(C,k)\in\Omega_{n}}\subseteq\Span\mathcal{F}_{n}^{\Pk}$ by linearity.

For the reverse inclusion, fix $J,K\vDash n$ satisfying $J\rightarrow_{\Pk}K$. As above, we consider separate cases:
\begin{enumerate}
\item[(1)] Suppose $J\rightarrow_{1}K$, so that we may write $J=(j_{1},j_{2},\dots,j_{m})$ and $K=(j_{1},\dots,j_{l-1},1,j_{l}-1,j_{l+1},\dots,j_{m})$ where $j_{l}>2$. Then $\Des K=(\Des J)\cup\{k-1\}$ where $k-1=j_{1}+\cdots+j_{l-1}+1$, and $(\Des J,k)\in\Omega_{n,1}$. Hence 
\[
F_{J}-F_{K}=F_{n,\Des J}-F_{n,(\Des J)\cup\{k-1\}}=\mathbf{f}_{\Des J,k}.
\]
\item[(2)] Suppose $J\rightarrow_{2}K$, and write $J=(j_{1},j_{2},\dots,j_{m-1},2)$
and $K=(j_{1},\dots,j_{m-1},1,1)$. Then $\Des K=(\Des J)\cup\{n-1\}$. We now consider three subcases:
\begin{enumerate}
\item Suppose that $j_{a}=2$ for some $1\leq a\leq m-1$, and let $k=j_{1}+\cdots+j_{a}-1$.
Then $(\Des J,k)\in\Omega_{n,2}$, so 
\[
F_{J}-F_{K}=F_{n,\Des J}-F_{n,(\Des J)\cup\{n-1\}}=\mathbf{f}_{\Des J,k}.
\]
\item Suppose that none of the parts $j_{1},\dots,j_{m-1}$ are equal to 2 but that $j_{a}>2$ for some $a\in[m-1]$. Let $J_{i}$ be the composition obtained from $J$ by replacing $j_{a}$ with the parts of the composition $(1^{i},j_{a}-i)$, and define $K_{i}$ in the same way. Then we have
\begin{alignat*}{1}
J & =J_{0}\rightarrow_{1}J_{1}\rightarrow_{1}J_{2}\rightarrow_{1}\cdots\rightarrow_{1}J_{j_{a}-2}\quad\text{and}\\
K & =K_{0}\rightarrow_{1}K_{1}\rightarrow_{1}K_{2}\rightarrow_{1}\cdots\rightarrow_{1}K_{j_{a}-2},
\end{alignat*}
and also $J_{j_{a}-2}\rightarrow_{2}K_{j_{a}-2}$. Furthermore, by the telescope principle, we have
\[
F_{J}-F_{K}=\sum_{i=1}^{j_{a}-2}(F_{J_{i-1}}-F_{J_{i}})+(F_{J_{j_{a}-2}}-F_{K_{j_{a}-2}})-\sum_{i=1}^{j_{a}-2}(F_{K_{i-1}}-F_{K_{i}}).
\]
From Case (1) above, we know that each of the terms $F_{J_{i-1}}-F_{J_{i}}$ and $F_{K_{i-1}}-F_{K_{i}}$ belongs to the family $(\mathbf{f}_{C,k})_{(C,K)\in\Omega_{n}}$, and from Case (2a), the same is true for $F_{J_{j_{a}-2}}-F_{K_{j_{a}-2}}$. Therefore, we have $F_{J}-F_{K}\in\Span(\mathbf{f}_{C,k})_{(C,K)\in\Omega_{n}}$.
\item Suppose that all of the parts $j_{1},\dots,j_{m-1}$ are equal to 1. Then $\Des J=[n-2]$ and $(\Des J,n-1)\in\Omega_{n,3}$, so 
\[
F_{J}-F_{K}=F_{n,[n-2]}-F_{n,[n-2]\cup\{n-1\}}=\mathbf{f}_{\Des J,n-1}.
\]
\end{enumerate}
\end{enumerate}
In all cases we have $F_{J}-F_{K}\in\Span(\mathbf{f}_{C,k})_{(C,K)\in\Omega_{n}}$, so $\Span\mathcal{F}_{n}^{\Pk}\subseteq\Span(\mathbf{f}_{C,k})_{(C,K)\in\Omega_{n}}$ by linearity. We conclude that the two spans are equal.
\end{proof}

\begin{prop} \label{p-PkmtoM}
For all $n\geq0$, we have $\Span(\mathbf{m}_{C,k})_{(C,k)\in\Omega_{n}}=\Span\mathcal{M}_{n}^{\Pk}$.
\end{prop}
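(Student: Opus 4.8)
The plan is to follow the template of the proof of Proposition \ref{p-PkftoF}, proving the two inclusions $\Span(\mathbf{m}_{C,k})_{(C,k)\in\Omega_{n}}\subseteq\Span\mathcal{M}_{n}^{\Pk}$ and $\Span\mathcal{M}_{n}^{\Pk}\subseteq\Span(\mathbf{m}_{C,k})_{(C,k)\in\Omega_{n}}$ separately. In fact I expect something slightly stronger for free: the family $\{\mathbf{m}_{C,k}:(C,k)\in\Omega_{n}\}$ and the set $\mathcal{M}_{n}^{\Pk}$ should coincide as collections of vectors, so that, unlike in Proposition \ref{p-PkftoF}, no telescoping is needed --- this is because the relations $\triangleright_{1},\triangleright_{2}$ (and the family $\mathbf{m}$) are tailored so that each generator corresponds directly to an element of $\Omega_{n}$. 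Everything runs through the dictionary between compositions of $n$ and subsets of $[n-1]$ given by the mutually inverse maps $\Comp$ and $\Des$: throughout I will translate between the relation $J\triangleright_{1}K$ (or $J\triangleright_{2}K$) and the statement $\Des K=(\Des J)\cup\{k\}$ for a suitable $k\in[n-1]$, with $\Des J$ obeying prescribed constraints near $k$.

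For the inclusion $\subseteq$, fix $(C,k)\in\Omega_{n}$ and set $J=\Comp C$, $K=\Comp(C\cup\{k\})$. If $(C,k)\in\Omega_{n,1}$, the constraints $k-2\in C\cup\{0\}$ and $k-1,k\notin C$ say that a run of $J$ starts immediately after position $k-2$ and contains positions $k-1,k,k+1$, hence is a part $j_{l}>2$ of $J$; adjoining the descent $k$ splits off a leading part $2$ from it, so $J\triangleright_{1}K$ and $\mathbf{m}_{C,k}=M_{n,C}+M_{n,C\cup\{k\}}=M_{J}+M_{K}\in\mathcal{M}_{n}^{\Pk}$. If $(C,k)\in\Omega_{n,2}$, the constraints $k-1\in C\cup\{0\}$, $k\notin C$, $k+1\in C$ (and $k+1\le n-2$ because $C\subseteq[n-2]$) say that $J$ has a non-final run of length $2$ at positions $k,k+1$, while $n-2\in C$ forces the last part of $J$ to equal $2$; adjoining $k$ turns that run into two $1$s, so $J\triangleright_{2}K$ and again $\mathbf{m}_{C,k}=M_{J}+M_{K}\in\mathcal{M}_{n}^{\Pk}$. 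If $(C,k)\in\Omega_{n,3}$, then $C=[n-2]$, $\Comp C=(1^{n-2},2)$, and $\mathbf{m}_{C,k}=M_{n,[n-2]}=M_{(1^{n-2},2)}\in\mathcal{M}_{n}^{\Pk}$. By linearity this proves the inclusion.

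For the reverse inclusion I run these identifications backwards. Given $J=(j_{1},\dots,j_{m})\triangleright_{1}K$ with $j_{l}>2$ split as $2,j_{l}-2$, I set $C=\Des J$ and $k=j_{1}+\cdots+j_{l-1}+2$; then one checks directly that $(C,k)\in\Omega_{n,1}$, that $\Des K=C\cup\{k\}$, and hence that $M_{J}+M_{K}=M_{n,C}+M_{n,C\cup\{k\}}=\mathbf{m}_{C,k}$. Given $J\triangleright_{2}K$ with the non-final part $j_{l}=2$ split as $1,1$, I set $C=\Des J$ and $k=j_{1}+\cdots+j_{l-1}+1$; again $(C,k)\in\Omega_{n,2}$, $\Des K=C\cup\{k\}$, and $M_{J}+M_{K}=\mathbf{m}_{C,k}$. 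Finally $M_{(1^{n-2},2)}=M_{n,[n-2]}=\mathbf{m}_{[n-2],n-1}$ with $([n-2],n-1)\in\Omega_{n,3}$. Hence every generator of $\mathcal{M}_{n}^{\Pk}$ equals some $\mathbf{m}_{C,k}$, which gives $\Span\mathcal{M}_{n}^{\Pk}\subseteq\Span(\mathbf{m}_{C,k})_{(C,k)\in\Omega_{n}}$. The degenerate small cases ($n\le 3$, where some $\Omega_{n,i}$ and/or the composition $(1^{n-2},2)$ are vacuous) are immediate from the standing conventions.

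The main obstacle, such as it is, is entirely bookkeeping: matching the purely set-theoretic conditions cutting out $\Omega_{n,1}$ and $\Omega_{n,2}$ against the combinatorial recipes for $\triangleright_{1}$ and $\triangleright_{2}$, keeping straight which positions are descents, verifying the non-finality of the relevant run in the $\triangleright_{2}$ case, and handling the boundary conventions at $0$ and at $n-1$. I do not anticipate any conceptual difficulty beyond that.
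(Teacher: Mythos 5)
Your proposal is correct and follows essentially the same route as the paper's proof: both inclusions are established by the direct dictionary $(C,k)\leftrightarrow(J,K)$ via $C=\Des J$, $C\cup\{k\}=\Des K$, matching $\Omega_{n,1}$ with $\triangleright_1$, $\Omega_{n,2}$ with $\triangleright_2$, and $\Omega_{n,3}$ with $(1^{n-2},2)$, with no telescoping required. The paper phrases it as two inclusions rather than your stronger "coincide as collections" observation, but the computations are identical.
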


\begin{proof}
Fix $(C,k)\in\Omega_{n}$. Then one can readily verify the following:
\begin{enumerate}
\item[(1)] If $(C,k)\in\Omega_{n,1}$, then $\Comp C\triangleright_{1}\Comp(C\cup\{k\})$ and so taking $J=\Comp C$ and $K=\Comp(C\cup\{k\})$ yields
\[
\mathbf{m}_{C,k}=M_{n,C}+M_{n,C\cup\{k\}}=M_{J}+M_{K}\in\mathcal{M}_{n}^{\Pk}.
\]
\item[(2)] If $(C,k)\in\Omega_{n,2}$, then $\Comp C\triangleright_{2}\Comp(C\cup\{k\})$ and so taking $J=\Comp C$ and $K=\Comp(C\cup\{k\})$ yields
\[
\mathbf{m}_{C,k}=M_{n,C}+M_{n,C\cup\{k\}}=M_{J}+M_{K}\in\mathcal{M}_{n}^{\Pk}.
\]
\item[(3)] If $(C,k)\in\Omega_{n,3}$, then $\Comp C=(1^{n-2},2)$ and so
\[
\mathbf{m}_{C,k}=M_{n,C}=M_{(1^{n-2},2)}\in\mathcal{M}_{n}^{\Pk}.
\]
\end{enumerate}
Since $\mathbf{m}_{C,k}\in\mathcal{M}_{n}^{\Pk}$ for all $(C,k)\in\Omega_{n}$, we have $\Span(\mathbf{m}_{C,k})_{(C,k)\in\Omega_{n}}\subseteq\Span\mathcal{M}_{n}^{\Pk}$ by linearity.

Conversely, fix $J,K\vDash n$ satisfying $J\triangleright_{1}K$ or $J\triangleright_{2}K$.
\begin{enumerate}
\item[(1)] Suppose $J\triangleright_{1}K$, so that we may write $J=(j_{1},j_{2},\dots,j_{m})$ and $K=(j_{1},\dots,j_{l-1},2,j_{l}-2,j_{l+1},\dots,j_{m})$ where $j_{l}>2$. Taking $k=j_{1}+\cdots+j_{l-1}+2$, we have $\Des K=(\Des J)\cup\{k\}$ and $(\Des J,k)\in\Omega_{n,1}$, so 
\[
M_{J}+M_{K}=M_{n,\Des J}+M_{n,(\Des J)\cup\{k\}}=\mathbf{m}_{\Des J,k}.
\]
\item[(2)] Suppose $J\triangleright_{2}K$, so that we may write $J=(j_{1},j_{2},\dots,j_{m-1},2)$ where $j_{l}=2$ for some $l\in[m-1]$, and $K=(j_{1},\dots,j_{l-1},1,1,j_{l+1},\dots,j_{m-1},2)$. Taking $k=j_{1}+\cdots+j_{l-1}+1$, we have $\Des K=(\Des J)\cup\{k\}$ and $(\Des J,k)\in\Omega_{n,2}$, so 
\[
M_{J}+M_{K}=M_{n,\Des J}+M_{n,(\Des J)\cup\{k\}}=\mathbf{m}_{\Des J,k}.
\]
\end{enumerate}
Moreover, if $J=(1^{n-2},2)$, then we have $\Des J=[n-2]$ and $(\Des J,n-1)\in\Omega_{n,3}$, which means $M_{J}=\mathbf{m}_{\Des J,n-1}$. We thus have $\Span\mathcal{M}_{n}^{\Pk}\subseteq\Span(\mathbf{m}_{C,k})_{(C,k)\in\Omega_{n}}$ by linearity, and the proof is complete.
\end{proof}

\begin{prop} \label{p-Pkftom}
For all $n\geq0$, we have $\Span(\mathbf{f}_{C,k})_{(C,k)\in\Omega_{n}}=\Span(\mathbf{m}_{C,k})_{(C,k)\in\Omega_{n}}$.
\end{prop}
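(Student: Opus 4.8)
The plan is to deduce the proposition from Lemma~\ref{l-invtri} by showing that the family $(\mathbf{m}_{C,k})_{(C,k)\in\Omega_{n}}$ expands invertibly triangularly in the family $(\mathbf{f}_{C,k})_{(C,k)\in\Omega_{n}}$. I would equip the finite set $\Omega_{n}$ with the partial order in which $(C,k)\leq(C',k')$ if and only if $k=k'$ and $C\subseteq C'$ (this is plainly reflexive, antisymmetric, and transitive), and aim to prove that for each $(C,k)\in\Omega_{n}$ one can write
\[
\mathbf{m}_{C,k}=\mathbf{f}_{C,k}+\sum_{(B,k)>(C,k)}c_{B}\,\mathbf{f}_{B,k}
\]
for suitable $c_{B}\in\mathbb{Q}$. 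Lemma~\ref{l-invtri} then gives $\Span(\mathbf{m}_{C,k})_{(C,k)\in\Omega_{n}}=\Span(\mathbf{f}_{C,k})_{(C,k)\in\Omega_{n}}$, which is the assertion.

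The expansions themselves come from the change-of-basis identities in Lemma~\ref{l-MtoF}, applied separately to the three pieces of $\Omega_{n}$. For $(C,k)\in\Omega_{n,1}$ one has $k\notin C$ and $k-1\notin C\cup\{0\}$ (the latter because the condition $k-2\in C\cup\{0\}$ forces $k\geq2$), so Lemma~\ref{l-MtoF}(c) yields
\[
\mathbf{m}_{C,k}=M_{n,C}+M_{n,C\cup\{k\}}=\sum_{\substack{C\subseteq B\subseteq[n-1]\\ k,\,k-1\notin B}}(-1)^{|B\setminus C|}\bigl(F_{n,B}-F_{n,B\cup\{k-1\}}\bigr).
\]
For $(C,k)\in\Omega_{n,2}$ one applies Lemma~\ref{l-MtoF}(b) and then pairs each term indexed by $B\subseteq[n-2]$ with the term indexed by $B\cup\{n-1\}$ (using $n-1\notin C$, whence the signs cancel) to obtain
\[
\mathbf{m}_{C,k}=\sum_{\substack{C\subseteq B\subseteq[n-2]\\ k\notin B}}(-1)^{|B\setminus C|}\bigl(F_{n,B}-F_{n,B\cup\{n-1\}}\bigr),
\]
and for the unique pair $([n-2],n-1)\in\Omega_{n,3}$, Lemma~\ref{l-MtoF}(a) gives directly $\mathbf{m}_{[n-2],n-1}=F_{n,[n-2]}-F_{n,[n-1]}=\mathbf{f}_{[n-2],n-1}$.

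The main step, and the only place where real care is needed, is the bookkeeping that turns these identities into the claimed triangular form: one must verify that for every $B$ occurring in the sums above, the pair $(B,k)$ again lies in $\Omega_{n,1}$ (resp. $\Omega_{n,2}$), so that $F_{n,B}-F_{n,B\cup\{k-1\}}$ (resp. $F_{n,B}-F_{n,B\cup\{n-1\}}$) really equals $\mathbf{f}_{B,k}$, and that $(B,k)\geq(C,k)$ in our order, with strict inequality unless $B=C$. For $\Omega_{n,1}$ this reduces to noting that $k-2\in C\cup\{0\}\subseteq B\cup\{0\}$ and that $B\neq[n-1]$ since $k\notin B$; for $\Omega_{n,2}$ one checks that $n-2\in C\subseteq B$, $k+1\in C\subseteq B$, $k-1\in C\cup\{0\}\subseteq B\cup\{0\}$, and $B\subsetneq[n-2]$ since $k\notin B$ while $k\leq n-3$. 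Pulling out the $B=C$ term (which contributes exactly $\mathbf{f}_{C,k}$ with coefficient $1$) then produces the displayed invertibly triangular expansion. These membership checks are routine but fiddly, precisely because they hinge on the constraints built into the definitions of $\Omega_{n,1}$ and $\Omega_{n,2}$; beyond them the argument is immediate from Lemma~\ref{l-invtri}.
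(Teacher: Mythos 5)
Your proof is correct and follows essentially the same route as the paper: equip $\Omega_{n}$ with a partial order, verify via Lemma~\ref{l-MtoF} that each $\mathbf{m}_{C,k}$ expands with leading coefficient $1$ on $\mathbf{f}_{C,k}$ and remaining terms strictly greater, then invoke Lemma~\ref{l-invtri}. The only (immaterial) divergence is in the choice of order: the paper takes the disjoint union of the three posets $\Omega_{n,1}$, $\Omega_{n,2}$, $\Omega_{n,3}$, so that pairs from different $\Omega_{n,i}$ are always incomparable, whereas you use the single relation ``$k=k'$ and $C\subseteq C'$'' uniformly on all of $\Omega_{n}$. Your coarser order is still a valid partial order and still yields an invertibly triangular expansion, since every term $(B,k)$ appearing in each expansion lies in the same $\Omega_{n,i}$ as $(C,k)$ and satisfies $C\subsetneq B$ with the same $k$; so the conclusion is unaffected.
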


\begin{proof}
Let us define a partial order on each $\Omega_{n,i}$ by setting $(B,k)\geq(C,l)$ if $k=l$ and $C\subseteq B$. Then, we endow $\Omega_{n}$ with the partial order obtained by taking the disjoint union of the posets $\Omega_{n,1}$, $\Omega_{n,2}$, and $\Omega_{n,3}$, so that $(B,k)$ and $(C,l)$ are incomparable if they are not in the same $\Omega_{n,i}$. By Lemma \ref{l-invtri}, it suffices to show that $(\mathbf{m}_{C,k})_{(C,k)\in\Omega_{n}}$ expands invertibly triangularly in $(\mathbf{f}_{C,k})_{(C,k)\in\Omega_{n}}$ with respect to this partial order.

First, fix $(C,k)\in\Omega_{n,1}$. It is straightforward to show that
\[
\{\,(B,k):C\subseteq B\subseteq[n-1],\,k-1\notin B,\,k\notin B\,\}=\{\,(B,l)\in\Omega_{n,1}:(B,l)\geq(C,k)\,\};
\]
by this set equality and Lemma \ref{l-MtoF} (c), we have
\begin{align*}
\mathbf{m}_{C,k} & =M_{n,C}+M_{n,C\cup\{k\}}\\
 & =\sum_{\substack{C\subseteq B\subseteq[n-1]\\
k-1,k\notin B
}
}(-1)^{\left|B\backslash C\right|}(F_{n,B}-F_{n,B\cup\{k-1\}})\\
 & =\sum_{\substack{(B,l)\in\Omega_{n,1}\\
(B,l)\geq(C,k)
}
}(-1)^{\left|B\backslash C\right|}\mathbf{f}_{B,l}\\
 & =\text{\ensuremath{\mathbf{f}_{C,k}}+}\sum_{\substack{(B,l)\in\Omega_{n}\\
(B,l)>(C,k)
}
}(-1)^{\left|B\backslash C\right|}\mathbf{f}_{B,l}.
\end{align*}

Second, fix $(C,k)\in\Omega_{n,2}$. By Lemma \ref{l-MtoF} (b), we have
\begin{align*}
\mathbf{m}_{C,k} & =M_{n,C}+M_{n,C\cup\{k\}}\\
 & =\sum_{\substack{C\subseteq B\subseteq[n-1]\\
k\notin B
}
}(-1)^{\left|B\backslash C\right|}F_{n,B}\\
 & =\sum_{\substack{C\subseteq B\subseteq[n-1]\\
k,n-1\notin B
}
}(-1)^{\left|B\backslash C\right|}F_{n,B}+\sum_{\substack{C\subseteq B\subseteq[n-1]\\
k\notin B,\,n-1\in B
}
}(-1)^{\left|B\backslash C\right|}F_{n,B}.
\end{align*}
Recall that that $(C,k)\in\Omega_{n,2}$ implies $k\neq n-1$. By toggling the inclusion of $n-1$ in $B$, we see that the sets $B$ satisfying $C\subseteq B\subseteq[n-1]$ and $k,n-1\notin B$ are in bijection with those satisfying $C\subseteq B\subseteq[n-1]$, $k\notin B$, and $n-1\in B$. Thus, we can write
\begin{align*}
\mathbf{m}_{C,k} & =\sum_{\substack{C\subseteq B\subseteq[n-1]\\
k,n-1\notin B
}
}(-1)^{\left|B\backslash C\right|}F_{n,B}+\sum_{\substack{C\subseteq B\subseteq[n-1]\\
k,n-1\notin B
}
}(-1)^{\left|(B\cup\{n-1\})\backslash C\right|}F_{n,B\cup\{n-1\}}\\
 & =\sum_{\substack{C\subseteq B\subseteq[n-1]\\
k,n-1\notin B
}
}(-1)^{\left|B\backslash C\right|}(F_{n,B}-F_{n,B\cup\{n-1\}}).
\end{align*}
A routine argument yields the set equality
\[
\{\,(B,k):C\subseteq B\subseteq[n-1],\,k\notin B,\,n-1\notin B\,\}=\{\,(B,l)\in\Omega_{n,2}:(B,l)\geq(C,k)\,\},
\]
whence
\[
\mathbf{m}_{C,k}=\sum_{\substack{(B,l)\in\Omega_{n,2}\\
(B,l)\geq(C,k)
}
}(-1)^{\left|B\backslash C\right|}\mathbf{f}_{B,l}=\mathbf{f}_{C,k}+\sum_{\substack{(B,l)\in\Omega_{n}\\
(B,l)>(C,k)
}
}(-1)^{\left|B\backslash C\right|}\mathbf{f}_{B,l}.
\]

Lastly, fix $(C,k)\in\Omega_{n,3}$, so that $C=[n-2]$ and $k=n-1$. Then from Lemma \ref{l-MtoF} (a), we obtain
\begin{align*}
\mathbf{m}_{C,k} & =M_{n,[n-2]}=F_{n,[n-2]}-F_{n,[n-1]}=F_{n,C}-F_{n,C\cup\{n-1\}}=\mathbf{f}_{C,k}.
\end{align*}
We have now shown that $(\mathbf{m}_{C,k})_{(C,k)\in\Omega_{n}}$ expands invertibly triangularly in $(\mathbf{f}_{C,k})_{(C,k)\in\Omega_{n}}$, so the result follows from Lemma \ref{l-invtri}.
\end{proof}

We are now ready to give the proof of Theorem \ref{t-M} (a).

\begin{proof}[Proof of Theorem \ref{t-M} \textup{(}a\textup{)}]
We have
\begin{align*}
\mathcal{K}_{n}^{\Pk} & =\Span\mathcal{F}_{n}^{\Pk} & \text{(by Theorem \ref{t-F} (a))}\\
 & =\Span(\mathbf{f}_{C,k})_{(C,k)\in\Omega_{n}} & \text{(by Proposition \ref{p-PkftoF})}\\
 & =\Span(\mathbf{m}_{C,k})_{(C,k)\in\Omega_{n}} & \text{(by Proposition \ref{p-Pkftom})}\\
 & =\Span\mathcal{M}_{n}^{\Pk} & \text{(by Proposition \ref{p-PkmtoM})}
\end{align*}
for all $n\geq0$. Therefore,
\begin{align*}
\mathcal{K}^{\Pk} & =\bigoplus_{n=0}^{\infty}\mathcal{K}_{n}^{\Pk}=\bigoplus_{n=0}^{\infty}\Span\mathcal{M}_{n}^{\Pk}\\
 & =\Span\left(\{\,M_{J}+M_{K}:J\triangleright_{1}K\text{ or }J\triangleright_{2}K\,\}\cup\{\,M_{J}:J\in\mathcal{\tilde{C}}\,\}\right).\qedhere
\end{align*}
\end{proof}

\subsection{\label{ss-Mpk}The peak number\textemdash proof of Theorem \ref{t-M} (b)}

Let us now proceed to $\mathcal{K}^{\pk}$. Define 
\begin{align*}
\mathcal{F}_{n}^{\pk} & \coloneqq\{\,F_{J}-F_{K}:J\rightarrow_{\pk}K\text{ and }J,K\vDash n\,\}\quad\text{and}\\
\mathcal{M}_{n}^{\pk} & \coloneqq\{M_{J}+M_{K}:J\triangleright_{1}K\text{ or }J\triangleright_{2}K,\text{ and }J,K\vDash n\}\cup\{M_{(1^{n-2},2)}\}\\
 & \qquad\qquad\qquad\qquad\qquad\qquad\qquad\qquad\cup\{\,M_{J}-M_{K}:J\rightarrow_{3}K\text{ and }J,K\vDash n\,\},
\end{align*}
so that we wish to prove $\mathcal{K}_{n}^{\pk}=\Span\mathcal{M}_{n}^{\pk}$ for all $n$. 

Recall the definitions of $\Omega_{n,1}$, $\Omega_{n,2}$, and $\Omega_{n,3}$ from Section \ref{ss-MPk}, and define $\Omega_{n,4}\subseteq2^{[n-1]}\times[n-1]$ by
\begin{alignat*}{1}
\Omega_{n,4} & \coloneqq\big\{\,(C,k):n-1\in C,\:k-1\in C\cup\{0\},\:k\in C,\:k+1\notin C,\:k+2\in C,\\
 & \qquad\qquad\qquad\text{and }j\in(C\cup\{0\})\backslash\{n-1\}\implies j+1\in C\text{ or }j+2\in C\,\big\}.
\end{alignat*}
Note that $\Omega_{n,4}$ is empty for $n\leq3$, and that $\Omega_{n,4}$ is disjoint from $\Omega_{n,1}$, $\Omega_{n,2}$, and $\Omega_{n,3}$. Write 
\[
\Theta_{n}\coloneqq\Omega_{n,1}\sqcup\Omega_{n,2}\sqcup\Omega_{n,3}\sqcup\Omega_{n,4}=\Omega_{n}\sqcup\Omega_{n,4}.
\]

Next, we shall expand the definitions of $\mathbf{f}_{C,k}$ and $\mathbf{m}_{C,k}$ from Section \ref{ss-MPk} to all $(C,k)\in\Theta_{n}$. Let
\[
\mathbf{f}_{C,k}\coloneqq\begin{cases}
F_{n,C}-F_{n,C\cup\{k-1\}}, & \text{if }(C,k)\in\Omega_{n,1},\\
F_{n,C}-F_{n,C\cup\{n-1\}}, & \text{if }(C,k)\in\Omega_{n,2}\text{ or }(C,k)\in\Omega_{n,3},\\
F_{n,C}-F_{n,(C\cup\{k+1\})\backslash\{k\}}, & \text{if }(C,k)\in\Omega_{n,4},
\end{cases}
\]
and 
\[
\mathbf{m}_{C,k}\coloneqq\begin{cases}
M_{n,C}+M_{n,C\cup\{k\}}, & \text{if }(C,k)\in\Omega_{n,1}\text{ or }(C,k)\in\Omega_{n,2},\\
M_{n,C}, & \text{if }(C,k)\in\Omega_{n,3},\\
M_{n,C}-M_{n,(C\cup\{k+1\})\backslash\{k\}}, & \text{if }(C,k)\in\Omega_{n,4}.
\end{cases}
\]
The proof of the next lemma is routine and so it is omitted.

\begin{lem} \label{l-Om4}
For all $n\geq0$ and $J,K\vDash n$, we have $J\rightarrow_{3}K$ if and only if $\Des K=(\Des J\cup\{k+1\})\backslash\{k\}$ and $(\Des J,k)\in\Omega_{n,4}$ for some $k\in [n-1]$.
\end{lem}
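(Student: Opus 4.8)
The plan is to prove the equivalence by translating both conditions into the language of partial sums, using that $\Des$ and $\Comp$ are inverse bijections between compositions of $n$ and subsets of $[n-1]$. Write $J=(j_1,\dots,j_m)$ and set $i_0\coloneqq 0$, $i_m\coloneqq n$, and $i_t\coloneqq j_1+\cdots+j_t$, so that $\Des J=\{i_1,\dots,i_{m-1}\}$ and $j_t=i_t-i_{t-1}$. For $n\leq 3$ the set $\Omega_{n,4}$ is empty and no composition of $n$ admits a $\rightarrow_3$-relation, so the equivalence holds vacuously; hence we may assume $n\geq 4$.

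For the forward direction, I would start from $J\rightarrow_3 K$: all parts of $J$ are at most $2$, $j_m=1$, and there is $l\in[m-2]$ with $j_l=1$, $j_{l+1}=2$, and $K=(j_1,\dots,j_{l-1},2,1,j_{l+2},\dots,j_m)$. Setting $k\coloneqq i_l=i_{l-1}+1$, a short computation of the partial sums of $K$ shows that $\Des K$ agrees with $\Des J$ except that $k$ is replaced by $k+1$; that is, $\Des K=(\Des J\cup\{k+1\})\setminus\{k\}$. It then remains to check the six membership conditions defining $\Omega_{n,4}$ for the pair $(\Des J,k)$: $n-1\in\Des J$ follows from $j_m=1$; $k-1\in\Des J\cup\{0\}$ and $k\in\Des J$ follow from $j_l=1$ and $l\leq m-1$; $k+1\notin\Des J$ and $k+2\in\Des J$ follow from $j_{l+1}=2$ and $l+1\leq m-1$; and the ``chain'' condition stating that every $j\in(\Des J\cup\{0\})\setminus\{n-1\}$ satisfies $j+1\in\Des J$ or $j+2\in\Des J$ is exactly the assertion, phrased in descent-set language, that consecutive partial sums differ by at most $2$, i.e., that every part of $J$ is at most $2$.

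For the converse, I would run the same dictionary in reverse: assume $(\Des J,k)\in\Omega_{n,4}$ and $\Des K=(\Des J\cup\{k+1\})\setminus\{k\}$. The chain condition forces all parts of $J$ to be at most $2$; $n-1\in\Des J$ forces $j_m=1$; writing $k=i_l$ (the unique such $l\in[m-1]$), the condition $k-1\in\Des J\cup\{0\}$ forces $j_l=1$, and $k+1\notin\Des J$ together with $k+2\in\Des J$ forces $j_{l+1}=2$ and $l+1\leq m-1$, hence $l\in[m-2]$. Finally, since $k\in\Des J$ and $k+1\notin\Des J$, we have $\Des K=(\Des J\setminus\{k\})\cup\{k+1\}$, and applying $\Comp$ to this set recovers $K=(j_1,\dots,j_{l-1},j_{l+1},j_l,j_{l+2},\dots,j_m)$, which is precisely the composition appearing in the definition of $J\rightarrow_3 K$.

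The whole argument is routine bookkeeping, and I expect no genuine obstacle; the only points that need care are the edge case $l=1$ (where $i_{l-1}=i_0=0$), the small-$n$ cases disposed of at the outset, and confirming that the somewhat opaque chain condition in the definition of $\Omega_{n,4}$ really is equivalent to ``every part of $J$ is at most $2$'' rather than something slightly stronger or weaker.
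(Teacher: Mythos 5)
Your proof is correct, and it supplies precisely the routine descent-set/composition bookkeeping that the paper omits (the authors state only that "the proof of the next lemma is routine and so it is omitted"). The dictionary $k=i_l$, with $n-1\in\Des J$ giving $j_m=1$, the chain condition giving $j_1,\dots,j_{m-1}\le 2$, the pair $k-1\in\Des J\cup\{0\}$, $k\in\Des J$ giving $j_l=1$, and $k+1\notin\Des J$, $k+2\in\Des J$ giving $j_{l+1}=2$ and $l\le m-2$, together with the computation of $\Comp\bigl((\Des J\cup\{k+1\})\setminus\{k\}\bigr)$, is exactly what one would write out. Two small points worth tightening: the chain condition by itself only bounds the non-final parts $j_1,\dots,j_{m-1}$ by $2$; it is the separate hypothesis $n-1\in\Des J$ that forces $j_m=1$, so the combined conditions give "all parts $\le 2$" (you do handle this correctly in the converse but conflate them slightly in the forward direction). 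Also, in the converse you should note explicitly that $k+2\in\Des J\subseteq[n-1]$ rules out $l=m-1$ (since then $k=n-1$ would give $k+2>n-1$), which is how $l\in[m-2]$ is obtained.
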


\begin{prop} \label{p-pkftoF}
For all $n\geq0$, we have $\Span(\mathbf{f}_{C,k})_{(C,k)\in\Theta_{n}}=\Span\mathcal{F}_{n}^{\pk}$.
\end{prop}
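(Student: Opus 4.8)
The plan is to follow the same two-inclusion strategy used in the proof of Proposition \ref{p-PkftoF}, handling the three families $\Omega_{n,1}$, $\Omega_{n,2}$, $\Omega_{n,3}$ exactly as there and dispatching the new family $\Omega_{n,4}$ via Lemma \ref{l-Om4}.

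For the inclusion $\Span(\mathbf{f}_{C,k})_{(C,k)\in\Theta_{n}}\subseteq\Span\mathcal{F}_{n}^{\pk}$, I would argue that each $\mathbf{f}_{C,k}$ lies in $\mathcal{F}_{n}^{\pk}$. If $(C,k)$ belongs to $\Omega_{n,1}$, $\Omega_{n,2}$, or $\Omega_{n,3}$, the computations in Proposition \ref{p-PkftoF} show $\mathbf{f}_{C,k}\in\mathcal{F}_{n}^{\Pk}$; since $J\rightarrow_{\Pk}K$ implies $J\rightarrow_{\pk}K$ we have $\mathcal{F}_{n}^{\Pk}\subseteq\mathcal{F}_{n}^{\pk}$, so $\mathbf{f}_{C,k}\in\mathcal{F}_{n}^{\pk}$. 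If $(C,k)\in\Omega_{n,4}$, Lemma \ref{l-Om4} gives $\Comp C\rightarrow_{3}\Comp((C\cup\{k+1\})\backslash\{k\})$, so taking $J=\Comp C$ and $K=\Comp((C\cup\{k+1\})\backslash\{k\})$ yields $\mathbf{f}_{C,k}=F_{n,C}-F_{n,(C\cup\{k+1\})\backslash\{k\}}=F_{J}-F_{K}\in\mathcal{F}_{n}^{\pk}$. Linearity then gives the inclusion.

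For the reverse inclusion I would fix $J,K\vDash n$ with $J\rightarrow_{\pk}K$ and split on which of $\rightarrow_{1}$, $\rightarrow_{2}$, $\rightarrow_{3}$ holds. If $J\rightarrow_{1}K$ or $J\rightarrow_{2}K$, the argument from Proposition \ref{p-PkftoF}—including the three subcases for $\rightarrow_{2}$ and the telescoping expansion used when the relevant part exceeds $2$—applies verbatim to give $F_{J}-F_{K}\in\Span(\mathbf{f}_{C,k})_{(C,k)\in\Omega_{n}}\subseteq\Span(\mathbf{f}_{C,k})_{(C,k)\in\Theta_{n}}$. If $J\rightarrow_{3}K$, then Lemma \ref{l-Om4} furnishes $k\in[n-1]$ with $(\Des J,k)\in\Omega_{n,4}$ and $\Des K=(\Des J\cup\{k+1\})\backslash\{k\}$, whence $F_{J}-F_{K}=\mathbf{f}_{\Des J,k}$. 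In every case $F_{J}-F_{K}\in\Span(\mathbf{f}_{C,k})_{(C,k)\in\Theta_{n}}$, and linearity completes the reverse inclusion, hence the equality.

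Since almost all of the work has already been done in Proposition \ref{p-PkftoF} and Lemma \ref{l-Om4}, I do not anticipate a real obstacle here; the only point requiring care is checking that the descent-set bookkeeping in Lemma \ref{l-Om4} genuinely matches $\mathbf{f}_{C,k}$ for $(C,k)\in\Omega_{n,4}$ to a generator $F_J-F_K$ coming from a $\rightarrow_{3}$ relation, and that none of the telescoping steps inherited from the $\rightarrow_{2}$ analysis appeals to a generator outside $\Omega_{n}\subseteq\Theta_{n}$.
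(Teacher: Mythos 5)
Your proposal is correct and follows essentially the same route as the paper: reduce to Proposition~\ref{p-PkftoF} for the $\Omega_{n,1}\sqcup\Omega_{n,2}\sqcup\Omega_{n,3}$ part and use Lemma~\ref{l-Om4} to match $\Omega_{n,4}$ with the $\rightarrow_{3}$ generators. The paper phrases this as "it suffices to show $\Span(\mathbf{f}_{C,k})_{(C,k)\in\Omega_{n,4}}=\Span\{F_{J}-F_{K}:J\rightarrow_{3}K\}$" and omits the two explicit inclusions, but the content is the same as yours.
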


\begin{proof}
In light of Proposition \ref{p-PkftoF}, it suffices to show that
\begin{equation}
\Span(\mathbf{f}_{C,k})_{(C,k)\in\Omega_{n,4}}=\Span\{\,F_{J}-F_{K}:J\rightarrow_{3}K\text{ and }J,K\vDash n\,\}.\label{e-spansfF}
\end{equation}
Fix $(C,k)\in\Omega_{n,4}$. Let $J=\Comp C$ and $K=\Comp((C\cup\{k+1\})\backslash\{k\})$, so that $\Des K=(\Des J\cup\{k+1\})\backslash\{k\}$ and $(\Des J,k)\in\Omega_{n,4}$. By Lemma \ref{l-Om4}, we have
\begin{align*}
\mathbf{f}_{C,k} & =F_{n,C}-F_{n,(C\cup\{k+1\})\backslash\{k\}}=F_{J}-F_{K}
\end{align*}
where $J\rightarrow_{3}K$, so the forward inclusion of (\ref{e-spansfF}) follows from linearity. The reverse inclusion is similar.
\end{proof}

\begin{prop} \label{p-pkmtoM}
For all $n\geq0$, we have $\Span(\mathbf{m}_{C,k})_{(C,k)\in\Theta_{n}}=\Span\mathcal{M}_{n}^{\pk}$.
\end{prop}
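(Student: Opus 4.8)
The plan is to reduce to Proposition~\ref{p-PkmtoM} in exactly the way that Proposition~\ref{p-pkftoF} reduces to Proposition~\ref{p-PkftoF}. Recall that $\Theta_{n}=\Omega_{n}\sqcup\Omega_{n,4}$, so the family $(\mathbf{m}_{C,k})_{(C,k)\in\Theta_{n}}$ is the concatenation of $(\mathbf{m}_{C,k})_{(C,k)\in\Omega_{n}}$ and $(\mathbf{m}_{C,k})_{(C,k)\in\Omega_{n,4}}$; likewise $\mathcal{M}_{n}^{\pk}$ is the union of $\mathcal{M}_{n}^{\Pk}$ with $\{\,M_{J}-M_{K}:J\rightarrow_{3}K\text{ and }J,K\vDash n\,\}$. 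Since the span of a concatenation (resp.\ union) of families (resp.\ sets) is the sum of the spans, and since Proposition~\ref{p-PkmtoM} already gives $\Span(\mathbf{m}_{C,k})_{(C,k)\in\Omega_{n}}=\Span\mathcal{M}_{n}^{\Pk}$, it will suffice to prove the single identity
\[
\Span(\mathbf{m}_{C,k})_{(C,k)\in\Omega_{n,4}}=\Span\{\,M_{J}-M_{K}:J\rightarrow_{3}K\text{ and }J,K\vDash n\,\}
\]
and then add spans.

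For the forward inclusion I would fix $(C,k)\in\Omega_{n,4}$, put $J=\Comp C$ and $K=\Comp((C\cup\{k+1\})\backslash\{k\})$, and observe that $\Des J=C$ and $\Des K=(\Des J\cup\{k+1\})\backslash\{k\}$ with $(\Des J,k)\in\Omega_{n,4}$; Lemma~\ref{l-Om4} then yields $J\rightarrow_{3}K$ with $J,K\vDash n$, and by the definition of $\mathbf{m}_{C,k}$ on $\Omega_{n,4}$ we get $\mathbf{m}_{C,k}=M_{n,C}-M_{n,(C\cup\{k+1\})\backslash\{k\}}=M_{J}-M_{K}$, which lies in the right-hand span. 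For the reverse inclusion, given $J\rightarrow_{3}K$ with $J,K\vDash n$, Lemma~\ref{l-Om4} produces $k\in[n-1]$ with $\Des K=(\Des J\cup\{k+1\})\backslash\{k\}$ and $(\Des J,k)\in\Omega_{n,4}$, so $M_{J}-M_{K}=\mathbf{m}_{\Des J,k}$ lies in the left-hand span. By linearity both inclusions follow, and combining with Proposition~\ref{p-PkmtoM} gives
\[
\Span(\mathbf{m}_{C,k})_{(C,k)\in\Theta_{n}}=\Span\mathcal{M}_{n}^{\Pk}+\Span\{\,M_{J}-M_{K}:J\rightarrow_{3}K\,\}=\Span\mathcal{M}_{n}^{\pk}.
\]

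I do not expect a genuine obstacle here. The only delicate point is the dictionary between the relation $\rightarrow_{3}$ on compositions of $n$ and the set-theoretic description of $\Omega_{n,4}$ in terms of subsets $C\subseteq[n-1]$ and elements $k$, but this is exactly what Lemma~\ref{l-Om4} (which we may assume) records; granted that, the present proposition is essentially a transcription of the proof of Proposition~\ref{p-pkftoF} with $M$ written in place of $F$ throughout, and with Proposition~\ref{p-PkmtoM} substituted for Proposition~\ref{p-PkftoF}.
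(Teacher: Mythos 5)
Your proof is correct and takes essentially the same route as the paper: the paper likewise reduces to showing $\Span(\mathbf{m}_{C,k})_{(C,k)\in\Omega_{n,4}}=\Span\{\,M_{J}-M_{K}:J\rightarrow_{3}K\text{ and }J,K\vDash n\,\}$ via Lemma~\ref{l-Om4} and then invokes Proposition~\ref{p-PkmtoM}, remarking only that the remaining argument mirrors that of \eqref{e-spansfF}. You have simply written out the details the paper omits, and they check out.
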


\begin{proof}
Given Proposition \ref{p-PkmtoM}, it suffices to show that 
\[
\Span(\mathbf{m}_{C,k})_{(C,k)\in\Omega_{n,4}}=\Span\{\,M_{J}-M_{K}:J\rightarrow_{3}K\text{ and }J,K\vDash n\,\},
\]
but this is proved in the same way as (\ref{e-spansfF}) and so we omit the details.
\end{proof}

\begin{prop} \label{p-pkftom}
For all $n\geq0$, we have $\Span(\mathbf{f}_{C,k})_{(C,k)\in\Theta_{n}}=\Span(\mathbf{m}_{C,k})_{(C,k)\in\Theta_{n}}$.
\end{prop}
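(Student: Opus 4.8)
The plan is to mimic the proof of Proposition~\ref{p-Pkftom}: I would endow $\Theta_{n}$ with a partial order and show that $(\mathbf{m}_{C,k})_{(C,k)\in\Theta_{n}}$ expands invertibly triangularly in $(\mathbf{f}_{C,k})_{(C,k)\in\Theta_{n}}$, so that Lemma~\ref{l-invtri} gives the claim. Since $\Theta_{n}=\Omega_{n}\sqcup\Omega_{n,4}$, the natural choice is the disjoint union of the partial order on $\Omega_{n}$ used in Proposition~\ref{p-Pkftom} with the analogous partial order on $\Omega_{n,4}$ given by $(B,k)\geq(C,l)$ if and only if $k=l$ and $C\subseteq B$; in particular every element of $\Omega_{n,4}$ is incomparable to every element of $\Omega_{n}$. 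For $n\leq3$ we have $\Theta_{n}=\Omega_{n}$ and the statement reduces at once to Proposition~\ref{p-Pkftom}, so we may assume $n\geq4$.

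For $(C,k)\in\Omega_{n,1}$, $\Omega_{n,2}$, or $\Omega_{n,3}$, the computations in the proof of Proposition~\ref{p-Pkftom} apply word for word and express $\mathbf{m}_{C,k}$ as $\mathbf{f}_{C,k}$ plus a $\mathbb{Q}$-combination of $\mathbf{f}_{B,l}$ with $(B,l)\in\Omega_{n}$ and $(B,l)>(C,k)$; these inequalities still hold in $\Theta_{n}$, so nothing changes. All the work is therefore in the case $(C,k)\in\Omega_{n,4}$. Here I would write $C'=(C\cup\{k+1\})\backslash\{k\}$, so that $\mathbf{m}_{C,k}=M_{n,C}-M_{n,C'}$ and $\mathbf{f}_{C,k}=F_{n,C}-F_{n,C'}$, and first establish the identity
\[
\mathbf{m}_{C,k}=\sum_{E\subseteq[n-1]\backslash(C\cup\{k+1\})}(-1)^{|E|}\bigl(F_{n,C\cup E}-F_{n,(C\cup E\cup\{k+1\})\backslash\{k\}}\bigr)
\]
by a direct computation in the spirit of Proposition~\ref{p-Pkftom}: expand $M_{n,C}$ and $M_{n,C'}$ into fundamentals via Lemma~\ref{l-MtoF}(a), group the terms by $B\cap\{k,k+1\}$, observe that the terms with $\{k,k+1\}\subseteq B$ occur with equal sign in $M_{n,C}$ and $M_{n,C'}$ and hence cancel in the difference, and pair the term indexed by $B=C\cup E$ with the term indexed by $B=C'\cup E=(C\cup E\cup\{k+1\})\backslash\{k\}$.

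It then remains to show that $(C\cup E,k)\in\Omega_{n,4}$ for every $E\subseteq[n-1]\backslash(C\cup\{k+1\})$, and conversely that every $(B,k)\in\Omega_{n,4}$ with $(B,k)\geq(C,k)$ is of this form; granting this, each summand above equals $\mathbf{f}_{C\cup E,k}$ with $(C\cup E,k)\in\Omega_{n,4}\subseteq\Theta_{n}$, the $E=\emptyset$ term being $\mathbf{f}_{C,k}$ and all other terms being strictly above $(C,k)$, so the displayed identity is exactly the invertibly triangular expansion required for Lemma~\ref{l-invtri}. Every defining condition of $\Omega_{n,4}$ other than the chain condition is clearly inherited when one passes from $C$ to a superset avoiding $k+1$; the chain condition for $C\cup E$ follows from that for $C$ once one observes that every $j\in[n-1]\backslash C$ with $j\neq n-1$ satisfies $j+1\in C$ — which one proves by letting $q$ be the largest element of $C\cup\{0\}$ strictly below $j$ and applying the chain condition of $C$ at $q$, forcing $q=j-1$ and $q+2=j+1\in C$. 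This verification is the main obstacle, and it is precisely where the elaborate chain condition in the definition of $\Omega_{n,4}$ earns its keep: it is what keeps the sum inside the family $(\mathbf{f}_{C,k})_{(C,k)\in\Theta_{n}}$. Everything else is routine translation between compositions of $n$ and subsets of $[n-1]$, to be dispatched as briefly as in the preceding propositions.
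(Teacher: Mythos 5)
Your proposal is correct and follows essentially the same approach as the paper: the identical partial order on $\Theta_n=\Omega_n\sqcup\Omega_{n,4}$, reduction to an invertibly triangular expansion via Lemma \ref{l-invtri}, and the same key expansion
\[
\mathbf{m}_{C,k}=\sum_{\substack{C\subseteq B\subseteq[n-1]\\k+1\notin B}}(-1)^{|B\backslash C|}\bigl(F_{n,B}-F_{n,(B\cup\{k+1\})\backslash\{k\}}\bigr)
\]
for $(C,k)\in\Omega_{n,4}$, together with the set identity $\{(B,k):C\subseteq B\subseteq[n-1],\,k+1\notin B\}=\{(B,l)\in\Omega_{n,4}:(B,l)\geq(C,k)\}$. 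Your derivation of the expansion (grouping by $B\cap\{k,k+1\}$, cancelling the $\{k,k+1\}\subseteq B$ terms using $|C|=|C'|$, then pairing via $B\mapsto(B\cup\{k+1\})\backslash\{k\}$) is a more conceptual packaging of the paper's step-by-step manipulation, and your verification of the chain condition for $C\cup E$ via the observation that every $j\in[n-1]\backslash C$ with $j\neq n-1$ has $j+1\in C$ correctly supplies the detail the paper labels "readily checked." These are cosmetic improvements; the argument is the same.
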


\begin{proof}
Take the partial order defined on each $\Omega_{n,i}$ from the proof of Proposition \ref{p-Pkftom} and extend it to $\Omega_{n,4}$. Then, endow $\Theta_{n}$ with the partial order obtained by taking the disjoint union of the posets $\Omega_{n}$ and $\Omega_{n,4}$, so that the elements of $\Omega_{n}$ are incomparable with those in $\Omega_{n,4}$. The proof of Proposition \ref{p-Pkftom} established that $\mathbf{m}_{C,k}$ for each $(C,k)\in\Omega_{n}$ has an invertibly triangular expansion in $(\mathbf{f}_{C,k})_{(C,k)\in\Omega_{n}}$ and thus $(\mathbf{f}_{C,k})_{(C,k)\in\Theta_{n}}$; we shall show that $\mathbf{m}_{C,k}$ for each $(C,k)\in\Omega_{n,4}$ has such an expansion as well.

Fix $(C,k)\in\Omega_{n,4}$. Then
\begin{align}
\mathbf{m}_{C,k} & =M_{n,C}-M_{n,(C\cup\{k+1\})\backslash\{k\}};\label{e-M1}
\end{align}
we shall expand $M_{n,C}$ and $M_{n,(C\cup\{k+1\})\backslash\{k\}}$ separately. From Lemma \ref{l-MtoF} (a), we have 
\begin{align}
M_{n,C} & =\sum_{C\subseteq B\subseteq[n-1]}(-1)^{\left|B\backslash C\right|}F_{n,B}\nonumber \\
 & =\sum_{\substack{C\subseteq B\subseteq[n-1]\\
k+1\notin B
}
}(-1)^{\left|B\backslash C\right|}F_{n,B}+\sum_{\substack{C\subseteq B\subseteq[n-1]\\
k+1\in B
}
}(-1)^{\left|B\backslash C\right|}F_{n,B}\label{e-M2}
\end{align}
and 
\begin{align*}
M_{n,(C\cup\{k+1\})\backslash\{k\}} & =\sum_{(C\cup\{k+1\})\backslash\{k\}\subseteq B\subseteq[n-1]}(-1)^{\left|B\backslash((C\cup\{k+1\})\backslash\{k\})\right|}F_{n,B}\\
 & =\sum_{\substack{C\backslash\{k\}\subseteq B\subseteq[n-1]\\
k+1\in B
}
}(-1)^{\left|B\backslash((C\cup\{k+1\})\backslash\{k\})\right|}F_{n,B}.
\end{align*}
Note that $(C,k)\in\Omega_{n,4}$ implies $k+1\notin C$, so if $k+1\in B$ then $\left|B\backslash((C\cup\{k+1\})\backslash\{k\})\right|=\left|B\backslash(C\backslash\{k\})\right|-1$. Hence, 
\begin{align*}
M_{n,(C\cup\{k+1\})\backslash\{k\}} & =\sum_{\substack{C\backslash\{k\}\subseteq B\subseteq[n-1]\\
k+1\in B
}
}(-1)^{\left|B\backslash(C\backslash\{k\})\right|-1}F_{n,B}\\
 & =-\sum_{\substack{C\backslash\{k\}\subseteq B\subseteq[n-1]\\
k+1\in B
}
}(-1)^{\left|B\backslash(C\backslash\{k\})\right|}F_{n,B}\\
 & =-\sum_{\substack{C\backslash\{k\}\subseteq B\subseteq[n-1]\\
k,k+1\in B
}
}(-1)^{\left|B\backslash(C\backslash\{k\})\right|}F_{n,B}-\sum_{\substack{C\backslash\{k\}\subseteq B\subseteq[n-1]\\
k\notin B,\,k+1\in B
}
}(-1)^{\left|B\backslash(C\backslash\{k\})\right|}F_{n,B}.
\end{align*}
Furthermore, $(C,k)\in\Omega_{n,4}$ implies $k\in C$, so the two conditions $C\backslash\{k\}\subseteq B$ and $k\in B$ are equivalent to the single condition $C\subseteq B$. Also, if $k\in B$ then $\left|B\backslash(C\backslash\{k\})\right|=1+ \left|B\backslash C\right|$, whereas if $k\notin B$ then $\left|B\backslash(C\backslash\{k\})\right|=\left|B\backslash C\right|$.
Therefore, 
\begin{align}
M_{n,(C\cup\{k+1\})\backslash\{k\}} & =\sum_{\substack{C\subseteq B\subseteq[n-1]\\
k+1\in B
}
}(-1)^{\left|B\backslash C\right|}F_{n,B}-\sum_{\substack{C\backslash\{k\}\subseteq B\subseteq[n-1]\\
k\notin B,\,k+1\in B
}
}(-1)^{\left|B\backslash C\right|}F_{n,B}.\label{e-M3}
\end{align}
Substituting (\ref{e-M2}) and (\ref{e-M3}) into (\ref{e-M1}) yields
\begin{align*}
\mathbf{m}_{C,k} & =\sum_{\substack{C\subseteq B\subseteq[n-1]\\
k+1\notin B
}
}(-1)^{\left|B\backslash C\right|}F_{n,B}+\sum_{\substack{C\backslash\{k\}\subseteq B\subseteq[n-1]\\
k\notin B,\,k+1\in B
}
}(-1)^{\left|B\backslash C\right|}F_{n,B}\\
 & =\sum_{\substack{C\backslash\{k\}\subseteq B\subseteq[n-1]\\
k\in B,\,k+1\notin B
}
}(-1)^{\left|B\backslash C\right|}F_{n,B}+\sum_{\substack{C\backslash\{k\}\subseteq B\subseteq[n-1]\\
k\notin B,\,k+1\in B
}
}(-1)^{\left|B\backslash C\right|}F_{n,B}.
\end{align*}
It is straightforward to verify that the map $B\mapsto(B\cup\{k+1\})\backslash\{k\}$ is a bijection from sets $B$ satisfying $C\backslash\{k\}\subseteq B\subseteq[n-1]$, $k\in B$, and $k+1\notin B$ to those satisfying $C\backslash\{k\}\subseteq B\subseteq[n-1]$, $k\notin B$, and $k+1\in B$. Moreover, for the former family of sets, we have $\left|((B\cup\{k+1\})\backslash\{k\})\backslash C\right|=1+\left|(B\backslash\{k\})\backslash C\right|=1+\left|B\backslash C\right|$. Therefore, we have 
\begin{align*}
\mathbf{m}_{C,k} & =\sum_{\substack{C\backslash\{k\}\subseteq B\subseteq[n-1]\\
k\in B,\,k+1\notin B
}
}\Big((-1)^{\left|B\backslash C\right|}F_{n,B}+(-1)^{\left|((B\cup\{k+1\})\backslash\{k\})\backslash C\right|}F_{n,(B\cup\{k+1\})\backslash\{k\}}\Big)\\
 & =\sum_{\substack{C\backslash\{k\}\subseteq B\subseteq[n-1]\\
k\in B,\,k+1\notin B
}
}(-1)^{\left|B\backslash C\right|}(F_{n,B}-F_{n,(B\cup\{k+1\})\backslash\{k\}}).\\
 & =\sum_{\substack{C\subseteq B\subseteq[n-1]\\
k+1\notin B
}
}(-1)^{\left|B\backslash C\right|}(F_{n,B}-F_{n,(B\cup\{k+1\})\backslash\{k\}}).
\end{align*}
Finally, it is readily checked that 
\[
\{\,(B,k):C\subseteq B\subseteq[n-1],k+1\notin B\,\}=\{\,(B,l)\in\Omega_{n,4}:(B,l)\geq(C,k)\,\},
\]
whence
\begin{align*}
\mathbf{m}_{C,k} & =\sum_{\substack{(B,l)\in\Omega_{n,4}\\
(B,l)\geq(C,k)
}
}(-1)^{\left|B\backslash C\right|}(F_{n,B}-F_{n,(B\cup\{l+1\})\backslash\{l\}})\\
 & =\sum_{\substack{(B,l)\in\Omega_{n,4}\\
(B,l)\geq(C,k)
}
}(-1)^{\left|B\backslash C\right|}\mathbf{f}_{B,l}\\
 & =\mathbf{f}_{C,k}+\sum_{\substack{(B,l)\in\Theta_{n}\\
(B,l)>(C,k)
}
}(-1)^{\left|B\backslash C\right|}\mathbf{f}_{B,l}.
\end{align*}

We have shown that $(\mathbf{m}_{C,k})_{(C,k)\in\Theta_{n}}$ expands invertibly triangularly in $(\mathbf{f}_{C,k})_{(C,k)\in\Theta_{n}}$, so the desired conclusion is reached via Lemma \ref{l-invtri}.
\end{proof}

Finally, we complete the proof of Theorem \ref{t-M} (b) using the preceding propositions.

\begin{proof}[Proof of Theorem \ref{t-M} \textup{(}b\textup{)}]
We have 
\begin{align*}
\mathcal{K}_{n}^{\pk} & =\Span\mathcal{F}_{n}^{\pk} & \text{(by Theorem \ref{t-F} (b))}\\
 & =\Span(\mathbf{f}_{C,k})_{(C,k)\in\Theta_{n}} & \text{(by Proposition \ref{p-pkftoF})}\\
 & =\Span(\mathbf{m}_{C,k})_{(C,k)\in\Theta_{n}} & \text{(by Proposition \ref{p-pkftom})}\\
 & =\Span\mathcal{M}_{n}^{\pk} & \text{(by Proposition \ref{p-pkmtoM})}
\end{align*}
for all $n\geq0$, and therefore
\begin{align*}
\mathcal{K}^{\pk} & =\bigoplus_{n=0}^{\infty}\mathcal{K}_{n}^{\pk}=\bigoplus_{n=0}^{\infty}\Span\mathcal{M}_{n}^{\pk}\\
 & =\Span\!\Big(\{\,M_{J}+M_{K}:J\triangleright_{1}K\text{ or }J\triangleright_{2}K\,\}\cup\{\,M_{J}:J\in\mathcal{\tilde{C}}\,\}\cup\{\,M_{J}-M_{K}:J\rightarrow_{3}K\,\}\Big)
\end{align*}
as desired.
\end{proof}

\section{\label{ss-val}Valleys}

Given $\pi\in\mathfrak{P}_{n}$, we say that $i\in\{2,3,\dots,n-1\}$ is a\textit{ valley} of $\pi$ if $\pi_{i-1}>\pi_{i}<\pi_{i+1}$. Then $\Val\pi$ is defined to be the set of valleys of $\pi$ and $\val\pi$ the number of valleys of $\pi$. There is a clear symmetry relating peaks and valleys in permutations, and this section will describe the implications that this symmetry has on the statistics $\Pk$, $\pk$, $\Val$, and $\val$, their shuffle algebras, and their kernels. In particular, we will obtain from Theorem \ref{t-F} an analogue of this theorem for $\mathcal{K}^{\Val}$ and $\mathcal{K}^{\val}$. 

The \textit{complement} $\pi^{c}$ of $\pi\in\mathfrak{P}_{n}$ is the permutation obtained by (simultaneously) replacing the $i$th smallest letter in $\pi$ with the $i$th largest letter in $\pi$ for all $1\leq i\leq n$. For example, if $\pi=472691$ then $\pi^{c}=627419$. Observe that
\[
\Pk\pi=\Val\pi^{c}\quad\text{and}\quad\pk\pi=\val\pi^{c};
\]
this implies that $\Pk$ and $\Val$ are \textit{$c$-equivalent statistics} (see \cite[Section 3.2]{Gessel2018} for the definition), and so are $\pk$ and $\val$. Complementation is a ``shuffle-compatibility-preserving'' involution on permutations, and according to \cite[Theorem 3.5]{Gessel2018}, if two permutation statistics $\st_{1}$ and $\st_{2}$ are $f$-equivalent where $f$ is a shuffle-compatibility-preserving involution and $\st_{1}$ is shuffle-compatible, then $\st_{2}$ is also shuffle-compatible and the map $[\pi]_{\st_{1}}\mapsto[\pi^{f}]_{\st_{2}}$ extends to an isomorphism between their shuffle algebras. Consequently, we have $\mathcal{A}^{\Pk}\cong\mathcal{A}^{\Val}$ and $\mathcal{A}^{\pk}\cong\mathcal{A}^{\val}$.
\begin{figure}
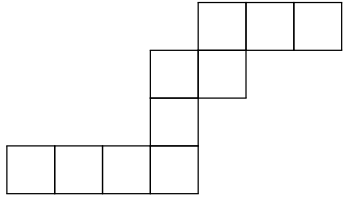

\noindent \begin{centering}
\ydiagram{4+3,3+2,3+1,4}
\par\end{centering}
\caption{\label{f-rshape}The ribbon shape of the composition $(4,1,2,3)$.}
\end{figure}

We can also define complements of compositions in the following way. Given $L\vDash n$, let $L^{c}\coloneqq\Comp([n-1]\backslash\Des L)$. For example, if $L=(4,1,2,3)$ then we have $\Des L=\{4,5,7\}$ and thus $L^{c}=\Comp\{1,2,3,6,8,9\}=(1,1,1,3,2,1,1)$. A simple way to obtain $L^{c}$ from $L$ is to draw its \textit{ribbon shape} (see Figure
\ref{f-rshape} for an example); reading off the columns from left to right (as opposed to the rows from bottom to top) yields $L^{c}$. It is evident that $\Comp\pi=L$ implies $\Comp\pi^{c}=L^{c}$, so we have 
\[
\Pk L=\Val L^{c}\quad\text{and}\quad\pk L=\val L^{c}.
\]
Hence, two compositions are $\Pk$-equivalent (respectively, $\pk$-equivalent) if and only if their complements are $\Val$-equivalent (respectively, $\val$-equivalent). 

Consider the involutory automorphism $\psi$ on $\QSym$ defined by $\psi(F_{L})=F_{L^{c}}$ \cite[Section 3.6]{Luoto2013}. Then the following diagrams commute:
\vspace{-10bp}
\begin{figure}[H]
\noindent \begin{centering}
\begin{center}
\begin{tikzpicture}[scale=0.4,auto,every edge quotes/.style = {sloped}]

\node (3) at (0,24) {$\QSym$};
\node (4) at (12,24) {$\QSym$};

\node (1) at (0,12) {$\mathcal{A}^{\Pk}$};
\node (2) at (12,12) {$\mathcal{A}^{\Val}$};

\node (5) at (0,0) {$\mathcal{A}^{\pk}$};
\node (6) at (12,0) {$\mathcal{A}^{\val}$};

\draw[<->] (1) edge["${[\pi]}_{\Pk} \mapsto {[\pi^c]}_{\Val}$"] (2);
\draw[<->] (3) edge["$\psi$"] (4);
\draw[<->] (5) edge["${[\pi]}_{\pk} \mapsto {[\pi^c]}_{\val}$"] (6);

\draw[->>] (1) edge["${[\pi]}_{\Pk} \mapsto {[\pi]}_{\pk}$"] (5);
\draw[->>] (2) edge["${[\pi]}_{\Val} \mapsto {[\pi]}_{\val}$"] (6);

\draw[->>] (3) edge["$p_{\Pk}$"] (1);
\draw[->>] (4) edge["$p_{\Val}$"] (2);
\draw[->>] (3) edge["$p_{\pk}$", bend right] (5);
\draw[->>] (4) edge["$p_{\val}$", bend left] (6);

\node (7) at (22,20) {$\QSym$};
\node (8) at (30,20) {$\QSym$};

\node (9) at (22,12) {$\mathcal{K}^{\pk}$};
\node (10) at (30,12) {$\mathcal{K}^{\val}$};

\node (11) at (22,4) {$\mathcal{K}^{\Pk}$};
\node (12) at (30,4) {$\mathcal{K}^{\Val}$};

\draw[<->] (7) edge["$\psi$"] (8);
\draw[<->] (9) edge["$\left.\psi\right|_{\mathcal{K}^{\pk}}$"] (10);
\draw[<->] (11) edge["$\left.\psi\right|_{\mathcal{K}^{\Pk}}$"] (12);

\path[right hook->] (9) edge (7);
\path[right hook->] (10) edge (8);
\path[right hook->] (11) edge (9);
\path[right hook->] (12) edge (10);

\end{tikzpicture}
\end{center}\smallskip{}
\par\end{centering}
\caption{\label{f-comdiag}Relationships between the shuffle algebras and kernels of $\protect\Pk$, $\protect\pk$, $\protect\Val$, and $\protect\val$}
\end{figure}
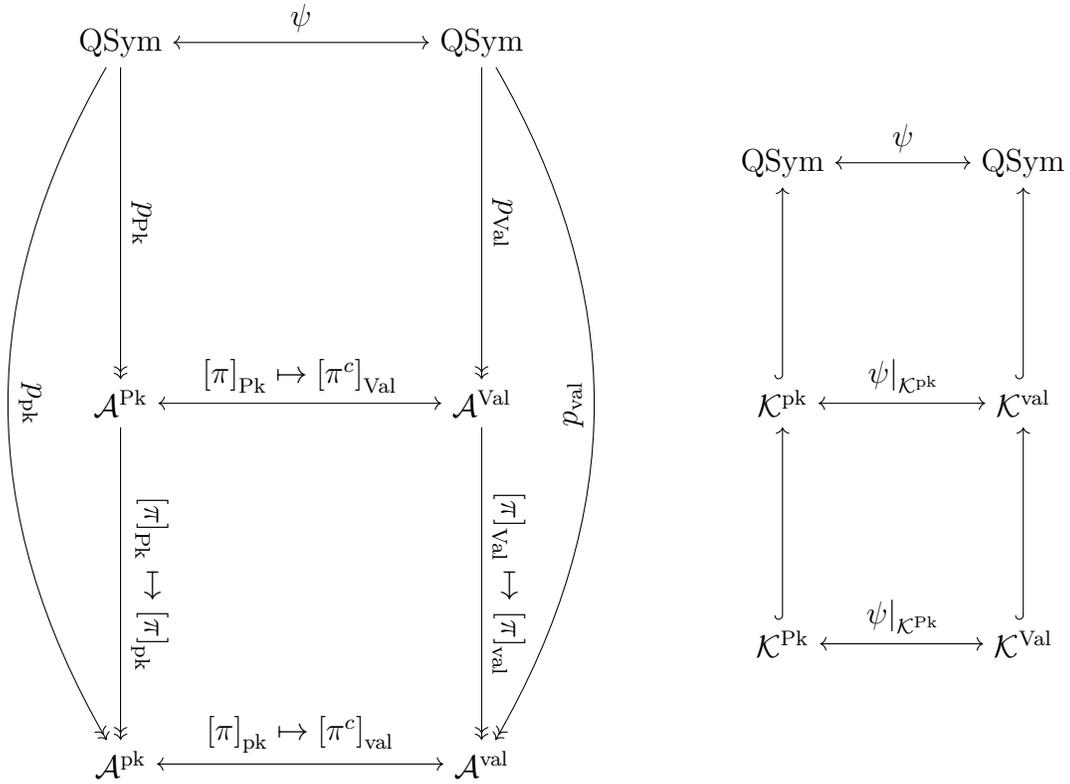

\noindent The involution $\psi$ allows us to obtain characterizations for the ideals $\mathcal{K}^{\Val}$ and $\mathcal{K}^{\val}$ in terms of the fundamental basis from the analogous results for $\mathcal{K}^{\Pk}$ and $\mathcal{K}^{\pk}$ (Theorem~\ref{t-F}). Given compositions $J=(j_{1},j_{2},\dots,j_{m})$ and $K$, let us write: 
\begin{itemize}
\item $J\twoheadrightarrow_{1}K$ if there exists $l\in[m-1]$ for which
$j_{l}\geq2$ and $j_{l+1}=1$, and 
\[
K=(j_{1},\dots,j_{l-1},j_{l}+1,j_{l+2},\dots,j_{m});
\]
\item $J\twoheadrightarrow_{2}K$ if $j_{1}=j_{2}=1$ and
\[
K=(2,j_{3},\dots,j_{m});
\]
\item $J\twoheadrightarrow_{3}K$ if $j_{i}\geq2$ for all $i\in\{2,3,\dots,m\}$, there exists $l\in[m-1]$ for which $j_{l}\geq2$---and $j_{l}>2$ if $l>1$---and
\[
K=(j_{1},\dots,j_{l-1},j_{l}-1,j_{l+1}+1,j_{l+2},\dots,j_{m}).
\]
\end{itemize}

\begin{thm} \label{t-vals}
The ideals $\mathcal{K}^{\Val}$ and $\mathcal{K}^{\val}$ are spanned \textup{(}as $\mathbb{Q}$-vector spaces\textup{)} in the following ways\textup{:}
\begin{enumerate}
\item [\normalfont{(a)}]$\mathcal{K}^{\Val}=\Span\{\,F_{J}-F_{K}:J\twoheadrightarrow_{1}K\text{ or }J\twoheadrightarrow_{2}K\,\}$
\item [\normalfont{(b)}]$\mathcal{K}^{\val}=\Span\{\,F_{J}-F_{K}:J\twoheadrightarrow_{1}K, \ J\twoheadrightarrow_{2}K,\text{ or }J\twoheadrightarrow_{3}K\,\}$
\end{enumerate}
\end{thm}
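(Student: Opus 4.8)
The plan is to transport Theorem~\ref{t-F} across the involutory automorphism $\psi$ of $\QSym$ defined by $\psi(F_L)=F_{L^c}$. Since peaks and valleys are exchanged under complementation of compositions---we have $\Pk L = \Val L^c$ and $\pk L = \val L^c$, so $J\sim_{\Pk}K$ iff $J^c\sim_{\Val}K^c$, and likewise for $\pk$/$\val$---the map $\psi$ restricts to a linear isomorphism $\mathcal{K}^{\Pk}\xrightarrow{\sim}\mathcal{K}^{\Val}$ and $\mathcal{K}^{\pk}\xrightarrow{\sim}\mathcal{K}^{\val}$. (This last point is recorded in the commuting diagrams of Figure~\ref{f-comdiag}, but one can also see it directly: $\psi$ sends the spanning set $\{F_J-F_K : J\sim_{\Pk}K\}$ of $\mathcal{K}^{\Pk}$ to $\{F_{J^c}-F_{K^c}: J\sim_{\Pk}K\} = \{F_{J'}-F_{K'}: J'\sim_{\Val}K'\}$, which spans $\mathcal{K}^{\Val}$, using that $L\mapsto L^c$ is an involution on $\mathcal{C}$.) Applying $\psi$ to the spanning set in Theorem~\ref{t-F}(a) gives
\[
\mathcal{K}^{\Val}=\psi(\mathcal{K}^{\Pk})=\Span\{\,F_{J^c}-F_{K^c}:J\rightarrow_1 K\text{ or }J\rightarrow_2 K\,\},
\]
and similarly for part~(b) with $\rightarrow_3$ adjoined. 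So the entire content of the theorem reduces to the combinatorial identity that $J\rightarrow_i K$ holds if and only if $J^c\twoheadrightarrow_i K^c$, for $i=1,2,3$.

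The core of the proof is therefore three lemmas, one for each $i$, asserting: for all compositions $J,K$, we have $J\rightarrow_i K \iff J^c \twoheadrightarrow_i K^c$. Because complementation is an involution, it suffices to prove one direction of each; I would prove $J\rightarrow_i K \implies J^c\twoheadrightarrow_i K^c$. The cleanest way to verify these is to translate everything to descent sets. Recall $\Des(L^c)=[n-1]\setminus\Des L$. For $\rightarrow_1$: the relation $J\rightarrow_1 K$ with $J=(j_1,\dots,j_m)$, $j_l>2$, amounts to $\Des K = \Des J \cup \{s\}$ where $s = j_1+\cdots+j_{l-1}+1$ is an element with $s-1\notin\Des J\cup\{0\}$ and $s\notin\Des J$ (i.e., inserting a new descent immediately after a non-descent gap of length $\geq 2$ becomes length $1$ on the left, length $\geq 1$ on the right---precisely the condition $j_l>2$ which makes $j_l-1\geq 2$, hence $s+1$ may or may not be a descent but $s-1$ is not). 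Complementing, $\Des(K^c)=\Des(J^c)\setminus\{s\}$, i.e., we delete $s$ from $\Des(J^c)$; since $s-1\notin\Des J$ means $s-1\in\Des(J^c)$, and $s\in\Des(J^c)$, removing $s$ merges the part of $J^c$ ending at position $s$ with the following part, with the merged-from part having had length $\geq 2$ (since there was a descent at $s-1$ too, so it wasn't the initial run) and the absorbed part having length exactly $1$. That is exactly the condition defining $J^c\twoheadrightarrow_1 K^c$: $j'_l\geq 2$, $j'_{l+1}=1$, merged into $j'_l+1$. The cases $\rightarrow_2/\twoheadrightarrow_2$ (the boundary variant, splitting a final $2$ into $1,1$ versus merging an initial $1,1$ into $2$) and $\rightarrow_3/\twoheadrightarrow_3$ (swapping adjacent parts $1,2\mapsto 2,1$ in an all-$\{1,2\}$ composition with final part $1$ versus the mirrored swap with all parts $\geq 2$ except possibly the first) are handled the same way: write the relation as a modification of the descent set, complement, and read off the resulting composition relation. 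The side conditions ($j_m=2$, ``$j_i\leq 2$ for all $i$'', ``$j_l=1,j_{l+1}=2$'', etc.) all have transparent complements under the ribbon-shape description---``all parts $\leq 2$'' becomes ``no two consecutive descents'' which dualizes to ``no two consecutive non-descents'' which is ``all parts $\geq 2$'', and so on.

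The main obstacle is purely bookkeeping: matching the three somewhat baroque side conditions in the definitions of $\twoheadrightarrow_1,\twoheadrightarrow_2,\twoheadrightarrow_3$ against the complements of the conditions in $\rightarrow_1,\rightarrow_2,\rightarrow_3$, and getting the edge cases at the two ends of the composition right (e.g.\ the interaction of $\rightarrow_3$'s requirement $j_m=1$ and ``$l\in[m-2]$'' with the dual requirement in $\twoheadrightarrow_3$ that $j_i\geq 2$ for $i\geq 2$ and $j_l>2$ when $l>1$; also the degenerate $m=1$ cases). Once the three bijective equivalences $\rightarrow_i\ \Leftrightarrow\ (\,\cdot\,)^c\twoheadrightarrow_i(\,\cdot\,)^c$ are in hand, the theorem follows immediately by applying $\psi$ to both spanning sets in Theorem~\ref{t-F}. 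I would present the descent-set translations for $\rightarrow_1$ and $\twoheadrightarrow_1$ in full and remark that the other two pairs are verified analogously, since the verifications, while a little tedious, involve no new ideas.
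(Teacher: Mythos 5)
Your overall strategy—transporting Theorem~\ref{t-F} across the involution $\psi$—is exactly what the paper does, and your observation that $\psi$ restricts to isomorphisms $\mathcal{K}^{\Pk}\xrightarrow{\sim}\mathcal{K}^{\Val}$ and $\mathcal{K}^{\pk}\xrightarrow{\sim}\mathcal{K}^{\val}$ is correct. However, the central combinatorial claim on which you pin the whole argument is false: it is \emph{not} true that $J\rightarrow_i K \iff J^c\twoheadrightarrow_i K^c$ for each individual $i$. Take $J=(3)$ and $K=(1,2)$, so $J\rightarrow_1 K$ (with $l=1$). Complementing, $J^c=(1,1,1)$ and $K^c=(2,1)$. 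But $(1,1,1)\not\twoheadrightarrow_1 (2,1)$ since $(1,1,1)$ has no part $\geq 2$; rather $(1,1,1)\twoheadrightarrow_2 (2,1)$. Similarly $J=(2,2)\rightarrow_2 K=(2,1,1)$ yields $J^c=(1,2,1)\twoheadrightarrow_1 K^c=(1,3)$, not $\twoheadrightarrow_2$. The correct correspondence (the paper's Lemma~\ref{l-pkcomp}) mixes the indices depending on boundary cases: $\rightarrow_1$ with $l=1$ and $\rightarrow_2$ with $m=1$ both land in $\twoheadrightarrow_2$, while $\rightarrow_1$ with $l>1$ and $\rightarrow_2$ with $m>1$ land in $\twoheadrightarrow_1$. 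Your descent-set translation of $\rightarrow_1$ contains the root cause: you assert $s-1\notin\Des J\cup\{0\}$, which fails precisely when $l=1$ (then $s=1$, so $s-1=0\in\Des J\cup\{0\}$), even though $\rightarrow_1$ explicitly permits $l=1$.

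There is a second, more minor issue with the logic ``it suffices to prove one direction by involution.'' Proving $J\rightarrow_i K\implies J^c\twoheadrightarrow_i K^c$ and substituting complements only gives $J^c\rightarrow_i K^c\implies J\twoheadrightarrow_i K$; it does not yield the converse $J^c\twoheadrightarrow_i K^c\implies J\rightarrow_i K$ unless the relations $\rightarrow_i$ and $\twoheadrightarrow_i$ are themselves stable under complementation (which they are not). The paper sidesteps both difficulties: it proves only the single implication ``$J\rightarrow_{\Pk}K$ (or $J\rightarrow_{\pk}K$) implies $J^c\twoheadrightarrow_1 K^c$, $J^c\twoheadrightarrow_2 K^c$ (or $\twoheadrightarrow_3$)'' as Lemma~\ref{l-pkcomp}, which establishes the forward spanning inclusion via $\psi$, and then proves the reverse inclusion cheaply by checking directly (via Lemma~\ref{l-ValvalL}) that each of $\twoheadrightarrow_1,\twoheadrightarrow_2,\twoheadrightarrow_3$ preserves $\Val$- and $\val$-equivalence, so each $F_J - F_K$ in the proposed spanning set already lies in $\mathcal{K}^{\Val}$ or $\mathcal{K}^{\val}$ by definition. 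You should restate your key lemma as a one-directional, non-index-preserving implication and adopt this asymmetric treatment of the two inclusions.
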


To prove Theorem \ref{t-vals}, we will need a couple lemmas.
\begin{lem}
\label{l-pkcomp}Let $J$ and $K$ be compositions.
\begin{enumerate}
\item [\normalfont{(a)}]If $J\rightarrow_{\Pk}K$, then $J^{c}\twoheadrightarrow_{1}K^{c}$
or $J^{c}\twoheadrightarrow_{2}K^{c}$.
\item [\normalfont{(b)}]If $J\rightarrow_{\pk}K$, then $J^{c}\twoheadrightarrow_{1}K^{c}$,
$J^{c}\twoheadrightarrow_{2}K^{c}$, or $J^{c}\twoheadrightarrow_{3}K^{c}$.
\end{enumerate}
\end{lem}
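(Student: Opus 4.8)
The plan is to reduce the whole statement to bookkeeping at the level of descent sets, using the identity $\Des(L^{c})=[n-1]\setminus\Des L$ together with the observation that each of $\rightarrow_{1},\rightarrow_{2},\rightarrow_{3}$ and $\twoheadrightarrow_{1},\twoheadrightarrow_{2},\twoheadrightarrow_{3}$ is a simple local modification of a descent set. Writing $s_{i}=j_{1}+\cdots+j_{i}$ for the partial sums of $J$: the relation $J\rightarrow_{1}K$ with splitting position $l$ amounts to $\Des K=\Des J\cup\{s_{l-1}+1\}$; $J\rightarrow_{2}K$ amounts to $\Des K=\Des J\cup\{n-1\}$; and $J\rightarrow_{3}K$ amounts to $\Des K=(\Des J\setminus\{s_{l-1}+1\})\cup\{s_{l-1}+2\}$. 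Likewise $J\twoheadrightarrow_{1}K$ and $J\twoheadrightarrow_{2}K$ delete a single element from $\Des J$ (merging two adjacent parts), while $J\twoheadrightarrow_{3}K$ replaces some element $a+1$ of $\Des J$ by $a$ (shifting a unit from a part to its successor). Complementing turns ``adjoin an element'' into ``delete an element'' and ``move a descent right'' into ``move a descent left'', so the content of the lemma is precisely the matching-up of the side conditions in the definitions of the $\twoheadrightarrow_{i}$.

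For $J\rightarrow_{1}K$, since $j_{l}\geq2$ the position $s_{l-1}+1$ lies strictly between $s_{l-1}$ and $s_{l}$, hence $s_{l-1}+1\notin\Des J$, so $s_{l-1}+1\in\Des(J^{c})$ and $K^{c}$ is obtained from $J^{c}$ by deleting a descent. Using $j_{l}>2$ (so $s_{l-1}+2\notin\Des J$, forcing the part of $J^{c}$ just after position $s_{l-1}+1$ to have size $1$) and, when $l\geq2$, $s_{l-1}\in\Des J$ (so $s_{l-1}\notin\Des(J^{c})$, forcing the part of $J^{c}$ ending at $s_{l-1}+1$ to have size $\geq2$), one gets $J^{c}\twoheadrightarrow_{1}K^{c}$; when $l=1$ one checks instead that the first two parts of $J^{c}$ both equal $1$, so $J^{c}\twoheadrightarrow_{2}K^{c}$. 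For $J\rightarrow_{2}K$, since $j_{m}=2$ we have $n-1\notin\Des J$ and $n-2\in\Des J$, so the last part of $J^{c}$ has size $1$ and the preceding one has size $\geq2$, and deleting the last descent of $J^{c}$ yields $J^{c}\twoheadrightarrow_{1}K^{c}$. This already proves part (a).

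For part (b) it remains to treat $J\rightarrow_{3}K$. Here $\Des(K^{c})=(\Des(J^{c})\cup\{s_{l-1}+1\})\setminus\{s_{l-1}+2\}$, so $K^{c}$ comes from $J^{c}$ by moving one descent one step to the left, and I must check this is an instance of $\twoheadrightarrow_{3}$. Two things need verification. First, every part of $J^{c}$ except possibly the first has size $\geq2$: this follows from the ribbon/sign-word description of complementation, since $J$ having all parts $\leq2$ means the sign word of $J^{c}$ has no two consecutive descent-signs, and $j_{m}=1$ means it ends in an ascent-sign. Second, the part of $J^{c}$ being decremented (the one ending at $s_{l-1}+2$) has size $\geq2$ in general and size $\geq3$ when it is not the first part of $J^{c}$, matching the proviso ``$j_{l}>2$ if $l>1$'': for the first claim, $s_{l-1}+1\notin\Des(J^{c})$ forces the preceding descent of $J^{c}$ to be $\leq s_{l-1}$; for the second, when $l\geq2$ the position $s_{l-1}$ also lies in $\Des J$, hence not in $\Des(J^{c})$, pushing that preceding descent down to $\leq s_{l-1}-1$ and forcing size $\geq3$, while when $l=1$ the modified part is necessarily the first part of $J^{c}$ so the proviso is vacuous.

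I expect the main obstacle to be purely organizational: the $\rightarrow_{3}$ case requires translating ``$J$ has all parts $\leq2$ and ends in a $1$'' into the structural statement about $J^{c}$ (all parts $\geq2$ except possibly the first), and then identifying precisely which part of $J^{c}$ is modified and checking its $\twoheadrightarrow_{3}$ side conditions, with care at the boundary ($l=1$, and the first and last parts of $J^{c}$). The sign-word picture makes the structural statement transparent, and tracking partial sums of $J$ handles the rest, so although several small cases must be checked, no new idea is needed beyond the descent-set dictionary set up in the first step.
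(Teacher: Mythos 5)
Your approach is essentially the same as the paper's: translate everything into descent-set (equivalently, ribbon-diagram) bookkeeping and then check cases, with the key structural observation that "$J$ has all parts $\leq 2$ and ends in a $1$" translates to "$J^c$ has all parts $\geq 2$ except possibly the first." Your descent-set dictionary for $\rightarrow_i$ and $\twoheadrightarrow_i$ is correct, and your treatment of the $\rightarrow_1$ and $\rightarrow_3$ cases is sound, including the necessary care when $l=1$.

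There is, however, a genuine (if small) gap in the $\rightarrow_2$ case. You assert that $j_m = 2$ implies $n-2 \in \Des J$ and then conclude $J^c \twoheadrightarrow_1 K^c$ unconditionally. This fails when $m = 1$, i.e.\ $J = (2)$: there $n = 2$, $\Des J = \emptyset$, and "$n-2 \in \Des J$" reads "$0 \in \emptyset$," which is false. Concretely, $J = (2)$ gives $J^c = (1,1)$ and $K^c = (2)$, and $(1,1) \twoheadrightarrow_1 (2)$ fails because $\twoheadrightarrow_1$ requires the merged left-hand part to have size $\geq 2$; instead one has $(1,1) \twoheadrightarrow_2 (2)$. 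The paper handles exactly this by splitting into the subcases $\alpha \geq 2$ (giving $\twoheadrightarrow_1$) and $\alpha = 1$ (forcing $m = 1$ and giving $\twoheadrightarrow_2$), where $\alpha$ is the penultimate part of $J^c$. Your argument needs the analogous split: if $m \geq 2$, then $n-2 = s_{m-1} \in \Des J$ and $J^c \twoheadrightarrow_1 K^c$ as you say; if $m = 1$, then $J^c = (1,1)$ and $J^c \twoheadrightarrow_2 K^c$. With that one correction, the proof is complete.
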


\begin{proof}
Let $J=(j_{1},j_{2},\dots,j_{m})$ and suppose that $J\rightarrow_{\Pk}K$,
so either $J\rightarrow_{1}K$ or $J\rightarrow_{2}K$. 
\begin{enumerate}
\item [\normalfont{(1)}]Suppose that $J\rightarrow_{1}K$. Then there exists
$l\in[m]$ for which $j_{l}>2$ and 
\[
K=(j_{1},\dots,j_{l-1},1,j_{l}-1,j_{l+1},\dots,j_{m}).
\]
Upon drawing the ribbon diagrams of $J$ and $K$ and reading the columns from left to right, we see that $J^{c}$ and $K^{c}$ are the same except that a segment $(\alpha,1^{j_{l}-2})$ of $J^{c}$ is replaced with $(\alpha+1,1^{j_{l}-3})$ in $K^{c}$:

\begin{figure}[H]
\begin{center}
\begin{tikzpicture}[]

\draw (0,0) node {
\begin{ytableau} 
\none & \none & \none & \vdots & \none & \none & \none & \none & \none & \none & \none & \none & \vdots \\ 
\phantom{.} & \phantom{.} & \cdots & \phantom{.} & \none & \none & \none & \none & \none & \none & \phantom{.} & \cdots & \phantom{.} \\ 
\vdots & \none & \none & \none & \none & \none & \none & \none & \none & \none & \phantom{.} \\
\none & \none & \none & \none & \none & \none & \none & \none & \none & \none & \vdots
\end{ytableau}
};

\draw [thick, decorate,decoration={brace,amplitude=5pt},yshift=2pt] (-3.45,0.72) -- (-2.28,0.72) node [black,midway,yshift=12pt, scale=0.75] {$j_l -2$};

\draw [thick, decorate,decoration={brace,amplitude=5pt},yshift=2pt] (-4.3,-0.69) -- (-4.3,0.55) node [black,midway,xshift=-12pt, scale=0.75] {$\alpha$};

\draw [thick, decorate,decoration={brace,amplitude=5pt},yshift=2pt] (2.91,0.72) -- (3.42,0.72) node [black,midway,yshift=12pt, scale=0.75] {$j_l -3 \quad$};

\draw [thick, decorate,decoration={brace,amplitude=5pt},yshift=2pt] (2.05,-1.32) -- (2.05,0.55) node [black,midway,xshift=-20pt, scale=0.75] {$\alpha+1$};

\draw (0,0.25) node[black, scale=1.5] {$\mapsto$};

\end{tikzpicture}
\end{center}\vspace{-15bp}
\caption{Ribbon diagram upon replacing a part $j_{l}>2$ with $(1,j_{l}-1)$.}
\end{figure}
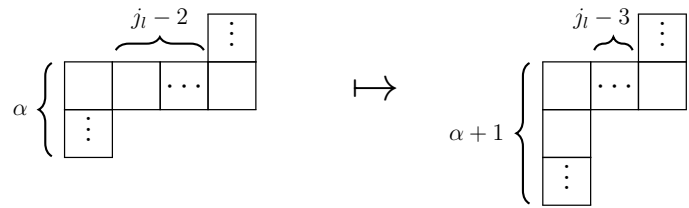

If $l=1$, then $\alpha=1$ and we have $J^{c}\twoheadrightarrow_{2}K^{c}$. Otherwise, if $l>1$, then $\alpha\geq2$ and we have $J^{c}\twoheadrightarrow_{1}K^{c}$.
\item [\normalfont{(2)}]Suppose that $J\rightarrow_{2}K$, so that $j_{m}=2$
and 
\[
K=(j_{1},\dots,j_{m-1},1,1).
\]
Then $J^{c}$ and $K^{c}$ are the same except that $J^{c}$ ends with $(\alpha,1)$ and $K^{c}$ ends with $\alpha+1$:

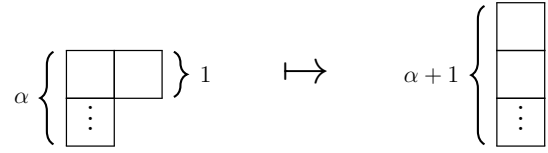
\begin{figure}[H]
\begin{center}
\begin{tikzpicture}[]

\draw (0,0) node {
\begin{ytableau} 
\none  & \none  & \none & \none & \none & \none & \none & \none & \none & \phantom{.} \\
\phantom{.} & \phantom{.} & \none & \none & \none & \none & \none & \none & \none & \phantom{.} \\ 
\vdots & \none & \none & \none & \none & \none & \none & \none & \none & \vdots
\end{ytableau}
};

\draw [thick, decorate,decoration={brace,amplitude=5pt},yshift=2pt] (-3.35,-1) -- (-3.35,0.23) node [black,midway,xshift=-12pt, scale=0.75] {$\alpha$};

\draw [thick, decorate,decoration={brace,amplitude=5pt, mirror},yshift=2pt] (-1.75,-0.37) -- (-1.75,0.23) node [black,midway,xshift=12pt, scale=0.75] {$1$};

\draw [thick, decorate,decoration={brace,amplitude=5pt},yshift=2pt] (2.38,-1) -- (2.38,0.84) node [black,midway,xshift=-20pt, scale=0.75] {$\alpha+1$};

\draw (0,0) node[black, scale=1.5] {$\mapsto$};

\end{tikzpicture}
\end{center}\vspace{-15bp}
\caption{Ribbon diagram upon replacing the final part $j_{m}=2$ with $(1,1)$.}
\end{figure}

If $\alpha\geq2$, then $J^{c}\twoheadrightarrow_{1}K^{c}$. Otherwise, $\alpha=1$ forces $m=1$ and thus we have $J^{c}\twoheadrightarrow_{2}K^{c}$.
\end{enumerate}
Part (a) follows from the two cases above.

In light of (a), to prove (b) it suffices to show that $J\rightarrow_{3}K$ implies $J^{c}\twoheadrightarrow_{3}K^{c}$. To that end, let us suppose that $J\rightarrow_{3}K$, so that $j_{i}\leq2$ for all $i\in[m]$, $j_{m}=1$, $j_{l}=1$ and $j_{l+1}=2$ for some $l\in[m-2]$, and 
\[
K=(j_{1},\dots,j_{l-1},j_{l+1},j_{l},j_{l+2},\dots,j_{m}).
\]
Then $J^{c}$ and $K^{c}$ are the same except that a segment $(\alpha,\beta)$ of $J^{c}$ is replaced with $(\alpha-1,\beta+1)$ in $K^{c}$, where $\alpha\geq2$ if $\alpha$ is the first part of $J^{c}$ and $\alpha\geq3$ otherwise:
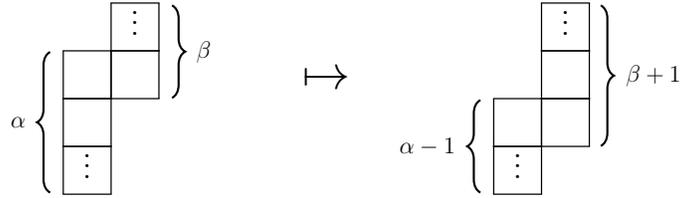
\begin{figure}[H]
\begin{center}
\begin{tikzpicture}[]

\draw (0,0) node {
\begin{ytableau} 
\none & \vdots & \none & \none & \none & \none & \none & \none & \none & \none & \vdots \\
\phantom{.} & \phantom{.} & \none & \none & \none & \none & \none & \none & \none & \none & \phantom{.} \\
\phantom{.} & \none & \none & \none & \none & \none & \none & \none & \none & \phantom{.} & \phantom{.} \\
\vdots & \none & \none & \none & \none & \none & \none & \none & \none & \vdots
\end{ytableau}
};

\draw [thick, decorate,decoration={brace,amplitude=5pt},yshift=2pt] (-3.67,-1.32) -- (-3.67,0.55) node [black,midway,xshift=-12pt, scale=0.75] {$\alpha$};

\draw [thick, decorate,decoration={brace,amplitude=5pt, mirror},yshift=2pt] (-2.05,-0.06) -- (-2.05,1.17) node [black,midway,xshift=12pt, scale=0.75] {$\beta$};

\draw [thick, decorate,decoration={brace,amplitude=5pt},yshift=2pt] (2.05,-1.32) -- (2.05,-0.09) node [black,midway,xshift=-20pt, scale=0.75] {$\alpha-1$};

\draw [thick, decorate,decoration={brace,amplitude=5pt, mirror},yshift=2pt] (3.65,-0.7) -- (3.65,1.17) node [black,midway,xshift=20pt, scale=0.75] {$\beta+1$};

\draw (0,0.25) node[black, scale=1.5] {$\mapsto$};

\end{tikzpicture}
\end{center}\vspace{-15bp}
\caption{Ribbon diagram upon swapping the positions of parts $(1,2)$.}
\end{figure}
\noindent Furthermore, requiring $J$ to have every part at most 2 and for $J$ to end with a 1 implies that every part of $J^{c}$, except possibly the first one, is at least 2. Thus $J^{c}\twoheadrightarrow_{3}K^{c}$, and the proof is complete.
\end{proof}

Just as the peaks of a permutation occur precisely at the end of its non-final increasing runs of length at least 2, the valleys occur precisely at the beginning of its non-initial increasing runs of length at least 2. Then the following lemma readily follows; compare with Lemma \ref{l-PkpkL} for peaks.
\begin{lem}
\label{l-ValvalL}Let $L=(j_{1},j_{2},\dots,j_{m})$ be a composition. Then:
\begin{enumerate}
\item [\normalfont{(a)}]$\Val L=\left\{ \,1+\sum_{i=1}^{k-1}j_{i}:j_{k}\geq2\text{ and }k\in\{2,3,\dots,m\}\,\right\} $
\item [\normalfont{(b)}]$\val L=\left|\{\,k\in\{2,3,\dots,m\}:j_{k}\geq2 \,\}\right|$
\end{enumerate}
\end{lem}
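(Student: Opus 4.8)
The plan is to prove the formula directly from the description of valleys in terms of increasing runs stated just before the lemma, mirroring the argument behind Lemma \ref{l-PkpkL}. First I would make precise the claim that the valleys of a permutation $\pi \in \mathfrak{P}_n$ are exactly the starting positions of its non-initial increasing runs of length at least $2$. Indeed, $i \in \{2,3,\dots,n-1\}$ is a valley of $\pi$ precisely when $\pi_{i-1} > \pi_i$ and $\pi_i < \pi_{i+1}$: the first condition says $i-1 \in \Des \pi$, so a new increasing run of $\pi$ begins at position $i$, and since $i-1 \geq 1$ this run is not the first one; the second condition says $i \notin \Des \pi$, so position $i$ is not the final entry of its run, forcing that run to have length at least $2$. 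Conversely, if the $k$-th increasing run of $\pi$ with $k \geq 2$ has length at least $2$, then its first position $i$ satisfies $\pi_{i-1} > \pi_i < \pi_{i+1}$, and $i$ lies in $\{2,\dots,n-1\}$ since $i \geq 2$ and the run contains a further entry after position $i$; hence $i$ is a valley.

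Next I would translate this into the language of compositions. If $\Comp \pi = L = (j_1, j_2, \dots, j_m)$, then by Equation (\ref{e-Comp}) the $k$-th increasing run of $\pi$ occupies positions $\bigl(\sum_{i=1}^{k-1} j_i\bigr) + 1, \dots, \sum_{i=1}^{k} j_i$, so this run has length $j_k$ and starts at position $1 + \sum_{i=1}^{k-1} j_i$. Combining this with the previous paragraph, the position $1 + \sum_{i=1}^{k-1} j_i$ is a valley of $\pi$ if and only if $k \in \{2,3,\dots,m\}$ and $j_k \geq 2$. Since $\Val L$ is by definition $\Val \pi$ for any such $\pi$, this is exactly the description in part (a). Part (b) then follows by taking cardinalities, using that the map $k \mapsto 1 + \sum_{i=1}^{k-1} j_i$ is injective on $\{1, \dots, m\}$ because all parts $j_i$ are positive, so the partial sums are strictly increasing; hence no distinct values of $k$ collapse to the same valley.

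I do not anticipate any substantive obstacle. The only points requiring minor care are the boundary cases — confirming that the run containing a valley cannot be the initial run (handled by $i \geq 2$) and that a valley position is genuinely interior to its run (so the run length is at least $2$) — together with the injectivity observation needed to pass from the set in (a) to the count in (b). One could alternatively deduce the lemma from Lemma \ref{l-PkpkL} via the identity $\Val L = \Pk(L^c)$ arising from complementation (as developed earlier in Section \ref{ss-val}), but that route requires relating the parts of $L$ to those of $L^c$ through ribbon shapes, which is less transparent than the direct argument sketched above.
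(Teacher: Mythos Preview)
Your proposal is correct and follows exactly the approach the paper indicates: the paper simply states that the lemma ``readily follows'' from the observation that valleys occur at the beginning of non-initial increasing runs of length at least $2$, and your argument is a careful unpacking of precisely that. The only difference is that you supply the details (including the injectivity remark for part (b)) that the paper leaves to the reader.
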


Now we are ready to prove Theorem \ref{t-vals}.
\begin{proof}[Proof of Theorem \ref{t-vals}]
In light of Theorem \ref{t-F} and the involution $\psi$, Lemma \ref{l-pkcomp} implies the forward inclusions 
\begin{align*}
\mathcal{K}^{\Val} & \subseteq\Span\{\,F_{J}-F_{K}:J\twoheadrightarrow_{1}K\text{ or }J\twoheadrightarrow_{2}K\,\}\quad\text{and}\\
\mathcal{K}^{\val} & \subseteq\Span\{\,F_{J}-F_{K}:J\twoheadrightarrow_{1}K,\ J\twoheadrightarrow_{2}K,\text{ or }J\twoheadrightarrow_{3}K\,\}.
\end{align*}
For the reverse inclusions, it suffices to show that $J\twoheadrightarrow_{1}K$ and $J\twoheadrightarrow_{2}K$ each imply $J\sim_{\Val}K$, and that $J\twoheadrightarrow_{1}K$, $J\twoheadrightarrow_{2}K$, and
$J\twoheadrightarrow_{3}K$ each imply $J\sim_{\val}K$. However, it is readily seen from Lemma \ref{l-ValvalL} and the definitions of $\twoheadrightarrow_{1}$, $\twoheadrightarrow_{2}$, and $\twoheadrightarrow_{3}$ that these indeed hold.
\end{proof}

Ehrenborg's formula
\[
\psi(M_{L})=(-1)^{n-\ell(L)}\sum_{L\leq K}M_{K}
\]
\cite[Section 5]{Ehrenborg1996}, where $\ell(L)$ is the number of parts of $L$, gives us the image of the involution $\psi$ on the monomial basis. One can use this formula along with Theorem \ref{t-M} to obtain spanning sets for $\mathcal{K}^{\Val}$ and $\mathcal{K}^{\val}$ in terms of monomial quasisymmetric functions, but more work is needed to show that $\Val$ and $\val$ are $M$-binomial. We will not do this here.

Finally, it is worth mentioning that the kernel $\mathcal{K}^{\epk}$ of the exterior peak number is identical to $\mathcal{K}^{\val}$. This is because $\epk\pi=\val\pi+1$ for all (nonempty) permutations $\pi$ \cite[Lemma 2.1 (e)]{Gessel2018}, and therefore $J\sim_{\epk}K$ if and only if $J\sim_{\val}K$. 

\section{\label{ss-future}Future directions of research}

We conclude this paper by surveying some directions for future research.

Given $\pi\in\mathfrak{P}_{n}$, we say that:
\begin{itemize}
\item $i\in[n-1]$ is a\textit{ left peak} of $\pi$ if $i$ is a peak of $\pi$, or if $i=1$ and $\pi_{1}>\pi_{2}$. Then $\Lpk\pi$ is defined to be the set of left peaks of $\pi$ and $\lpk\pi$ the number of left peaks of $\pi$. 
\item $i\in\{2,3,\dots,n\}$ is a\textit{ right peak} of $\pi$ if $i$ is a peak of $\pi$, or if $i=n$ and $\pi_{n-1}<\pi_{n}$. Then $\Rpk\pi$ is defined to be the set of right peaks of $\pi$ and $\rpk\pi$ the number of right peaks of $\pi$.
\end{itemize}
All four of these statistics are known to be shuffle-compatible \cite{Gessel2018}, and they were conjectured by Grinberg (along with $\Pk$, $\pk$, $\Val$, and $\val$) to be $M$-binomial. While we have resolved Grinberg's conjecture for the $\Pk$ and $\pk$ statistics, it remains open for these six other statistics, so we repeat this conjecture for these remaining statistics here.
\begin{conjecture}[{Grinberg \cite[Question 107]{Grinberg2018}}]
The statistics $\Lpk$, $\lpk$, $\Rpk$, $\rpk$, $\Val$, and $\val$ are $M$-binomial.
\end{conjecture}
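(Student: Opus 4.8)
The plan is to split the six statistics into two groups. The statistics $\Val$ and $\val$ can be handled almost for free from Theorem~\ref{t-M}. Let $r\colon\QSym\to\QSym$ be the linear map defined on the fundamental basis by $r(F_L)=F_{L^{r}}$, where $L^{r}$ is the reversal of the composition $L$. Since reversal of compositions is an order automorphism of the refinement poset on the compositions of each $n$ (it sends $\Des L$ to $\{\,n-d:d\in\Des L\,\}$), the identity $F_L=\sum_{K\le L}M_K$ together with Möbius inversion shows that $r$ also permutes the monomial basis, $r(M_L)=M_{L^{r}}$. Using Lemma~\ref{l-PkpkL} one checks that $\Val(L^{r})=\{\,|L|+1-p:p\in\Pk L\,\}$, and likewise that $\val(L^{r})=\pk L$; hence $r$ carries $\Pk$-equivalence classes of compositions onto $\Val$-equivalence classes and $\pk$-equivalence classes onto $\val$-equivalence classes, so that $r(\mathcal{K}^{\Pk})=\mathcal{K}^{\Val}$ and $r(\mathcal{K}^{\pk})=\mathcal{K}^{\val}$. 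Applying $r$ to the spanning sets of Theorem~\ref{t-M} then yields spanning sets for $\mathcal{K}^{\Val}$ and $\mathcal{K}^{\val}$ whose elements all have the form $M_{J^{r}}+M_{K^{r}}$ or $M_{J^{r}}$, which proves that $\Val$ and $\val$ are $M$-binomial. (The involution $\psi$ of Section~\ref{ss-val} accomplishes the same thing on the fundamental side, but is ill-suited here because it does not permute the monomial basis.)

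For $\Lpk$, $\lpk$, $\Rpk$, and $\rpk$ the plan is to run, for each statistic, the two-step program carried out for $\Pk$ and $\pk$ in Sections~\ref{s-fun} and~\ref{s-mon}. The first step is to produce spanning sets in terms of the fundamental basis. One would write down an explicit list of ``moves'' on compositions---analogues of $\rightarrow_{1}$, $\rightarrow_{2}$, $\rightarrow_{3}$---and then apply Theorem~\ref{t-Fsst}(a) by showing that the connected components of the associated digraph $G_{S}$ are exactly the equivalence classes of the statistic in question. This is guided by the observation that, for instance, $\Lpk L=\Pk L\cup(\{1\}$ if the first part of $L$ equals $1)$, so $\Lpk$-equivalence is just $\Pk$-equivalence refined by the single bit ``is the first part $1$?''; the $\Pk$-moves should essentially carry over, with extra moves needed to adjust behavior at the first part (and, for $\Rpk$ and $\rpk$, at the last part, together with the peak-number swap $\rightarrow_{3}$). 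Connectivity of $G_{S}$ would be proved by the same normal-form technique used for Theorem~\ref{t-F}: reduce each composition in a class to a canonical representative by the moves, and check that two classes with the same statistic value have the same representative.

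The second and decisive step is the change of basis from the fundamental to the monomial quasisymmetric functions. Mirroring Section~\ref{s-mon}, one would introduce index families $\Omega_{n,i}\subseteq 2^{[n-1]}\times[n-1]$ recording the fundamental-basis moves, define auxiliary families $(\mathbf{f}_{C,k})$ and $(\mathbf{m}_{C,k})$ attached to them, and show via Lemma~\ref{l-MtoF} and Lemma~\ref{l-invtri} that $(\mathbf{m}_{C,k})$ expands invertibly triangularly in $(\mathbf{f}_{C,k})$ for a suitable partial order on the index set; this identifies the two spans and exhibits a monomial-binomial spanning set. I expect this step to be the main obstacle: the endpoint conditions that separate $\Lpk$ and $\Rpk$ from $\Pk$ should force genuinely new move types whose expansion into fundamental functions is not covered by parts (a)--(c) of Lemma~\ref{l-MtoF}, so one would first need to prove additional ``binomial-to-fundamental'' identities in the style of that lemma and then fit everything into a single triangular order on the enlarged index set. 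Trying to shortcut the $\Rpk$ and $\rpk$ cases by deducing them from $\Lpk$ and $\lpk$ via reversal of permutations does not help, since the corresponding automorphism of $\QSym$ acts on the monomial basis through Ehrenborg's formula rather than by a permutation, which reintroduces exactly the same difficulty.
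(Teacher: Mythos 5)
The statement is a conjecture, and the paper explicitly leaves it open (see the end of Section~\ref{ss-val} and Section~\ref{ss-future}); there is no proof in the paper to compare against. So the question is whether your attempt succeeds on its own.

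Your argument for $\Val$ and $\val$ appears to be correct and, if so, genuinely closes two of the six cases the paper leaves open. The key improvement over Section~\ref{ss-val} is the choice of involution: the paper uses $\psi$, which corresponds to complementation of permutations and does \emph{not} permute the monomial basis (hence Ehrenborg's formula enters and the authors remark that ``more work is needed''), whereas your $r$ sends $F_L\mapsto F_{(j_m,\dots,j_1)}$ and simultaneously $M_L\mapsto M_{(j_m,\dots,j_1)}$, which I verified via the $M$-to-$F$ change of basis. Three small cautions. First, your $L^{r}$ clashes with the paper's: the paper's $L^{r}$ has $\Des L^{r}=[n-1]\setminus(n-\Des L)$ (permutation reversal), while your $L^{r}=(j_m,\dots,j_1)$ has $\Des L^{r}=n-\Des L$ (the \emph{reverse-complement} of permutations); you should rename it to avoid confusion. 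Second, checking $\Val(L^{r})=\{\,n+1-p:p\in\Pk L\,\}$ uses Lemma~\ref{l-ValvalL} as well as Lemma~\ref{l-PkpkL}. Third, $\Pk L$ and $\Val(L^{r})$ are not \emph{equal}---they differ by the reflection $p\mapsto n+1-p$---so you should say explicitly that, because $\st$-equivalence also fixes $n$, this reflection still gives $J\sim_{\Pk}K\iff J^{r}\sim_{\Val}K^{r}$, which is all you need.

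For $\Lpk$, $\lpk$, $\Rpk$, $\rpk$ what you have is a roadmap, not a proof. Your diagnosis is sound: the permutation-reversal involution relates $\Lpk$ to $\Rpk$ but acts on the $M$-basis by Ehrenborg's formula, while $r$ permutes the $M$-basis but does not relate $\Lpk$ to $\Rpk$ or to $\Pk$, so no cheap reduction exists. But you do not exhibit the analogues of $\rightarrow_1,\rightarrow_2,\rightarrow_3$, verify connectivity of the associated $G_S$ via Theorem~\ref{t-Fsst}, or supply the additional monomial-to-fundamental identities (beyond Lemma~\ref{l-MtoF}) needed to run the triangularity argument of Section~\ref{s-mon}. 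Until those lemmas are actually proved, the conjecture remains open for these four statistics; your proposal establishes the $\Val$ and $\val$ cases and no others.
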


Just as peaks and valleys are related by complementation, left peaks and right peaks are related by reversal. Given $\pi=\pi_{1}\pi_{2}\cdots\pi_{n}\in\mathfrak{P}_{n}$, define its \textit{reverse} $\pi^{r}$ by $\pi^{r}\coloneqq\pi_{n}\cdots\pi_{2}\pi_{1}$. Then 
\[
\Lpk\pi=(n+1)-\Rpk\pi^{r}\coloneqq \{\,n+1-i:i\in\Rpk\pi^{r}\,\}\qquad\text{and}\qquad\lpk\pi=\rpk\pi^{r},
\]
so these pairs of statistics are \textit{$r$-equivalent} (see \cite[Section 3.2]{Gessel2018} for the definition). Reversal is also shuffle-compatibility-preserving, so we have the $\mathbb{Q}$-algebra isomorphisms $\mathcal{A}^{\Lpk}\cong\mathcal{A}^{\Rpk}$ and $\mathcal{A}^{\lpk}\cong\mathcal{A}^{\rpk}$. On the level of compositions, let us define $L^{r}\vDash n$ for $L\vDash n$ by 
\[
L^{r}\coloneqq\Comp([n-1]\backslash(n-\Des L)), \quad\text{where}\quad n-\Des L\coloneqq \{\,n-i:i\in\Des L\,\}.
\]
Also, consider the involution $\rho$ on $\QSym$ given by $\rho(F_{L})=F_{L^{r}}$, which like $\psi$ is a $\mathbb{Q}$-algebra automorphism \cite[Section 3.6]{Luoto2013}. It is easily verified that $\Comp\pi=L$ implies $\Comp\pi^{r}=L^{r}$, and that $\rho$ restricts to an isomorphism between $\mathcal{K}^{\Lpk}$ and $\mathcal{K}^{\Rpk}$ as well as between $\mathcal{K}^{\lpk}$ and $\mathcal{K}^{\rpk}$. Upon obtaining an analogue of Theorem \ref{t-F} for the $\Lpk$ and $\lpk$ statistics, we could then establish analogous results for $\Rpk$ and $\rpk$ using the involution $\rho$ similar to the proof of Theorem \ref{t-vals}. 

Concerning these symmetries, we also pose the following question.
\begin{question}
If two statistics $\st_{1}$ and $\st_{2}$ are $r$-equivalent or $c$-equivalent, does $\st_{1}$ being $M$-binomial imply that $\st_{2}$ is also $M$-binomial?
\end{question}

On the other hand, the statistics $\maj$, $(\des,\maj)$, and $(\val,\des)$ are not $M$-binomial \cite[Question 107]{Grinberg2018}. Are there ways that we can tell whether a descent statistic is $M$-binomial solely in terms of the combinatorics of the statistic?
\begin{problem}
Find necessary and/or sufficient conditions for a descent statistic to have the $M$-binomial property (preferably combinatorial conditions).
\end{problem}

Grinberg \cite[Section 6]{Grinberg2018a} considers four binary operations $\prec$, $\succeq$, \textarm{\belgthor}, and \textarm{\tvimadur} on $\QSym$ and defines a notion of ideal for each of these operations. We may then naturally ask whether the kernel of a descent statistic is one of these types of ideals, which turns out to have combinatorial significance. Notably, $\mathcal{K}^{\st}$ is both an $\prec$-ideal and an $\succeq$-ideal if and only if $\st$ is LR-shuffle-compatible, in which case $\mathcal{A}^{\st}$ is canonically a dendriform algebra quotient of $\QSym$. We restate the following question of Grinberg.

\begin{question}[{Grinberg \cite[Question 6.16]{Grinberg2018a}}]
Which descent statistics $\st$ have the property that $\mathcal{K}^{\st}$ is an $\prec$-ideal, $\succeq$-ideal, \textarm{\belgthor}-ideal, and/or \textarm{\tvimadur}-ideal?
\end{question}

\noindent Grinberg has some results in this direction \cite[Section 6.7]{Grinberg2018a}; for example, the kernels $\mathcal{K}^{\Des}$, $\mathcal{K}^{\des}$, $\mathcal{K}^{(\des,\maj)}$, and $\mathcal{K}^{\Epk}$ are ideals with respect to all four of these operations, whereas $\mathcal{K}^{\Lpk}$ is an ideal with respect to all except \textarm{\tvimadur} (it is a left \textarm{\tvimadur}-ideal but not a right \textarm{\tvimadur}-ideal). Hence, the statistics $\Des$, $\des$, $(\des,\maj)$, $\Epk$, and $\Lpk$ are all LR-shuffle-compatible.

Finally, we note that the notions of shuffle-compatibility and shuffle algebras have recently been extended to cyclic permutations \cite{Domagalski[20202021],Liang}. A \textit{cyclic permutation} is an equivalence class of a (linear) permutation under cyclic rotation; for example, if $\pi=21948$ then
\[
[\pi]=\{21948,82194,48219,94821,19482\}.
\]
The cyclic shuffle algebra of any cyclic shuffle-compatible descent statistic is isomorphic to a quotient of $\cQSym^{-}$ \cite[Theorem 3.5]{Liang}, the ``non-Escher subalgebra'' of the $\mathbb{Q}$-algebra $\cQSym$ of \textit{cyclic quasisymmetric functions} introduced recently by Adin, Gessel, Reiner, and Roichman~\cite{Adin2021}. The \textit{fundamental cyclic quasisymmetric functions} $F_{[L]}^{\cyc}$, indexed by ``non-Escher cyclic compositions'', form a basis of $\cQSym^{-}$, and one can then define the kernel of a cyclic descent statistic $\cst$ to be the subspace of $\cQSym^{-}$ spanned by all elements of the form $F_{[J]}^{\cyc}-F_{[K]}^{\cyc}$ where $[J]$ and $[K]$ are $\cst$-equivalent cyclic compositions. In analogy with the linear setting, the kernel of $\cst$ is an ideal of $\cQSym^{-}$ if and only if $\cst$ is cyclic shuffle-compatible.
\begin{problem}
Study kernels of cyclic descent statistics.
\end{problem}

In particular, Theorems 2.8 and 3.7 of \cite{Liang} allow one (under certain conditions) to construct the cyclic shuffle algebra of a cyclic statistic from the shuffle algebra of a related shuffle-compatible (linear) statistic; this suggests an analogous result relating kernels of cyclic descent statistics and kernels of linear descent statistics. There is also an analogue of the monomial basis in $\cQSym^{-}$, so one can investigate the $M$-binomial property in this setting.

\bigskip{}

\noindent \textbf{Acknowledgements.} The work in this paper was initiated as part of the first author's honors thesis project at Davidson College (under the supervision of the second author); we thank Heather Smith Blake, Frankie Chan, Jonad Pulaj, and Carl Yerger for serving on the first author's thesis committee and giving feedback on his work. We also thank Darij Grinberg for helpful e-mail correspondence. Lastly, we thank an anonymous referee who very generously provided simpler proofs for Theorems \ref{t-span} and \ref{t-li}, as well as a number of minor corrections and suggestions which improved the presentation of this paper.

\bibliographystyle{plain}
\addcontentsline{toc}{section}{\refname}\bibliography{bibliography}

\end{document}